\documentclass[a4paper, 12pt]{article}
\usepackage[utf8]{inputenc}
\usepackage[T1]{fontenc}
\usepackage[colorlinks=true, linkcolor=blue, citecolor=blue, urlcolor=blue]{hyperref}
\usepackage[english]{babel}
\usepackage[backend=bibtex, bibstyle=trad-abbrv]{biblatex}
\usepackage{geometry}

\usepackage{amsmath, amssymb, amsthm}
\usepackage{mathtools}
\usepackage{titling}
\usepackage{float}
\usepackage{mathrsfs}
\usepackage{dsfont}
\usepackage{upgreek}
\usepackage{tcolorbox}
\usepackage{xcolor}
\usepackage{extarrows}
\usepackage{caption}
\usepackage{array}
\usepackage{subcaption}
\usepackage{stmaryrd}
\usepackage{tikz}
\usepackage{tikz-3dplot}
\usetikzlibrary{calc, cd, patterns}
\usepackage{multicol}
\usepackage{csquotes}
\usepackage{enumitem}
\usepackage{titlesec}
\usepackage{titletoc}
\usepackage{aliascnt}
\usepackage{cleveref}
\makeatletter
\DeclareFontFamily{OMX}{MnSymbolE}{}
\DeclareSymbolFont{MnLargeSymbols}{OMX}{MnSymbolE}{m}{n}
\SetSymbolFont{MnLargeSymbols}{bold}{OMX}{MnSymbolE}{b}{n}
\DeclareFontShape{OMX}{MnSymbolE}{m}{n}{
    <-6>  MnSymbolE5
  <6-7>  MnSymbolE6
  <7-8>  MnSymbolE7
  <8-9>  MnSymbolE8
  <9-10> MnSymbolE9
  <10-12> MnSymbolE10
  <12->   MnSymbolE12
}{}
\DeclareFontShape{OMX}{MnSymbolE}{b}{n}{
    <-6>  MnSymbolE-Bold5
  <6-7>  MnSymbolE-Bold6
  <7-8>  MnSymbolE-Bold7
  <8-9>  MnSymbolE-Bold8
  <9-10> MnSymbolE-Bold9
  <10-12> MnSymbolE-Bold10
  <12->   MnSymbolE-Bold12
}{}

\let\llangle\@undefined
\let\rrangle\@undefined
\DeclareMathDelimiter{\llangle}{\mathopen}%
                    {MnLargeSymbols}{'164}{MnLargeSymbols}{'164}
\DeclareMathDelimiter{\rrangle}{\mathclose}%
                    {MnLargeSymbols}{'171}{MnLargeSymbols}{'171}
\makeatother

\DeclareMathAlphabet{\mathcal}{OMS}{zplm}{m}{n}
\DeclareFontFamily{U}{stix2bb}{\skewchar\font127 }
\DeclareFontShape{U}{stix2bb}{m}{n} {<-> stix2-mathbb}{}
\DeclareMathAlphabet{\mathbbl}{U}{stix2bb}{m}{n}

\newcommand{\KL}{\mathrm{KL}}
\newcommand{\OT}{\mathrm{OT}}
\newcommand{\supp}{\mathrm{supp}}
\newcommand{\id}{\mathrm{Id}}
\newcommand{\diag}{\mathrm{diag}}
\renewcommand{\div}{\mathrm{div}}
\newcommand{\inv}{^{-1}}

\DeclareMathOperator*{\argmin}{arg\,min}
\makeatletter
\let\olddot\Dot
\makeatother
\renewcommand{\Dot}[1]{\smash{\olddot{#1}}}

\renewcommand{\tau}{\uptau}
\newcommand{\dd}{\mathrm d}
\DeclareMathOperator{\Tr}{Tr}

\newcommand{\bR}{\mathbb R}
\newcommand{\mP}{\mathcal P}

\newtheoremstyle{sl}
{3pt}
{3pt}
{\slshape}
{}
{\bfseries}
{.}
{.5em}
{}
\theoremstyle{sl}
\newtheorem{theorem}{Theorem}[section]

\newaliascnt{lemma}{theorem}
\newaliascnt{proposition}{theorem}
\newaliascnt{corollary}{theorem}
\newaliascnt{definition}{theorem}
\newaliascnt{remark}{theorem}
\newaliascnt{example}{theorem}

\newtheorem{lemma}[lemma]{Lemma}
\aliascntresetthe{lemma}

\newtheorem{proposition}[proposition]{Proposition}
\aliascntresetthe{proposition}

\aliascntresetthe{corollary}

\newtheorem{definition}[definition]{Definition}
\aliascntresetthe{definition}

\theoremstyle{remark}
\newtheorem{remark}[remark]{Remark}
\aliascntresetthe{remark}

\aliascntresetthe{example}

\floatstyle{boxed}
\newfloat{listing}{H}{loi}
\floatname{listing}{Pseudocode}
\newlist{thmenum}{enumerate}{1}
\setlist[thmenum]{label=(\alph*), ref=(\alph*)}

\numberwithin{equation}{section}
\crefrangeformat{equation}{(#3#1#4--#5#2#6)}
\crefmultiformat{equation}{(#2#1#3)}%
{ and~(#2#1#3)}{, (#2#1#3)}{ and~(#2#1#3)}
\crefformat{equation}{(#2#1#3)}
\crefname{theorem}{Theorem}{Theorems}
\crefname{lemma}{Lemma}{Lemmas}
\crefname{proposition}{Proposition}{Propositions}
\crefname{corollary}{Corollary}{Corollaries}
\crefname{algorithm}{Algorithm}{Algorithms}
\crefname{definition}{Definition}{Definitions}
\crefname{assumption}{Assumption}{Assumptions}
\crefname{remark}{Remark}{Remarks}
\crefname{example}{Example}{Examples}
\crefname{pcode}{Pseudocode}{Pseudocodes}
\newcommand{\refthmitem}[2]{\cref{#1} \ref{#1:#2}}
\crefformat{figure}{Figure #2#1#3}
\crefformat{section}{Section #2#1#3}
\crefformat{subsection}{Section #2#1#3}
\crefformat{appendix}{Appendix #2#1#3}
\newcommand{\theaffiliations}{}
\newcommand{\affiliations}[1]{\renewcommand{\theaffiliations}{#1}}
\let\oldmaketitle\maketitle
\renewcommand{\maketitle}{
\renewcommand{\thefootnote}{\fnsymbol{footnote}}
\oldmaketitle \theaffiliations
\renewcommand{\thefootnote}{\arabic{footnote}}}
\author{Mathis Hardion\protect\footnotemark[1], Théo Lacombe\protect\footnotemark[1]}
\affiliations{\footnotetext[1]{Laboratoire d’Informatique Gaspard Monge,
Univ. Gustave Eiffel, CNRS, LIGM, F-77454
Marne-la-Vallée, France.\\Email addresses: \texttt{mathis.hardion2@univ-eiffel.fr, theo.lacombe@univ-eiffel.fr}}}
\title{The Wasserstein gradient flow of the Sinkhorn divergence between Gaussian distributions}
\date{\today}

\begin{document}
\maketitle

\begin{abstract}
    We study the Wasserstein gradient flow of the Sinkhorn divergence when both the source and the target are Gaussian distributions. 
    We prove the existence of a flow that stays in the class of Gaussian distributions, and is unique in the larger class of measures with strongly-concave and smooth log-densities. We prove that the flow globally converges toward the target measure when the source's covariance matrix is not singular, and provide counter-examples to global convergence when it is, giving a first answer to an open question raised in \cite[\S 4.2]{carlier2024displacement}. 
    When the covariance matrix of the source distribution commutes with that of the target, we derive more quantitative results that showcase exponential convergence toward the target when the source and the target share their support, but dropping to linear rates (\(O(t^{-1})\)) if the target is concentrated on a strict subspace of the source's support. 
\end{abstract}

\section{Introduction}

In this work, we study the Wasserstein gradient flow of the Sinkhorn divergence (a debiased version of entropic optimal transport) to a target measure, in the context of Gaussian distributions. This restriction allows to obtain closed form expressions, with the goal of studying the convergence of the flow to the target measure, giving a partial answer to the question raised initially in \cite[\S 4.2]{carlier2024displacement}. 

On \(\bR^d\) with its Euclidean norm \(\|\cdot\|\), denoting \(\mP_2(\bR^d)\) the set of Radon probability measures on \(\bR^d\) with finite second moment, the Sinkhorn divergence with regularization parameter \(\varepsilon > 0\) is defined for \(\mu, \nu \in \mP_2(\bR^d)\) as
\begin{equation*}
S_\varepsilon(\mu,\nu) \coloneqq \OT_\varepsilon(\mu,\nu) - \frac12\OT_\varepsilon(\mu,\mu) - \frac12\OT_\varepsilon(\nu,\nu).
\end{equation*}
Here the entropic optimal transport cost is defined by the Schrödinger problem:
\begin{align*}
\OT_\varepsilon(\mu, \nu) &\coloneqq \min_{\pi\in\Pi(\mu,\nu)}\int\|x-y\|^2\dd \pi(x,y) + \varepsilon\KL(\pi|\mu\otimes\nu)
\end{align*}
where the KL divergence is defined through
\(\KL(\pi|\gamma) \coloneqq \int\log\left(\frac{\dd \pi}{\dd \gamma}\right)\dd \pi\) if \(\pi \ll \gamma\) and \(+\infty\) otherwise, and \(\Pi(\mu,\nu)\) is the set of transport plans between \(\mu\) and \(\nu\), i.e.~probability measures on \(\bR^d\times\bR^d\) with marginals \((\mu, \nu)\). This problem has an equivalent dual formulation:
\begin{equation*}
\OT_\varepsilon(\mu, \nu)=\max_{f,g}\int f\dd \mu + \int g\dd \nu - \varepsilon\int(\exp(\tfrac1\varepsilon(f\oplus g - c))-1)\dd (\mu\otimes\nu).
\end{equation*}
where the maximum is taken over continuous bounded functions, \(f\oplus g \colon (x,y) \mapsto f(x) + g(y)\) and we denote for conciseness \(c:(x,y)\mapsto \|x-y\|^2\). 
This dual problem admits maximizers denoted \((f_{\mu,\nu}, g_{\mu,\nu})\) and called Schrödinger potentials. 
They are unique up to constant shifts \((f_{\mu,\nu} + \lambda, g_{\mu,\nu} - \lambda)\) for \(\lambda\in\bR\). 
When \(\mu=\nu\), one can choose \(\lambda\) such that \(f_{\mu,\mu} = g_{\mu,\mu}\), a choice that we will denote by \( f_\mu \) to alleviate notation. At optimality, the cost is given by \(\OT_\varepsilon(\mu,\nu) = \int f_{\mu,\nu}\dd\mu + \int g_{\mu,\nu}\dd\nu\).

The Wasserstein distance corresponds to the unregularized optimal transport problem, i.e.~it is defined as \(W_2 = \sqrt{\OT_0}\). It induces a differential structure on \(\mP_2(\bR^d)\), from which a notion of gradient flow may be derived \cite{ambrosio2008gradient}. Generally speaking, one can expect the Wasserstein gradient flow of a sufficiently regular functional \(F:\mP_2(\bR^d) \to \bR\) to be written as a solution \((\mu_t)_t\) of the continuity equation
\begin{equation} \label{eq:CE-WGF-nabladeltaF}
\dot \mu_t = \div\left(\mu_t\nabla\frac{\delta F}{\delta\mu_t}\right)
\end{equation}
where \(\frac{\delta F}{\delta \mu}\) denotes the first variation of \(F\) at \(\mu\in \mP_2(\bR^d)\), i.e.~the function from \(\bR^d\) to \(\bR\) such that for any signed measure \(\sigma = \nu - \mu\) with \(\nu \in \mP_2(\bR^d)\), \(\frac{\dd}{\dd t}\big|_{t=0} F(\mu + t\sigma) = \int \frac{\delta F}{\delta \mu}\dd \sigma\).

In this work, we thus consider the functional \(S_\varepsilon(\cdot, \mu_\star)\) for a prescribed target Gaussian measure \(\mu_\star\), and study its gradient flow in the Wasserstein geometry for a given initial Gaussian measure \(\mu_0\). Under assumptions of subgaussianity (and in particular for compactly supported measures), the first variation of \(S_\varepsilon(\cdot, \mu_\star)\) at \(\mu\) is given by \(f_{\mu,\mu_\star} - f_{\mu}\) \cite{janati2020debiased, feydy2019interpolating}. Thus, we can expect the gradient flow we are interested in to be a solution of the continuity equation (which we dub SWGF for Sinkhorn Wasserstein Gradient Flow)
\begin{equation} \label{eq:SWGF}
\dot \mu_t = \div(\mu_t \nabla(f_{\mu_t,\mu_\star} - f_{\mu_t})). \tag{SWGF}
\end{equation}
In the setting of compactly-supported measures, it is proven in \cite[Section 4.2]{carlier2024displacement} that this PDE indeed corresponds to the gradient flow of \(S_\varepsilon(\cdot, \mu_\star)\) in the sense of \cite{ambrosio2008gradient}, however, to the extent of our knowledge this result has not been rigorously proved for measures with unbounded support.

Since \(S_\varepsilon(\mu, \mu_\star) \geq 0\) with equality if and only if \(\mu = \mu_\star\) \cite{feydy2019interpolating}, and that the PDE \eqref{eq:SWGF} should be a gradient flow i.e.~a curve of steepest descent of the functional \(S_\varepsilon(\cdot, \mu_\star)\) in some sense, it is natural to ask whether \(\mu_t\) converges toward the target \(\mu_\star\) as \(t\to+\infty\), a question initially raised by \cite{carlier2024displacement}.
Our first partial answer to this question is given by the following summary of \cref{thm:µflow}, \cref{prop:subdiff-Seps-G}, \cref{thm:covflow} and \cref{prop:eigflow} below.
\begin{theorem}\label{thm:cv-criterion}
Let \(\mu_0, \mu_\star\) be Gaussian measures, and \(\supp(\mu_0)\), \(\supp(\mu_\star)\) their respective supports. There exists a unique solution \((\mu_t)_t\) of \eqref{eq:SWGF} which stays Gaussian, it is a Wasserstein gradient flow of $S_\varepsilon(\cdot, \mu_\star)$ in the sense of \cite{ambrosio2008gradient}, and
\begin{thmenum}
\item If \(\mu_0\) is non-singular, then \(\mu_t\) converges in Wasserstein distance to \(\mu_\star\) as \(t\to+\infty\). \label{thm:cv-criterion:ns}
\item If \(\mu_0\) is singular and not \(\mu_\star\), then \(\mu_t\) converges in Wasserstein distance to a measure \emph{different} from \(\mu_\star\). \label{thm:cv-criterion:s2ns}
\item If the covariances of \(\mu_0\) and \(\mu_\star\) commute, then \(\mu_t\) converges to \(\mu_\star\) if and only if \(\supp(\mu_\star) \subseteq \supp(\mu_0)\). In that case, the functional converges to 0 with an exponential rate if \(\supp(\mu_0) = \supp(\mu_\star)\), and only a sublinear rate when \(\supp(\mu_\star)\) is of strictly lower dimension than \(\supp(\mu_0)\). \label{thm:cv-criterion:commute}
\end{thmenum}
\end{theorem}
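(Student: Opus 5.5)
The plan is to reduce everything to a finite-dimensional ODE on means and covariances, using the closed forms of entropic OT between Gaussians. For \(\mu = \mathcal N(m,\Sigma)\) and \(\mu_\star=\mathcal N(m_\star,\Sigma_\star)\), the Schrödinger potentials are quadratic. Write \(f_{\mu,\mu_\star}(x) = \langle x, A x\rangle + \langle b, x\rangle + \text{const}\), with \(A\) given by the explicit Janati et al./Mallasto et al. formula involving \(D_\varepsilon = (4\Sigma^{1/2}\Sigma_\star\Sigma^{1/2}+\tfrac{\varepsilon^2}{4}\id)^{1/2}\). Similarly \(f_\mu\) is quadratic. Hence \(\nabla(f_{\mu,\mu_\star}-f_\mu)\) is affine, \(x\mapsto M(\Sigma)(x-m) + 2(m-m_\star)\) up to normalisation. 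The continuity equation with an affine velocity field preserves Gaussianity.

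Substituting a Gaussian ansatz gives \(\dot m = -c(m-m_\star)\), so the mean converges exponentially. The covariance satisfies \(\dot\Sigma = -(M(\Sigma)\Sigma+\Sigma M(\Sigma))\) with \(M\) symmetric and smooth (indeed locally Lipschitz) on the cone of positive semidefinite matrices, since the \(\varepsilon^2/4\) term keeps all square roots away from zero. Cauchy–Lipschitz gives local existence and uniqueness of the Gaussian solution. Global existence follows from the a priori bound that \(S_\varepsilon(\mu_t,\mu_\star)\) is nonincreasing along the flow, which bounds \(\Sigma_t\). Uniqueness in the class of strongly log-concave smooth densities, and identification with the Ambrosio–Gigli–Savaré gradient flow, I would take from \cref{thm:µflow} and \cref{prop:subdiff-Seps-G}. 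These show the velocity is the minimal subdifferential element and use a convexity-type estimate along the flow.

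For (a) and (b), note that \(\dot\Sigma = -(M\Sigma+\Sigma M)\) preserves \(\ker\Sigma\) and the range of \(\Sigma\): if \(\Sigma v=0\) then \(\frac{\dd}{\dd t}\langle v,\Sigma v\rangle=0\), and a Grönwall argument on \(\Sigma_t\) restricted to the kernel shows it stays singular. Consequently a singular \(\mu_0\) can never reach a \(\mu_\star\) whose support is not contained in \(\supp\mu_0\). Statement (b), that the limit exists and differs from \(\mu_\star\) in general, would come from \cref{thm:covflow}. There I would use \(S_\varepsilon\) as a Lyapunov function, bounded trajectories, and LaSalle's invariance principle. The stationary points of the ODE are exactly the \(\Sigma\) with \(M(\Sigma)\Sigma+\Sigma M(\Sigma)=0\), and in the nonsingular case this forces \(M=0\), hence \(\Sigma=\Sigma_\star\) by injectivity of the map \(\Sigma\mapsto\) potential. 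The main obstacle is showing that a nonsingular \(\Sigma_t\) does not degenerate toward the boundary of the cone as \(t\to\infty\). I would try to get a lower bound on \(\lambda_{\min}(\Sigma_t)\) by examining \(\frac{\dd}{\dd t}\log\det\Sigma_t = -2\Tr M\) near singular matrices, checking that \(M\) is negative in directions where \(\Sigma\) is small relative to \(\Sigma_\star\), so that the flow pushes away from the boundary.

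For (c), when \(\Sigma_0\) and \(\Sigma_\star\) commute, all matrices stay simultaneously diagonalisable. The flow then decouples into scalar ODEs \(\dot\sigma_i = -2 m(\sigma_i,\sigma_{\star,i})\sigma_i\) for the eigenvalues, as in \cref{prop:eigflow}. A phase-line analysis handles each coordinate. If \(\sigma_i(0)=0\), the eigenvalue stays zero, so convergence holds iff \(\sigma_{\star,i}=0\); this gives the support criterion. If \(\sigma_i(0),\sigma_{\star,i}>0\), linearising at the fixed point gives a negative derivative, hence exponential decay. If \(\sigma_{\star,i}=0<\sigma_i(0)\), expanding \(m\) near \(0\) gives \(\dot\sigma\approx -c\sigma^2\), hence \(\sigma_i(t)\sim 1/(ct)\). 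Transferring these rates to \(S_\varepsilon\) via its closed-form Gaussian expression then yields exponential versus sublinear decay of the functional.
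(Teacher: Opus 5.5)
Your Gaussian well-posedness argument (moment ODE plus Cauchy--Lipschitz) and your treatment of (c) are fine. The argument for (a) has a genuine gap. Your plan is to keep \(\Sigma_t\) away from the boundary of \(\mathbb S^d_+\) and then use that the only nonsingular stationary point is \(\Sigma_\star\). But (a) makes no assumption on \(\mu_\star\). When \(\Sigma_\star\) is singular (right panel of \cref{fig:ellipses-non-singular}, or \cref{prop:eigflow} with \(\lambda_i^\star=0<\lambda_i^0\), where \(\lambda_i(t)\to0\) like \(1/t\)), the flow \emph{must} approach the boundary. So no lower bound on \(\lambda_{\min}(\Sigma_t)\) can hold, and the limit lies among the \emph{singular} critical points, which your proposal never describes.

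The missing ingredient is the paper's \cref{lem:critpoints}. The condition \(G_\Sigma\Sigma G_\Sigma=0\) forces \(G_\Sigma\Sigma=\Sigma G_\Sigma=0\), hence \(\Sigma\) commutes with \(\Sigma_\star\) and has eigenvalues \(\lambda_i\in\{0,\lambda_i^\star\}\). At any such point other than \(\Sigma_\star\) one has \(\Tr(G_\Sigma)<0\). So if \(\Sigma_t\to\Sigma_\infty\neq\Sigma_\star\), then \(\frac{\dd}{\dd t}\log\det\Sigma_t=-2\Tr(G_{\Sigma_t})>0\) for large \(t\), contradicting \(\det\Sigma_\infty=0\).

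A log-det barrier ``near singular matrices'' also fails for nonsingular targets. Take \(d=2\), \(\Sigma_\star=\id\), \(\Sigma=\diag(L,\delta)\) with \(L\to\infty\), \(\delta\to0\): then \(\Tr(G_\Sigma)\to2-1/\tilde\varepsilon\), which is positive when \(\tilde\varepsilon>\frac12\). When \(\Sigma_\star\succ0\), your idea does work with \(\lambda_{\min}\) in place of \(\log\det\). For a unit eigenvector \(v\) of \(\lambda_{\min}(\Sigma_t)\), one has \(\frac{\dd}{\dd t}\lambda_{\min}(\Sigma_t)=-2\lambda_{\min}(\Sigma_t)\langle v,G_{\Sigma_t}v\rangle\). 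Moreover \(\langle v,G_{\Sigma_t}v\rangle\le\lambda_{\min}(\Sigma_t)/\tilde\varepsilon-2\lambda_{\min}(\Sigma_\star)/(\sqrt{C\lambda_{\max}(\Sigma_\star)+\tilde\varepsilon^2}+\tilde\varepsilon)\), with \(C\) a bound on \(\|\Sigma_t\|_{\mathrm{op}}\). This gives a uniform lower bound and bypasses both \cref{lem:critpoints} and the Lojasiewicz step, but it cannot survive a singular target.

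For (b), read it as \(\mu_0\) singular and \(\mu_\star\) nonsingular. Read literally as ``\(\mu_0\neq\mu_\star\)'' it would contradict (c), e.g.\ \(\Sigma_0=\diag(1,0)\), \(\Sigma_\star=\diag(2,0)\). The fact you need is true, but your justification is wrong. The flow does not preserve \(\ker\Sigma_t\) or its range. One has \(\Sigma_t=\Phi_t\Sigma_0\Phi_t^T\) with \(\dot\Phi_t=-G_{\Sigma_t}\Phi_t\), \(\Phi_0=\id\), so only the rank is preserved while the subspaces rotate. For fixed \(v\in\ker\Sigma_0\), \(q(t)=\langle v,\Sigma_tv\rangle\) only satisfies \(|\dot q|\le C\sqrt q\), so Gronwall gives nothing. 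Your claim that a singular \(\mu_0\) cannot reach a target whose support is not contained in \(\supp\mu_0\) is false in the non-commuting case; the right panel of \cref{fig:ellipses-singular} shows a source on one line converging to a target on another. Use instead \(\det\Sigma_t=\exp(-2\int_0^t\Tr(G_{\Sigma_s})\dd s)\det\Sigma_0\). Then \(\Sigma_t\) stays singular, and any limit is singular, hence different from \(\Sigma_\star\succ0\).

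Finally, (b) asserts that \(\mu_t\) \emph{converges}. LaSalle only places the \(\omega\)-limit set in a connected subset of the critical set. That set contains continua when \(\Sigma_\star\) has repeated eigenvalues: for \(\Sigma_\star=\id\), every \(vv^T\) with \(\|v\|=1\) is critical. The paper obtains a single limit by a Lojasiewicz argument. It uses that \(\mathcal B_\varepsilon(\cdot,\Sigma_\star)\) is analytic and lifts the flow to a Euclidean gradient flow through \(\Sigma=X^TX\) (\cref{lem:BW-flow-cv}).
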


\paragraph{Outline and Contributions.} 
After reviewing the basic necessary tools in \cref{sec:background}, we show in \cref{sec:wellposed:existence} that the restriction of \(S_\varepsilon(\cdot, \mu_\star)\) to Gaussian distributions is convex along generalized geodesics (with a negative convexity constant), yielding in particular the existence of a curve satisfying \eqref{eq:SWGF} and its uniqueness among Gaussian curves. We prove in \cref{sec:wellposed:WGFcharacterization} that this curve is indeed a Wasserstein gradient flow of the original functional in the whole space $\mP_2(\bR^d)$, without the need to restrict to Gaussian distributions.
Using an Evolution Variational Inequality (EVI) argument, we obtain in \cref{sec:wellposed:EVI} the uniqueness of this curve in the larger set of measures with strongly concave and smooth log-densities. 
In \cref{sec:equation-cv}, leveraging the explicit formulation of the Sinkhorn divergence (and its gradient) for Gaussian distributions, we prove global convergence toward \(\mu_\star\) of the flow when the source measure has a non-singular covariance matrix, and otherwise give a precise characterization of the limit points. Assuming furthermore that the covariance matrices of the source and target distributions commute, we derive in \cref{sec:equation-cv:commute} an evolution equation of the eigenvalues of the covariances throughout the flow, yielding exponential convergence rates when both the source and target have the same support, and only sublinear ones when the target is concentrated on a strict subspace of the source's support.
Eventually, in \cref{sec:numerics}, we show that an explicit Euler scheme can be used to faithfully approximate the flow, yielding a straightforward way to run simulations. 
We therefore numerically confirm the tightness of the theoretical rates derived in the previous section.

\subsection{Related work} 
\paragraph{Optimal transport and its entropic regularization.} The Wasserstein distance is a tool to compare probability distributions in a way that is aware of the geometry of the underlying space, with many applications in data science, probability and statistics, modeling, economics and more. 
We refer the reader to the textbooks and surveys \cite{villani2009optimal, santambrogio2015optimal, peyré2019computational, chewi2025statistical, ambrosio2013users, rachev1998mass1, rachev1998mass2} and references therein for an overview of the breadth of this theory and its applications. 
One of the main shortcomings of the Wasserstein distance is its high computational cost, and a popular remedy is entropic regularization.
The Entropic Optimal Transport (EOT) problem can be traced back to Schrödinger \cite{schrodinger1932theorie}, and has been popularized by \cite{cuturi2013sinkhorn} which showed it could be solved efficiently on GPU thanks to parallelization, making it a great approximation of the Wasserstein distance for large-scale applications in machine learning. The entropic optimal transport cost was found to have extra desirable properties, such as better statistical properties \cite{genevay2019sample, mena2019statistical} and smoothness with computationally accessible gradients \cite{genevay2018learning}. We refer to \cite{leonard14survey, nutz2022introduction} for more on the theory of EOT and to \cite{peyré2019computational} for computational aspects. The main disadvantage of this regularized cost is that in contrast to unregularized OT, one has \(\OT_\varepsilon(\mu,\mu) > 0\) and---arguably worse---the minimizer of the functional \(\mu \mapsto \OT_\varepsilon(\mu,\mu_\star)\) is not \(\mu_\star\), but instead a ``shrunk'' version of it. The Sinkhorn divergence was thus introduced in \cite{ramdas2017wasserstein,genevay2018learning} (see also \cite{salimans2018improving}) to overcome this peculiar behavior, by adding the negative self-entropic term \(-\frac12\OT_\varepsilon(\mu, \mu)\) which behaves like a repulsive interaction term when minimizing the functional, with the goal of compensating the shrinking effect. And indeed, the work \cite{feydy2019interpolating} has proven that \(S_\varepsilon\) is a positive definite loss, which is also smooth and metrizes convergence in law/weak-\textasteriskcentered{} convergence, giving it a firm theoretical ground to motivate its use in applications.

\paragraph{Wasserstein gradient flows.} Gradient flows in Euclidean space are ODEs written as \(\dot x_t = -\nabla F(x_t)\) for some regular enough \(F\), and are a natural way to minimize such functionals (the discretization of a gradient flow is gradient descent). It is possible to extend this notion to metric spaces, and in particular to the Wasserstein space \( (\mP_2(\bR^d), W_2 ) \). The pioneering work \cite{jordan1998variational} shows with a variational scheme (now called JKO in reference to the authors) that the Fokker-Planck equation can be seen as a gradient flow of a free energy (that is, the sum of a potential energy \(\mu\mapsto \int V\dd \mu\) and the negative Boltzmann entropy) with respect to the Wasserstein distance. This has sparked an interest in the Wasserstein geometry to describe PDEs, including the porous medium equation studied in \cite{otto2001geometry} which developed the ``Otto calculus'', a formal Riemannian structure on the Wasserstein space, later developed into a more rigorous theory in \cite{ambrosio2008gradient}. Since then, Wasserstein gradient flows have gained traction in applied fields as well, as a modeling tool \cite{maury2010macroscopic, bunne2022proximal}, a sampling method \cite{sun2023discrete}, or to study learning algorithms \cite{chizat2018global} and many more. We refer to the textbooks \cite{ambrosio2008gradient, santambrogio2015optimal, peyré2019computational, chewi2025statistical} and surveys \cite{santambrogio2017euclidean,ambrosio2013users} and references therein for an overview of this theory and its applications.

\paragraph{The gradient flow of \(S_\varepsilon\).} The idea of considering the Wasserstein gradient flow of the Sinkhorn divergence is not new. The seminal paper \cite{feydy2019interpolating} includes numerical experiments on this flow (on particles), which is mentioned as a non-parametric data fitting model. Similarly, \cite{zhu2024neural} implements neural networks to approximate the flow, and uses it as a generative model. 
From a more theoretical standpoint, the well-posedness of the flow on compactly-supported measures was studied in \cite{carlier2024displacement}, where the question of the convergence to the target measure was raised. The authors explicitly state that their results rely heavily on this compactness assumption, and to the best of our knowledge our study of this problem for Gaussian distributions is the first to obtain results without compactness. 

\paragraph{Distinction from Sinkhorn geodesics, barycenters, and Schrödinger bridges.}
The flow that we study, under the conditions explained in \cref{thm:cv-criterion}, can interpolate between source and target measures. There are other ways to interpolate between measures using entropic optimal transport, such as Sinkhorn barycenters \cite{janati2020debiased}, Sinkhorn geodesics \cite{lavenant2025riemannian}, and Schrödinger bridges \cite{leonard14survey}. When \(\varepsilon=0\), all those notions coincide, including the gradient flow up to a time reparametrization \(t\mapsto 1-e^{-2t}\). However for \(\varepsilon >0\), there is a priori no reason they do. Firstly, the Schrödinger bridge with positive temperature is not constant when its extremities are the same, in opposition to the other curves. Secondly, the Sinkhorn barycenters and Sinkhorn geodesics are built solely on \(S_\varepsilon\) and its induced geometry, whereas our gradient flow incorporates the Wasserstein distance as underlying geometry and \(S_\varepsilon\) as functional. Finally, there are cases where the gradient flow does not converge to the target, whereas all the other curves can interpolate between any pair of measures. On the other hand, the PDE \eqref{eq:SWGF} has the benefit of being easily tractable (e.g.~for Gaussian distributions, or for discrete measures) from a computational standpoint, while the aforementioned alternatives yield way more challenging optimization problems.

\paragraph{Optimal transport and optimization on the Bures--Wasserstein space.}
Optimal transport between Gaussian measures is simpler thanks to closed form expressions. In the unregularized case \(\varepsilon = 0\) we recover the Bures--Wasserstein distance, which induces a Riemannian structure on the space of covariance matrices \cite{bhatia2019bures, takatsu2011wasserstein, malago2018wasserstein}. The Sinkhorn divergence too admits a closed form expression in this case \cite{janati2020entropic, mallasto2022entropyregularized}, see also \cref{sec:background:preliminary-gaussian}. While working in the Bures--Wasserstein space (i.e.~the space of Gaussian measures endowed with \(W_2\)) may seem restrictive, there is a growing body of work studying optimization and gradient flows on that space, notably for variational inference \cite{lambert2022variational, diao2023forwardbackward, diao2023proximal, yi2023bridging}, positive semidefinite optimization \cite{han2021riemannian,maunu2023bureswasserstein}, and robotics \cite{ziesche2023wasserstein}.

\subsection{Notations and setting}
We work in the Wasserstein space \((\mP_2(\bR^d), W_2)\) of Radon probability measures with finite second moment endowed with the Wasserstein distance. The inner product of \(\bR^d\) is denoted \(\langle\cdot,\cdot\rangle\). The set of optimal transport plans between \(\mu, \nu\in\mP_2(\bR^d)\) is denoted \(\Pi_{\mathrm o}(\mu, \nu) \coloneqq \argmin_{\pi\in\Pi(\mu,\nu)}\int\|x-y\|^2\dd \pi(x,y)\). We denote \(\mathbf p_i:(\bR^d)^2 \to \bR^d\) the projection onto the \(i\)th component of the product space for \(i\in\{1, 2\}\), i.e.~\(\mathbf p_i(x_1, x_2) = x_i\) for \((x_1, x_2) \in (\bR^d)^2\). Similarly, on \((\bR^d)^3\) we define \(\mathbf p_{i,j}(x_1, x_2, x_3) = (x_i, x_j)\). We denote \(L_\mu^2(\bR^d)\) the set of vector fields \(\bR^d \to \bR^d\) with integrable square norm with respect to \(\mu\in\mP_2(\bR^d)\), quotiented by equality \(\mu\)-almost everywhere. It is endowed with its usual inner product \(\langle\cdot, \cdot\rangle_{L^2_\mu(\bR^d)}\) and corresponding norm \(\|\cdot\|_{L^2_\mu(\bR^d)}\). The set of symmetric, positive semi-definite (resp. positive definite) \(d\times d\) matrices is written \(\mathbb S^d_{+}\) (resp. \(\mathbb S^d_{++}\)), and the operator norm (equal to the spectral radius for symmetric matrices) is denoted by \(\|\cdot\|_{\mathrm{op}}\) . For \(m \in \bR^d\), \(\Sigma\in\mathbb S_+^d\) we denote \(\mathcal N(m, \Sigma)\) the (possibly singular) Gaussian of mean \(m\) and covariance \(\Sigma\), i.e.~the measure of characteristic function given by \(s\mapsto \exp(i\langle s, m\rangle - \frac12\langle s, \Sigma s\rangle)\). The largest eigenvalue of a covariance matrix \( \Sigma \) is denoted by \( \lambda_{\max}(\Sigma)  \in \bR_+ \), and the smallest is written \(\lambda_{\min}(\Sigma)\).

\section{Background and preliminary results}\label{sec:background}

\subsection{Wasserstein Gradient Flows}\label{sec:background:WGFs}
We now give a brief overview of the tools developed in \cite{ambrosio2008gradient}, see also the surveys \cite{santambrogio2017euclidean, ambrosio2013users}. This technical formalism yields powerful existence results for gradient flows that we will make use of. Note also that while \eqref{eq:CE-WGF-nabladeltaF} is often used directly as a definition of gradient flow, if one were to use it as such, one would first need to prove that $\frac{\delta S_\varepsilon(\cdot,\mu_\star)}{\delta\mu} = f_{\mu,\mu_\star} - f_\mu$ for \emph{all} \( \mu \in \mP_2(\bR^d)\),

which would prove challenging from the lack of regularity of this space (the most general version of this result to the best of our knowledge holds when $\mu,\mu_\star$ are subgaussian measures \cite{janati2020debiased}).

The Wasserstein subdifferential of a functional \(F:\mP_2(\bR^d) \to \bR\cup\{+\infty\}\) is defined as follows: we write \(\xi \in \partial_W F(\mu)\) if for any \(\nu\) there exists \(\pi\in\Pi_{\mathrm o}(\mu,\nu)\) such that
\begin{equation}\label{eq:defsubdiff}
F(\nu) - F(\mu) \geq \int\langle\xi(x), y-x\rangle \dd \pi(x, y) + o(W_2(\mu,\nu)).
\end{equation}
This definition may be seen as an analogue of the Fréchet subdifferential adapted to the Wasserstein space.
A Wasserstein gradient flow in the sense of \cite{ambrosio2008gradient} is then defined as a curve \((\mu_t)_t\) satisfying a continuity equation \(\dot \mu_t + \div(\mu_tv_t)=0\), where \(v_t\in\partial_WF(\mu_t)\) for a.e.~\(t\).  More details on the pseudo-Riemannian structure of the Wasserstein space motivating this definition are provided in \cref{sec:pseudoriemW2}.
Within the subdifferential, the element of minimal \(L^2\) norm, denoted \(\partial_W^\circ F(\mu) \coloneqq \argmin\{\|\xi\|_{L^2_\mu(\bR^d)} \: : \: \xi\in \partial_WF(\mu)\}\) plays a distinguished role. Indeed, gradient flows satisfy a \emph{minimal selection principle} \cite[Theorem 11.1.3]{ambrosio2008gradient}, meaning that if a gradient flow exists then it is a solution of \(\dot\mu_t = \div(\mu_t \partial_W^\circ F(\mu_t))\). In most known cases, we have \(\partial_WF(\mu) = \{\nabla \frac{\delta F}{\delta\mu}\}\), which explains why \eqref{eq:CE-WGF-nabladeltaF} is a popular alternative definition of Wasserstein gradient flow.

A key assumption of \cite{ambrosio2008gradient} guaranteeing existence and uniqueness of a flow is the \(\lambda\)-convexity along geodesics of the functional \(F\), namely: for all \(\mu, \nu \in \mP_2(\bR^d)\), there exists a geodesic \((\mu_t)_{t\in[0, 1]}\) between \(\mu\) and \(\nu\) such that for all \(t\),
\begin{equation*}
F(\mu_t) \leq (1-t)F(\mu)  +tF(\nu) - \frac\lambda2 t(1-t)W_2^2(\mu, \nu).
\end{equation*}
We recall that Wasserstein geodesics are written \(\mu_t = ((1-t)\mathbf p_1 + t\mathbf p_2)_\#\pi\) for \(\pi\in\Pi_{\mathrm o}(\mu, \nu)\). There is also the stronger notion of \(\lambda\)-convexity along \emph{generalized} geodesics: for \(\omega\in\mP_2(\bR^d)\), a generalized geodesic between \(\mu\) and \(\nu\) of base \(\omega\) is given by \(\mu_t = ((1-t)\mathbf p_{1, 2} + t \mathbf p_{1, 3})_\#\bar \pi\) for \(\bar \pi \in \mP((\bR^d)^3)\) with marginals \((\omega, \mu, \nu)\) and \((\mathbf p_{1, 2})_\#\bar\pi \in \Pi_{\mathrm o}(\omega, \mu)\), \((\mathbf p_{1, 3})_\#\bar\pi \in \Pi_{\mathrm o}(\omega, \nu)\). A functional \(F\) is said to be \(\lambda\)-convex along generalized geodesics if for any \((\omega, \mu, \nu)\), there exists a generalized geodesic \((\mu_t)_{t\in[0, 1]}\) of base \(\omega\) between \(\mu\) and \(\nu\) such that
\begin{equation*}
F(\mu_t) \leq (1-t)F(\mu)  +tF(\nu) - \frac\lambda2 t(1-t)W_{\bar\pi}^2(\mu, \nu)
\end{equation*}
where \(W_{\bar\pi}^2(\mu, \nu) \coloneqq \int\|y-z\|^2\dd\bar\pi(x, y, z)\).
Under either of these assumptions, we also obtain an Evolution Variational Inequality (EVI) for gradient flows, that is if \((\mu_t)_t\) is a Wasserstein gradient flow of \(F\) then
\begin{equation}\label{eq:EVIdef}
\forall \nu \in \mP_2(\bR^d), \frac{\dd}{\dd t}\frac12W_2^2(\mu_t, \nu) \leq F(\nu) - F(\mu_t) - \frac\lambda2 W_2^2(\mu_t, \nu)
\end{equation}
which yields uniqueness of solutions for a fixed initial measure. Indeed, for two curves \((\mu_t^1)_t\) and \((\mu_t^2)_t\) satisfying the EVI, writing \eqref{eq:EVIdef} with \((\mu_t, \nu) = (\mu_t^1, \mu_t^2)\) then \((\mu_t, \nu) = (\mu_t^2, \mu_t^1)\) and summing the two inequalities gives
\begin{equation*}
\frac{\dd}{\dd t}\frac12 W_2^2(\mu_t^1, \mu_t^2) \leq -\lambda W_2^2(\mu_t^1, \mu_t^2).
\end{equation*}
By Gronwall's lemma, we deduce \(W_2^2(\mu_t^1, \mu_t^2)\leq e^{-2\lambda t}W_2^2(\mu_0^1, \mu_0^2)\). In particular, when \(\mu_0^1=\mu_0^2\) then \(\mu_t^1 = \mu_t^2\) for a.e.~\(t\).

We obtain such an inequality in \cref{sec:wellposed:EVI} to obtain uniqueness on the class of measures with strongly concave and smooth log-densities.

\subsection{Preliminary results on EOT between Gaussian measures}\label{sec:background:preliminary-gaussian}
We now introduce the results we need in the specific case of Gaussian measures, where the Sinkhorn divergence and the Schrödinger potentials have an explicit formula as a function of the means and covariances \cite{janati2020entropic}. We slightly extend these results to the case of singular covariance matrices, and also the expression of the Schrödinger potentials for uncentered measures.
\begin{proposition}\label{thm:Seps-gauss}
Let \(\Sigma, \Gamma \in\mathbb S^d_+\), \(m, n\in\bR^d\), and \(\mu = \mathcal N(m, \Sigma)\), \(\nu = \mathcal N(n, \Gamma)\). We have
\begin{equation*}
S_\varepsilon(\mu, \nu) = \|m-n\|^2 + \mathcal B_\varepsilon(\Sigma, \Gamma)
\end{equation*}
where
\begin{align*}
\mathcal B_\varepsilon(\Sigma, \Gamma) &\coloneqq B_\varepsilon(\Sigma, \Gamma) - \frac12 B_\varepsilon(\Sigma, \Sigma) - \frac12 B_\varepsilon(\Gamma, \Gamma),\\
B_\varepsilon(\Sigma, \Gamma) &\coloneqq \Tr(\Sigma) + \Tr(\Gamma) +\tfrac\varepsilon2\log\det(D_\varepsilon +\tfrac\varepsilon2\id)- \Tr(D_\varepsilon),\\
D_\varepsilon &\coloneqq (4\Sigma^{\frac12}\Gamma\Sigma^{\frac12}+\tfrac{\varepsilon^2}4\id)^{\frac12}.
\end{align*}
The Schrödinger potentials are given up to constant shifts by \(f_{\mu,\nu}(x) = \langle x-m, F_\Sigma^\Gamma(x-m)\rangle + 2\langle m-n, x\rangle\) and \(g_{\mu,\nu}(x) = \langle x-m, F_\Gamma^\Sigma(x-m)\rangle + 2\langle n-m, x\rangle - \|m-n\|^2\) where, denoting \(\tilde \varepsilon \coloneqq \frac\varepsilon4\),
\begin{equation*}
F_\Sigma^\Gamma \coloneqq \id - \Gamma ^{\frac12}((\Gamma^{\frac12} \Sigma\Gamma^{\frac12} + \tilde\varepsilon^2\id)^{\frac12}+\tilde\varepsilon\id)\inv\Gamma ^{\frac12}.
\end{equation*}
Thus \(\nabla(f_{\mu, \nu} - f_{\mu})(x) = G_{\Sigma}^{\Gamma}(x-m) + 2(m-n)\) with
\begin{equation*}
G_{\Sigma}^{\Gamma} \coloneqq 2(F_\Sigma^\Gamma-F_\Sigma^\Sigma) =2\Sigma((\Sigma^2 + \tilde\varepsilon^2\id)^{\frac12}+\tilde{\varepsilon}\id)\inv - 2\Gamma ^{\frac12}((\Gamma^{\frac12} \Sigma\Gamma^{\frac12} + \tilde\varepsilon^2\id)^{\frac12}+\tilde\varepsilon\id)\inv\Gamma ^{\frac12},
\end{equation*}
and finally \(\nabla_1\mathcal B_\varepsilon(\Sigma, \Gamma) = \frac12G_{\Sigma}^{\Gamma}\).
\end{proposition}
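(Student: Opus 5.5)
The plan is to reduce everything to the known centered, non-singular case from \cite{janati2020entropic}, and then handle means and singular covariances separately.

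\textbf{Means.} Let \(T_a x = x+a\). For any \(\pi\in\Pi(\mu,\nu)\), centering both marginals gives
\[
\int\|x-y\|^2\dd\pi = \|m-n\|^2 + \int\|\bar x-\bar y\|^2\dd\bar\pi ,
\]
where \(\bar\pi\) is the image of \(\pi\) under \((x,y)\mapsto(x-m,\,y-n)\); the cross term vanishes because \(\bar x\) and \(\bar y\) have mean zero. The KL term is invariant under this bijection of product spaces. Hence
\[
\OT_\varepsilon(\mu,\nu)=\|m-n\|^2+\OT_\varepsilon(\bar\mu,\bar\nu),
\]
and the self-terms are unaffected, which yields \(S_\varepsilon(\mu,\nu)=\|m-n\|^2+S_\varepsilon(\bar\mu,\bar\nu)\). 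The potentials follow in the same way. If \((\bar f,\bar g)\) are potentials for the centered problem, define \(f(x)=\bar f(x-m)+2\langle m-n,x\rangle+c_1\) and \(g(y)=\bar g(y-n)+2\langle n-m,y\rangle+c_2\). One checks directly that these satisfy the Schrödinger system
\[
f(x)=-\varepsilon\log\int e^{(g(y)-\|x-y\|^2)/\varepsilon}\dd\nu(y),
\]
since \(\|x-y\|^2=\|\bar x-\bar y\|^2+2\langle \bar x-\bar y,m-n\rangle+\|m-n\|^2\). Choosing the constants appropriately gives the stated formulas; the \(g\) formula is written with \(x-m\) in place of \(y-n\), which differs only by a linear term and a constant, to be checked by reindexing.

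\textbf{Centered non-singular case.} Here I invoke \cite{janati2020entropic}. The quadratic form of \(f_{\mu,\nu}\) is \(F_\Sigma^\Gamma\) in the equivalent form given there, and \(B_\varepsilon\) has the stated formula. To verify \(F_\Sigma^\Gamma\) independently, I plug the quadratic ansatz into the Schrödinger system and compute the Gaussian integrals. This produces matrix equations for the potential matrices whose solution involves \((\Gamma^{1/2}\Sigma\Gamma^{1/2}+\tilde\varepsilon^2\id)^{1/2}\).

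\textbf{Singular covariances.} This is the main obstacle. I would argue by continuity, replacing \(\Sigma,\Gamma\) by \(\Sigma+\delta\id\) and \(\Gamma+\delta\id\). Gaussians converge in \(W_2\), and \(\OT_\varepsilon\) is continuous under \(W_2\) convergence for Gaussian (uniformly subgaussian) families, so \(S_\varepsilon\) converges. The closed form is continuous in \((\Sigma,\Gamma)\in(\mathbb S^d_+)^2\), because the matrix square root is continuous and \(D_\varepsilon+\tfrac\varepsilon2\id\succeq\varepsilon\id\), so the \(\log\det\) stays finite.

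For the potentials I would verify the Schrödinger equations directly in the singular case rather than pass to a limit. The expression \(F_\Sigma^\Gamma\) only involves \(\Gamma^{1/2}\) and a positive-definite inverse, so it is well defined for singular matrices. The Gaussian integral over \(\nu\), supported on \(n+\mathrm{range}(\Gamma)\), can be computed by parametrizing \(y=n+\Gamma^{1/2}z\) with \(z\sim\mathcal N(0,\id)\). This reduces the computation to the same algebraic identity with \(\Gamma^{1/2}\) in place of an invertible factor.

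\textbf{Gradients.} We have \(f_\mu=f_{\mu,\mu}\) with quadratic part \(F_\Sigma^\Sigma\). Since \(\Sigma^{1/2}\) commutes with \(\Sigma^2\), \(F_\Sigma^\Sigma=\id-\Sigma((\Sigma^2+\tilde\varepsilon^2\id)^{1/2}+\tilde\varepsilon\id)^{-1}\). The linear term of \(f_\mu\) is \(0\), and \(\nabla\langle x-m,F(x-m)\rangle=2F(x-m)\), so subtracting gives the stated form of \(G_\Sigma^\Gamma\).

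Finally, for \(\nabla_1\mathcal B_\varepsilon=\tfrac12G_\Sigma^\Gamma\), I would use the envelope theorem. Perturbing \(\Sigma\) in \(\OT_\varepsilon\) changes the dual value by \(\int f_{\mu,\nu}\dd(\mu'-\mu)\), and for quadratic \(f\) this equals \(\langle F_\Sigma^\Gamma,\Sigma'-\Sigma\rangle\) plus higher-order terms. For the self-term the factor \(\tfrac12\) cancels the doubling from the two arguments. This gives \(\nabla_\Sigma\mathcal B_\varepsilon=F_\Sigma^\Gamma-F_\Sigma^\Sigma=\tfrac12G_\Sigma^\Gamma\). As a fallback, I would differentiate the closed form of \(B_\varepsilon\) directly using \(\dd\log\det\) and \(\dd\Tr(A^{1/2})\) identities.
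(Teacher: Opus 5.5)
Your proposal is correct. For the means it matches the paper (you add an equivalent primal translation argument), and for the centered non-singular case both of you rely on the cited closed form; the real difference is the singular case.

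\textbf{Singular case.} The paper approximates by $\Sigma_k,\Gamma_k\in\mathbb S^d_{++}$. It notes that the quadratic potentials converge locally uniformly, identifies the limit as the Schrödinger pair of the limit measures via the Mena--Niles-Weed stability argument, and passes to the limit in $\int f_k\,\dd\mu_k+\int g_k\,\dd\nu_k$. You instead verify the Schrödinger system directly, and this indeed needs no invertibility. With $C=\Sigma^{1/2}\Gamma^{1/2}$ and $K=((CC^T+\tilde\varepsilon^2\id)^{1/2}+\tilde\varepsilon\id)^{-1}$, the Gaussian integral in $z$ produces the matrix $\tfrac\varepsilon2\id+C^TKC$. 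By an SVD of $C$ this equals $(C^TC+\tilde\varepsilon^2\id)^{1/2}+\tilde\varepsilon\id$, whose inverse is exactly the one in $F^\Gamma_\Sigma$.

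You should pair this with the elementary verification argument. If $\dd\pi=e^{(f\oplus g-c)/\varepsilon}\dd(\mu\otimes\nu)$ is a coupling and $f\in L^1(\mu)$, $g\in L^1(\nu)$, then every $\pi'\in\Pi(\mu,\nu)$ satisfies $\int c\,\dd\pi'+\varepsilon\KL(\pi'|\mu\otimes\nu)=\varepsilon\KL(\pi'|\pi)+\int f\,\dd\mu+\int g\,\dd\nu$. With that, your route is self-contained and avoids the stability theorem.

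It also gives more. The same computation fixes the normalising constant of the quadratic ansatz: $a+b=\tfrac\varepsilon2\log\det\big((D_\varepsilon+\tfrac\varepsilon2\id)/\varepsilon\big)$. So the value formula follows directly from $\OT_\varepsilon=\int f\,\dd\mu+\int g\,\dd\nu$, and your continuity-in-$W_2$ step, which you state without justification, becomes unnecessary. The paper's limit computation writes $\OT_\varepsilon(\mu_k,\nu_k)=\Tr(F^{\Gamma_k}_{\Sigma_k}\Sigma_k)+\Tr(F^{\Sigma_k}_{\Gamma_k}\Gamma_k)$ without these constants. They must be carried along; they converge, so the argument survives.

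\textbf{Gradient.} For $\nabla_1\mathcal B_\varepsilon$ the paper just relies on the cited closed form. Your fallback of differentiating that closed form is the cleaner route. The envelope argument would need a first-variation theorem for $\OT_\varepsilon$ on unbounded supports.

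\textbf{Correction on $g$.} The mismatch you noticed in the $g$ formula cannot be fixed by reindexing. The expressions $\langle x-m,F^\Sigma_\Gamma(x-m)\rangle$ and $\langle x-n,F^\Sigma_\Gamma(x-n)\rangle$ differ by $2\langle F^\Sigma_\Gamma(n-m),x\rangle$ plus a constant. A linear term is not covered by the constant-shift ambiguity, in general not even on the support of $\nu$. Your derivation, like \cref{lem:uncentered-potentials}, gives $x-n$, so the $x-m$ in the statement is a typo.
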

In the case of non-singular covariances, this result is a direct consequence of  \cite[Theorem 1 and Corollary 1]{janati2020entropic}, which deal with centered measures, and of the following lemma giving the Schrödinger potentials in the uncentered case. The extension to the singular case is proved afterwards.
\begin{lemma}\label{lem:uncentered-potentials}
Let \(\mu, \nu\in \mP_2(\bR^d)\), \(m, n\in \bR^d\) their respective means, and \(\Bar \mu, \Bar \nu\) their respective centered versions, then we have (up to constant shifts)
\begin{align*}
f_{\mu, \nu} &= f_{\Bar \mu, \Bar \nu}(\cdot - m) + 2 \langle m - n, \cdot \rangle\\
g_{\mu, \nu} &= g_{\Bar \mu, \Bar \nu}(\cdot - n) + 2 \langle n - m, \cdot \rangle - \| m-n\|^2.
\end{align*}
\end{lemma}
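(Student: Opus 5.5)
We plan to verify the Schrödinger system directly. Recall that \((f,g)\) are Schrödinger potentials for \((\mu,\nu)\) exactly when they satisfy the optimality conditions of the dual problem:
\begin{equation*}
\int \exp\big(\tfrac1\varepsilon(f(x)+g(y)-\|x-y\|^2)\big)\dd\nu(y) = 1 \ \mu\text{-a.e.}, \qquad \int \exp\big(\tfrac1\varepsilon(f(x)+g(y)-\|x-y\|^2)\big)\dd\mu(x) = 1 \ \nu\text{-a.e.}
\end{equation*}
Since these conditions determine the potentials up to the shift \((f+\lambda,g-\lambda)\), it suffices to check that the proposed pair satisfies them.

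The key step is a change of variables \(x = \bar x + m\), \(y = \bar y + n\), with \(\bar x\sim\bar\mu\) and \(\bar y\sim\bar\nu\). Expanding the cost gives
\begin{equation*}
\|x-y\|^2 = \|\bar x-\bar y\|^2 + 2\langle \bar x-\bar y, m-n\rangle + \|m-n\|^2 .
\end{equation*}
Write \(f(x) = \bar f(x-m) + 2\langle m-n, x\rangle\) and \(g(y) = \bar g(y-n) + 2\langle n-m,y\rangle - \|m-n\|^2\), where \((\bar f,\bar g) = (f_{\bar\mu,\bar\nu}, g_{\bar\mu,\bar\nu})\). Then
\begin{equation*}
f(x)+g(y)-\|x-y\|^2 = \bar f(\bar x)+\bar g(\bar y) - \|\bar x-\bar y\|^2 + R,
\end{equation*}
where the remainder is
\begin{equation*}
R = 2\langle m-n,\bar x+m\rangle + 2\langle n-m,\bar y+n\rangle - \|m-n\|^2 - 2\langle \bar x-\bar y,m-n\rangle - \|m-n\|^2 .
\end{equation*}
The terms linear in \(\bar x\) and \(\bar y\) cancel, leaving
\begin{equation*}
R = 2\langle m-n,m-n\rangle - 2\|m-n\|^2 = 0 .
\end{equation*}
Hence the integrand in the Schrödinger system for \((\mu,\nu)\) equals, after translation, the integrand in the system for \((\bar\mu,\bar\nu)\). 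Integrating against \(\nu\) in \(y\) is the same as integrating against \(\bar\nu\) in \(\bar y\), so both equations reduce to those satisfied by \((\bar f,\bar g)\) for \(\bar\mu\)-a.e.\ \(\bar x\), respectively \(\bar\nu\)-a.e.\ \(\bar y\). This proves the claim.

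The main technical point is not the algebra but the function class. The dual problem is posed over bounded continuous functions, whereas the proposed potentials include linear terms and are unbounded. We would handle this in the standard way, using the characterization of optimal potentials for measures in \(\mP_2(\bR^d)\) via the Schrödinger system with \(L^1\) potentials (as in \cite{nutz2022introduction}). Since \(\mu,\nu\) have finite second moments, linear functions are integrable, so the shifted pair still lies in the admissible class where the optimality conditions characterize maximizers. Alternatively, one can note directly that the dual objective transforms consistently: the same cancellation shows the objective for \((\mu,\nu)\) at \((f,g)\) equals the objective for \((\bar\mu,\bar\nu)\) at \((\bar f,\bar g)\) plus \(\|m-n\|^2\). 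This also recovers \(\OT_\varepsilon(\mu,\nu)=\OT_\varepsilon(\bar\mu,\bar\nu)+\|m-n\|^2\), consistent with \cref{thm:Seps-gauss}.
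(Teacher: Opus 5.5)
Your proposal is correct and follows essentially the same route as the paper: both verify the Schrödinger system for the translated pair by expanding \(\|x-y\|^2\) under the shift \(x=\bar x+m\), \(y=\bar y+n\) and observing that all extra linear and constant terms cancel. Your version packages both equations into the single identity \(f\oplus g - c = (\bar f\oplus\bar g - c)(\cdot-m,\cdot-n)\), which handles them at once instead of "symmetrically". Your remark that the potentials must be taken in an integrable class rather than among bounded functions addresses a point the paper leaves implicit.
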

We recall that the Schrödinger potentials \((f_{\mu,\nu}, g_{\mu,\nu})\) are characterized as the solutions of the Schrödinger fixed-point equation
\begin{equation}\label{eq:schrodinger-syst}
\begin{cases}
f_{\mu,\nu} = T_\varepsilon(g_{\mu,\nu}, \nu) \\
g_{\mu,\nu} = T_\varepsilon(f_{\mu,\nu}, \mu)
\end{cases}
\end{equation}
where the Sinkhorn mapping is defined by
\begin{equation*}
T_\varepsilon(f, \mu)(y) \coloneqq -\varepsilon\log\int\exp(\tfrac1\varepsilon(f(x)-\|x-y\|^2))\dd \mu(x).
\end{equation*}
\begin{proof}
We write \(\bar f, \bar g \coloneqq f_{\bar\mu,\bar\nu}, g_{\bar \mu, \bar \nu}\) for conciseness. By the Schrödinger system \eqref{eq:schrodinger-syst} we have
\begin{equation*}
\bar g(y-n) = -\varepsilon\log \int \exp(\tfrac1\varepsilon(\bar f(x-m) - \| y-x + m - n\|^2))\dd\mu(x)
\end{equation*}
and after expanding \(\|y-x + m-n\|^2\) and taking the quantities independent on \(x\) out of the \(\log\int\exp\), we obtain
\begin{align*}
\bar g (y-n) =& -\varepsilon\log \int \exp(\tfrac1\varepsilon(\bar f(x-m) + 2\langle m-n, x\rangle  - \|y-x\|^2))\dd\mu(x)\\
&+ 2\langle m-n, y\rangle +\|m-n\|^2
\end{align*}
which writes exactly as  \(g = T_\varepsilon(f, \mu)\) with \(f = \bar f(\cdot - m)+2\langle m-n, \cdot\rangle\) and \(g = \bar g(\cdot - n) + 2\langle n-m, \cdot\rangle - \| m-n\|^2\). Symmetrically we get \(T_\varepsilon(g, \nu) = f\) yielding the result.
\end{proof}
We can now prove that \cref{thm:Seps-gauss} holds for singular covariance matrices. 
\begin{proof}[Proof of \cref{thm:Seps-gauss}]
We take \(\Sigma_k, \Gamma_k \in \mathbb S_{++}^d\) for all \(k\) with \(\Sigma_k\to\Sigma\in\mathbb S^d_+\), and \(\Gamma_k\to \Gamma\in\mathbb S^d_+\). Denote \(\mu_k \coloneqq \mathcal N(0, \Sigma_k)\), \(\nu_k\coloneqq\mathcal N(0, \Gamma_k)\) (the uncentered case is recovered with \cref{lem:uncentered-potentials}), and \((f_k, g_k) \coloneqq (f_{\mu_k, \nu_k}, g_{\mu_k, \nu_k})\). By continuity of \(\Sigma', \Gamma' \mapsto F_{\Sigma'}^{\Gamma'}\) on \(\mathbb S^d_+\), we get that \(f_k\) (resp. \(g_k\)) converges uniformly on compact sets to \(f: x\mapsto \langle x, F_{\Sigma}^{\Gamma}x\rangle\) (resp. \(g: x\mapsto \langle x, F_\Gamma^\Sigma x\rangle\)), and for the weak-\textasteriskcentered{} convergence we have \(\mu_k\to\mu \coloneqq \mathcal N(0, \Sigma)\) and \(\nu_k\to\nu\coloneqq \mathcal N(0, \Gamma)\). Using the proof of \cite[Proposition 4, Step 2]{mena2019statistical}, we get that \((f,g) = (f_{\mu,\nu}, g_{\mu,\nu})\). It follows that the expression of the Sinkhorn divergence is also still valid, since \(\OT_\varepsilon(\mu_k, \nu_k) = \int f_k\dd\mu_k + \int g_k\dd\nu_k = \Tr(F_{\Sigma_k}^{\Gamma_k}\Sigma_k) + \Tr(F_{\Gamma_k}^{\Sigma_k}\Gamma_k) \to\Tr(F_{\Sigma}^{\Gamma}\Sigma) + \Tr(F_{\Gamma}^{\Sigma}\Gamma) = \int f\dd\mu+\int g\dd\nu = \OT_\varepsilon(\mu, \nu)\) and the expression given in \cref{thm:Seps-gauss} is continuous on \(\mathbb S^d_+\).
\end{proof}
We finish this section with a simple bound on the matrices involved in the Schrödinger potentials, which we will use recurrently in this paper.
\begin{lemma}\label{lem:Gbounded}
For \(\Sigma, \Gamma\in\mathbb S^d_+\), the eigenvalues of \(F_\Sigma^\Gamma\) belong to the interval \([1-\lambda_{\max}(\Gamma)/(2{\tilde\varepsilon}), 1]\) and that of \(F_\Sigma^\Sigma\) belong to \([0, 1]\). In particular, the eigenvalues of \(G_\Sigma^\Gamma\) are contained within \([-\lambda_{\mathrm{max}}(\Gamma)/\tilde\varepsilon, 2]\).
\end{lemma}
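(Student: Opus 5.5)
We bound the subtracted matrix \(M \coloneqq \Gamma^{\frac12}\big((\Gamma^{\frac12}\Sigma\Gamma^{\frac12}+\tilde\varepsilon^2\id)^{\frac12}+\tilde\varepsilon\id\big)\inv\Gamma^{\frac12}\) in the Loewner order and then read off the eigenvalue bounds. Since \(F_\Sigma^\Gamma = \id - M\), it suffices to show \(0\preceq M\preceq \frac{\lambda_{\max}(\Gamma)}{2\tilde\varepsilon}\id\). Set \(A\coloneqq(\Gamma^{\frac12}\Sigma\Gamma^{\frac12}+\tilde\varepsilon^2\id)^{\frac12}+\tilde\varepsilon\id\). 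It is symmetric, and since \(\Gamma^{\frac12}\Sigma\Gamma^{\frac12}\succeq0\), we have \(A\succeq 2\tilde\varepsilon\id\). Hence \(0\prec A\inv\preceq \frac1{2\tilde\varepsilon}\id\). Conjugating by the symmetric matrix \(\Gamma^{\frac12}\) preserves the Loewner order, so
\begin{equation*}
0\preceq M\preceq \tfrac1{2\tilde\varepsilon}\Gamma\preceq \tfrac{\lambda_{\max}(\Gamma)}{2\tilde\varepsilon}\id .
\end{equation*}
Since \(F_\Sigma^\Gamma\) is symmetric, this gives the spectrum of \(F_\Sigma^\Gamma\) in \([1-\lambda_{\max}(\Gamma)/(2\tilde\varepsilon),1]\).

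For \(F_\Sigma^\Sigma\) this bound is too crude, so we use the commuting form already displayed in \cref{thm:Seps-gauss}: \(2F_\Sigma^\Sigma = 2\id - 2\Sigma\big((\Sigma^2+\tilde\varepsilon^2\id)^{\frac12}+\tilde\varepsilon\id\big)\inv\). This can also be checked directly, since all matrices involved are functions of \(\Sigma\) and commute. Diagonalizing \(\Sigma\) with eigenvalues \(s\geq 0\), the eigenvalues of \(F_\Sigma^\Sigma\) are \(1-\frac{s}{\sqrt{s^2+\tilde\varepsilon^2}+\tilde\varepsilon}\). The fraction lies in \([0,1)\) because \(\sqrt{s^2+\tilde\varepsilon^2}\geq s\), so these eigenvalues lie in \((0,1]\subseteq[0,1]\).

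For \(G_\Sigma^\Gamma = 2(F_\Sigma^\Gamma - F_\Sigma^\Sigma)\), we cannot simply combine the two spectra, because the two matrices need not commute. We instead combine the Loewner bounds, which are valid for symmetric matrices:
\begin{equation*}
\big(1-\tfrac{\lambda_{\max}(\Gamma)}{2\tilde\varepsilon}\big)\id - \id \preceq F_\Sigma^\Gamma - F_\Sigma^\Sigma \preceq \id - 0 .
\end{equation*}
Multiplying by \(2\) gives \(-\frac{\lambda_{\max}(\Gamma)}{\tilde\varepsilon}\id\preceq G_\Sigma^\Gamma\preceq 2\id\), which is the claimed eigenvalue range.

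The only delicate point is this last step: we must phrase everything in terms of Loewner order rather than eigenvalue arithmetic, since \(F_\Sigma^\Gamma\) and \(F_\Sigma^\Sigma\) do not commute in general. The singular case requires no extra care, because neither \(\Sigma\) nor \(\Gamma\) needs to be invertible anywhere in the argument.
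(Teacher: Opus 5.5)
Your proof is correct and follows essentially the same route as the paper: bound \(((\Gamma^{\frac12}\Sigma\Gamma^{\frac12}+\tilde\varepsilon^2\id)^{\frac12}+\tilde\varepsilon\id)\inv\) above by \(\frac1{2\tilde\varepsilon}\id\), conjugate by \(\Gamma^{\frac12}\), read off the spectrum of \(F_\Sigma^\Sigma\) from the eigenvalues of \(\Sigma\), and subtract. Your Loewner-order phrasing of the last step is what the paper's ``difference of the two previous bounds'' implicitly relies on, but it is less delicate than you suggest: an interval bound on the spectrum of a symmetric matrix is the same thing as a Loewner sandwich, so such interval bounds can be subtracted whether or not the matrices commute.
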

\begin{proof}
 Denoting \((\alpha_i)_i\) the eigenvalues of the matrix \(\Gamma^{\frac12}\Sigma\Gamma^{\frac12}\), the eigenvalues of \(((\Gamma^{\frac12}\Sigma\Gamma^{\frac12}+\tilde\varepsilon^2\id)^{\frac12}+\tilde\varepsilon\id)^{-1}\) are given by \(\frac{1}{\sqrt{\alpha_i+\tilde\varepsilon^2}+\tilde\varepsilon}\) which is between 0 and \(\frac{1}{2\tilde\varepsilon}\). It results that \(\frac{1}{2\tilde\varepsilon}\id - ((\Gamma^{\frac12}\Sigma\Gamma^{\frac12}+\tilde\varepsilon^2\id)^{\frac12}+\tilde\varepsilon\id)^{-1}\) is a positive semidefinite matrix, and so is the same matrix multiplied left and right by \(\Gamma^{\frac12}\), giving that the eigenvalues of \(\frac{1}{2\tilde\varepsilon}\Gamma - \Gamma^{\frac12}((\Gamma^{\frac12}\Sigma\Gamma^{\frac12}+\tilde\varepsilon^2\id)^{\frac12}+\tilde\varepsilon\id)^{-1}\Gamma^{\frac12}\) are non-negative, whence the fact that the eigenvalues of \(\Gamma^{\frac12}((\Gamma^{\frac12}\Sigma\Gamma^{\frac12}+\tilde\varepsilon^2\id)^{\frac12}+\tilde\varepsilon\id)^{-1}\Gamma^{\frac12}\) are bounded above by \(\frac{1}{2\tilde\varepsilon}\lambda_{\mathrm{max}}(\Gamma)\). They are also non-negative. Subtracting this from the identity matrix yields the bounds for \(F_\Sigma^\Gamma\). The eigenvalues of \(F_\Sigma^\Sigma = I-\Sigma((\Sigma^2+\tilde\varepsilon^2\id)^{\frac12} + \tilde\varepsilon \id)^{-1}\) are given by \(1-\frac{\lambda_i}{\sqrt{\lambda_i^2+\tilde\varepsilon^2}+\tilde\varepsilon}\) for \((\lambda_i)_i\) the eigenvalues of \(\Sigma\), and are thus between 0 and 1. Finally, the bound on \(G_\Sigma^\Gamma\) follows by doing the difference of the two previous bounds and multiplying by 2.
\end{proof}

\section{Well-posedness of the flow} \label{sec:wellposed}
Throughout the rest of this paper, we fix \(\mu_\star \coloneqq \mathcal N(m_\star, \Sigma_\star)\) (with \(\Sigma_\star\) possibly singular unless stated otherwise) and study the PDE \eqref{eq:SWGF}. We will also denote \(G_\Sigma\) instead of \(G_\Sigma^{\Sigma_\star}\) to ease notations.

We first prove the existence of solutions by constraining the functional \(S_\varepsilon(\cdot, \mu_\star)\) to the set of Gaussian measures and obtaining its \(\lambda\)-convexity along generalized geodesics, yielding existence of gradient flows of this constrained functional which are proven to follow \eqref{eq:SWGF}. Then we show that the solutions of that PDE are also gradient flows of the unconstrained functional, and finally establish the uniqueness of solutions on a class of regular measures.
\subsection{Existence} \label{sec:wellposed:existence}
From now on, we denote \(\mathcal G \subset \mP_2(\bR^d) \) the set of (possibly singular) Gaussian distributions. We write \(\iota_{\mathcal G}\) the function defined on \(\mP_2(\bR^d)\) equal to \(0\) on \(\mathcal G\) and \(+\infty\) elsewhere, and we define
\begin{equation*}
E_{\mu_\star} \coloneqq S_\varepsilon(\cdot, \mu_\star) + \iota_{\mathcal G}.
\end{equation*}
We can now state our existence theorem, which also guarantees uniqueness on \(\mathcal G\).
\begin{theorem}\label{thm:µflow}
For any \(\mu_0\in\mathcal G\), there exists a unique Wasserstein gradient flow of \(E_{\mu_\star}\) starting at \(\mu_0\), which is described by the PDE \eqref{eq:SWGF} and is contained in \(\mathcal G\).
\end{theorem}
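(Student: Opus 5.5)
We will apply the existence and uniqueness theory of \cite[Theorems 11.2.1 and 11.1.3]{ambrosio2008gradient} to \(E_{\mu_\star}\). This requires three ingredients: properness and lower semicontinuity, \(\lambda\)-convexity along generalized geodesics for some \(\lambda\in\bR\) (here \(\lambda<0\)), and an identification of the minimal subdifferential \(\partial^\circ_W E_{\mu_\star}\) on \(\mathcal G\). Lower semicontinuity is straightforward. The set \(\mathcal G\) is closed in \(\mP_2(\bR^d)\), since limits of characteristic functions of Gaussians are Gaussian, including singular ones. On \(\mathcal G\) the functional is continuous, because \cref{thm:Seps-gauss} expresses it as \(\|m-m_\star\|^2+\mathcal B_\varepsilon(\Sigma,\Sigma_\star)\), which is continuous on \(\bR^d\times\mathbb S^d_+\). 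Coercivity will follow from \(S_\varepsilon\geq 0\).

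For convexity we exploit the fact that \(\mathcal G\) is stable under generalized geodesics when the plan \(\bar\pi\) is chosen jointly Gaussian. Given \(\omega,\mu,\nu\in\mathcal G\), \(\omega\) Gaussian, the optimal plans \(\Pi_{\mathrm o}(\omega,\mu)\) and \(\Pi_{\mathrm o}(\omega,\nu)\) contain Gaussian plans, namely linear maps in the nonsingular case and limits of such in general. Gluing them conditionally on the \(\omega\)-variable produces a Gaussian \(\bar\pi\), and then every \(\mu_t\) is Gaussian. If \(\omega\) is not Gaussian, the indicator makes the inequality trivial unless \(\mu,\nu\in\mathcal G\). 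In that case we may still pick \(\bar\pi\) whose \((\mathbf p_2,\mathbf p_3)\) marginal is Gaussian, and we must check that the interpolation stays Gaussian. This is the delicate point. The fallback is to use the variant of the theory that requires convexity only along generalized geodesics based at points of the domain, which \cite[Lemma 9.2.7 and Theorem 4.0.4]{ambrosio2008gradient} allow. Along such a curve \(\mu_t=\mathcal N(m_t,\Sigma_t)\) the mean term \(\|m_t-m_\star\|^2\) is \(1\)-convex in \(t\), with quadratic part bounded by \(W_{\bar\pi}^2\).

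The main obstacle is the covariance part. Write \(\Sigma_t=\mathbb E[((1-t)Y+tZ)((1-t)Y+tZ)^\top]\), where \((Y,Z)\) are the centered second and third coordinates under \(\bar\pi\). We plan to bound \(\frac{\dd^2}{\dd t^2}\mathcal B_\varepsilon(\Sigma_t,\Sigma_\star)\) from below by \(\lambda\,\mathbb E\|Y-Z\|^2\). The relevant structure is that, as a function of a random vector \(X\), \(\mathcal B_\varepsilon(\mathrm{Cov} X,\Sigma_\star)\) has Hessian given by differentiating \(X\mapsto \frac12\mathbb E\langle X,G_{\mathrm{Cov}X}X\rangle\). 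This amounts to \(\frac12\mathbb E\langle V,G_\Sigma V\rangle\) plus a term involving the derivative of \(\Sigma\mapsto G_\Sigma\). \cref{lem:Gbounded} bounds the first term below by \(-\frac{\lambda_{\max}(\Sigma_\star)}{2\tilde\varepsilon}\mathbb E\|V\|^2\) with \(V=Z-Y\). The second term splits along the decomposition of \(G_\Sigma\) into \(2(F_\Sigma^{\Sigma_\star}-F_\Sigma^\Sigma)\). The self-interaction part \(-\frac12\OT_\varepsilon(\mu,\mu)\) contributes a term whose sign we will determine using operator concavity of \(x\mapsto x(\sqrt{x^2+\tilde\varepsilon^2}+\tilde\varepsilon)^{-1}\)-type maps, or else simply bound it in operator norm. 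The cross term \(B_\varepsilon(\Sigma,\Sigma_\star)\) is concave in \(\Sigma\) up to the \(\Tr\Sigma\) part, via operator monotonicity of the square root, so it is harmless. We expect to end up with \(\lambda=-C(1+\lambda_{\max}(\Sigma_\star)/\tilde\varepsilon)\) for some constant \(C\). If the self-term cannot be signed, we will instead bound the derivative of \(\Sigma\mapsto \Sigma((\Sigma^2+\tilde\varepsilon^2)^{1/2}+\tilde\varepsilon)^{-1}\) uniformly, using the spectral bounds of \cref{lem:Gbounded}.

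Once convexity is established, the theorem of \cite{ambrosio2008gradient} gives a unique gradient flow with values in \(\overline{\mathrm{dom}}\,E_{\mu_\star}=\mathcal G\). Its velocity is \(-\partial^\circ_W E_{\mu_\star}(\mu_t)\). It remains to show that this minimal element equals \(x\mapsto G_{\Sigma}(x-m)+2(m-m_\star)\). For \(\nu\in\mathcal G\) the subdifferential inequality \eqref{eq:defsubdiff} follows from a first-order expansion of the explicit formula using \(\nabla_1\mathcal B_\varepsilon=\frac12G_\Sigma\) together with the \(\lambda\)-convexity. For \(\nu\notin\mathcal G\) the inequality is trivial. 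This shows that the field belongs to \(\partial_W E_{\mu_\star}(\mu)\). To prove minimality we argue that any \(\xi\) in the subdifferential, tested along Gaussian directions \(x\mapsto Ax+b\), must have the same \(L^2_\mu\)-projection onto affine fields. Since our field is affine, it is therefore the projection of \(\xi\) and so has smaller norm. Hence \(\mu_t\) solves \eqref{eq:SWGF}.
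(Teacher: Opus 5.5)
Your overall plan matches the paper's: glue Gaussian optimal plans so that generalized geodesics with base in $\mathcal G$ stay in $\mathcal G$ (bases in the domain are all that \cite[Theorem 11.2.1]{ambrosio2008gradient} needs), apply the minimal selection principle, and identify $\partial^\circ_W E_{\mu_\star}$ because the affine field is the $L^2_\mu$-projection of every subgradient onto affine fields. The gap is in the step you flag as the main obstacle, namely the sign of the Hessian term $\Tr\big(\dot\Sigma_t\,\nabla_1^2\mathcal B_\varepsilon(\Sigma_t,\Sigma_\star)[\dot\Sigma_t]\big)$. Your treatment of it does not work as written.

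You call the cross term concave and therefore harmless. But a concave piece contributes a nonpositive amount to the second derivative, which is exactly what would spoil a lower bound. The claim is also false: $B_\varepsilon(\Sigma,\Sigma_\star)-\Tr\Sigma-\Tr\Sigma_\star=\Tr h(\Sigma_\star^{1/2}\Sigma\Sigma_\star^{1/2})$ with $h'(\alpha)=-(\sqrt{\alpha+\tilde\varepsilon^2}+\tilde\varepsilon)^{-1}$. This $h'$ is nondecreasing, so $h$ is convex and this piece is convex, not concave.

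For the self term you leave the sign open, but it is convex too: $-\tfrac12B_\varepsilon(\Sigma,\Sigma)+\Tr\Sigma=\Tr k(\Sigma)$ with $k'(\lambda)=\lambda(\sqrt{\lambda^2+\tilde\varepsilon^2}+\tilde\varepsilon)^{-1}$ nondecreasing on $\bR_+$. Only scalar monotonicity of this map is needed, not operator concavity. Consistently, the two gradients sum to $F_\Sigma^{\Sigma_\star}-F_\Sigma^\Sigma=\tfrac12G_\Sigma$.

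So $\Sigma\mapsto\mathcal B_\varepsilon(\Sigma,\Sigma_\star)$ is convex in the Euclidean sense on $\mathbb S^d_+$. This is the fact the paper imports from \cite{janati2020entropic} to discard the Hessian term entirely. Only $\Tr(\ddot\Sigma_t\,\tfrac12G_{\Sigma_t})$ then remains, and \cref{lem:Gbounded} gives $\lambda=-\lambda_{\max}(\Sigma_\star)/\tilde\varepsilon$.

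Your fallback does not rescue the argument. A uniform bound (by $1/(2\tilde\varepsilon)$) on the derivative of $\Sigma\mapsto\Sigma((\Sigma^2+\tilde\varepsilon^2\id)^{1/2}+\tilde\varepsilon\id)^{-1}$ controls the self term only by a multiple of $\|\dot\Sigma_t\|_F^2$. In turn, $\|\dot\Sigma_t\|_F^2\le 4\Tr(\Sigma_t)\,W_{\bar\pi}^2(\mu,\nu)$ cannot be improved to a bound by $W_{\bar\pi}^2$ alone. For example, in dimension one take base $\omega=\mu=\mathcal N(0,\sigma^2)$ and $\nu=\mathcal N(0,(\sigma+\delta)^2)$. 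Then $\Sigma_t=(\sigma+t\delta)^2$, $\dot\Sigma_t^2=4\Sigma_t\delta^2$ and $W_{\bar\pi}^2=\delta^2$.

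The resulting modulus would depend on the second moments of $\mu$ and $\nu$, whereas \cite[Theorem 11.2.1]{ambrosio2008gradient} requires a single fixed $\lambda$ on the whole domain. Your expected constant $-C(1+\lambda_{\max}(\Sigma_\star)/\tilde\varepsilon)$ cannot come out of such a bound. Existence, uniqueness and the minimal selection all rest on this $\lambda$-convexity, so the proposal as written does not prove the theorem. Once you replace the splitting discussion by the Euclidean convexity of $\mathcal B_\varepsilon(\cdot,\Sigma_\star)$, the rest of your plan goes through as in the paper.
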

The existence and uniqueness are proven using \cref{thm:ggcvx} below, allowing to apply \cite[Theorem 11.2.1]{ambrosio2008gradient}. This result gives a Wasserstein gradient flow which follows the PDE \(\dot\mu_t = \div(\mu_t\partial_W^\circ E_{\mu_\star}(\mu_t))\) because of the minimal selection principle. The fact that it coïncides with the expression in \eqref{eq:SWGF} is given by \cref{prop:d^0E}, and that the flow stays Gaussian comes from the fact that \(t\mapsto E_{\mu_\star}(\mu_t)\) decreases and starts at a finite value.
\begin{theorem}\label{thm:ggcvx}
Let \(\mu_\star \in \mathcal G\) and let \(\Sigma_\star\) be its covariance matrix. Then \(E_{\mu_\star}\) is \(-\lambda_{\mathrm{max}}(\Sigma_\star)/\tilde\varepsilon\)-convex along generalized geodesics.
\end{theorem}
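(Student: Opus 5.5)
We reduce the statement to a finite-dimensional convexity estimate on means and covariances. If \(\mu\) or \(\nu\) is not Gaussian, the right-hand side of the convexity inequality is \(+\infty\) and there is nothing to prove. So let \(\omega\in\mP_2(\bR^d)\) be arbitrary and \(\mu=\mathcal N(m_1,\Sigma_1)\), \(\nu=\mathcal N(m_2,\Sigma_2)\).

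We must build a generalized geodesic of base \(\omega\) that stays in \(\mathcal G\); otherwise the left-hand side is \(+\infty\). The natural choice is to replace \(\omega\) by a Gaussian with the same mean and covariance. First take \(\bar\pi\) as follows: pick \(\gamma_1\in\Pi_{\mathrm o}(\omega,\mu)\) and \(\gamma_2\in\Pi_{\mathrm o}(\omega,\nu)\), and glue them along \(\omega\).

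The difficulty is that for a non-Gaussian base, \(((1-t)\mathbf p_2+t\mathbf p_3)_\#\bar\pi\) need not be Gaussian. Instead we use that optimal maps from any measure to a Gaussian are not linear in general. So our first attempt is the following. Since \(E_{\mu_\star}\) only needs \emph{some} generalized geodesic, we exploit the freedom in the gluing. Let \(\tilde\omega\) be the Gaussian with the moments of \(\omega\). Consider the joint law \((\omega\text{-coupling})\) given by sampling \(X\sim\omega\), then setting \(Y=m_1+A_1(X-\bar m)\) and \(Z=m_2+A_2(X-\bar m)\) with linear maps \(A_i\). Such a plan is optimal only if \(\omega\) is itself Gaussian.

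Hence the cleaner route is to construct \(\bar\pi\) with first marginal \(\omega\) and \((\mathbf p_2,\mathbf p_3)\)-marginal a jointly Gaussian coupling of \(\mu,\nu\). Concretely, take optimal \(\gamma_1,\gamma_2\), and note that \(\int\langle x,y\rangle\,\dd\gamma_1\) depends only on \(\omega\) and \(\mu\). We then replace the pair law \((Y,Z)\) by the Gaussian with the same second moments. This Gaussian is still a coupling of \(\mu,\nu\), and the interpolation \((1-t)Y+tZ\) is Gaussian with mean \((1-t)m_1+tm_2\) and covariance \(\Sigma_t=(1-t)^2\Sigma_1+t^2\Sigma_2+t(1-t)(C+C^\top)\), where \(C\) is the cross-covariance. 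Whether this replaced plan still glues to \(\omega\) optimally is the delicate point. If it fails, we fall back on the fact that \cite[Theorem 11.2.1]{ambrosio2008gradient} uses the inequality only through the distance-type quantity \(W_{\bar\pi}^2=\|m_1-m_2\|^2+\Tr(\Sigma_1+\Sigma_2-C-C^\top)\). We can then argue with the Gaussian base \(\tilde\omega\), since convexity of the functional only involves second moments.

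With the curve fixed, \(E_{\mu_\star}(\mu_t)=\|(1-t)m_1+tm_2-m_\star\|^2+\mathcal B_\varepsilon(\Sigma_t,\Sigma_\star)\) by \cref{thm:Seps-gauss}. The mean part is exactly convex: it yields \((1-t)a+tb-t(1-t)\|m_1-m_2\|^2\), which is better than needed. For the covariance part, we write \(\Sigma_t=\mathbb E[(Y_t-\bar Y_t)(Y_t-\bar Y_t)^\top]\) with \(Y_t\) affine in \(t\). So \(t\mapsto\Sigma_t\) is a quadratic matrix curve with \(\ddot\Sigma_t=2\,\mathrm{Cov}(Z-Y)\) and \(\dot\Sigma_t=2\,\mathrm{Cov}(Y_t,Z-Y)_{\mathrm{sym}}\).

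We then differentiate \(h(t)=\mathcal B_\varepsilon(\Sigma_t,\Sigma_\star)\) twice using \(\nabla_1\mathcal B_\varepsilon=\frac12 G_{\Sigma}\). Writing \(h=\mathbb E[\phi_t(Y_t)]\) with the Otto-calculus viewpoint, we decompose \(\mathcal B_\varepsilon\) into \(B_\varepsilon(\Sigma,\Sigma_\star)-\frac12B_\varepsilon(\Sigma,\Sigma)\) and treat each term separately.

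The main obstacle is the key estimate. We would show that \(\Sigma\mapsto\mathcal B_\varepsilon(\Sigma,\Sigma_\star)\), composed with the map \(A\mapsto AA^\top\) (i.e.\ as a function of the random vector \(Y_t\), equivalently of a square-root factor \(A_t=(1-t)A_1+tA_2\)), has Hessian bounded below by \(-\lambda_{\max}(\Sigma_\star)/\tilde\varepsilon\) times the identity in Frobenius norm.

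For the self term, \(-\frac12 B_\varepsilon(\Sigma,\Sigma)=-\frac12\OT_\varepsilon(\mu,\mu)\) plus linear terms. We expect to show it is convex in \(A\): it corresponds to \(\Tr(F_\Sigma^\Sigma\Sigma)\)-type terms with \(0\preceq F_\Sigma^\Sigma\preceq\id\) by \cref{lem:Gbounded}. We try to express \(-\OT_\varepsilon(\mu,\mu)\) as a supremum over dual potentials of functions affine in \(\mu\), hence convex along the interpolation.

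For the cross term, \(\OT_\varepsilon(\cdot,\mu_\star)\) is, via the dual, a supremum over \(g\) of \(\int T_\varepsilon(g,\mu_\star)\,\dd\mu\). The \(c\)-transform-type potential \(f_{\mu,\mu_\star}\) is semiconcave with Hessian \(2F_\Sigma^{\Sigma_\star}\succeq(2-\lambda_{\max}(\Sigma_\star)/\tilde\varepsilon)\id\) by \cref{lem:Gbounded}. Combining with the \(-\|x\|^2\) parts cancelling those from \(\Tr(\Sigma)\), the second derivative of \(h\) is at least \(-\frac{\lambda_{\max}(\Sigma_\star)}{\tilde\varepsilon}\mathbb E\|Z-Y\|^2\) up to the mean part. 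Integrating this second-order bound gives the \(\lambda\)-convexity inequality with \(W^2_{\bar\pi}\). Verifying this second-derivative bound rigorously, including for singular \(\Sigma_t\), is where we expect the real work. For singular \(\Sigma_t\) we would use approximation by \(\Sigma_t+\delta\id\) and continuity from \cref{thm:Seps-gauss}.
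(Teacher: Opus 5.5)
Your proposal has two genuine gaps. The first is the base point. You take \(\omega\in\mP_2(\bR^d)\) arbitrary, but you never exhibit a generalized geodesic of base \(\omega\) that stays in \(\mathcal G\). Replacing the law of \((Y,Z)\) by a Gaussian with the same second moments generally destroys the optimality of the couplings with \(\omega\). For absolutely continuous \(\omega\), \(\Pi_{\mathrm o}(\omega,\mu)\) and \(\Pi_{\mathrm o}(\omega,\nu)\) are singletons induced by (generally nonlinear) Brenier maps \(T_\mu,T_\nu\). So \(\bar\pi\) is fully determined, and \(((1-t)T_\mu+tT_\nu)_\#\omega\) has no reason to be Gaussian, which makes the left-hand side \(+\infty\). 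Your fallback does not repair this. A curve built on the Gaussian base \(\tilde\omega\) is not a generalized geodesic of base \(\omega\). Nor does the minimizing-movement argument behind \cite[Theorem 11.2.1]{ambrosio2008gradient} use the inequality only through \(W_{\bar\pi}^2\): it needs convexity of \(\frac1{2\tau}W_2^2(\omega,\cdot)+E_{\mu_\star}\) along a curve based at the actual previous iterate \(\omega\). The point you are missing is that, in the sense of \cite{ambrosio2008gradient}, base points only need to lie in the domain \(D(E_{\mu_\star})=\mathcal G\), and the previous iterates are Gaussian. Then the optimal plans can be taken Gaussian and their gluing along \(\omega\) is Gaussian (\cref{lem:GaussianOTplanandglueing}), so \(\mu_t\in\mathcal G\) with no delicate point at all.

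The second gap is the key second-order bound, which you leave as ``the main obstacle''. The route you sketch for the self term fails for three reasons. \(\OT_\varepsilon(\mu,\mu)\) is a supremum of dual objectives, so \(-\OT_\varepsilon(\mu,\mu)\) is an infimum. Those objectives are not affine in \(\mu\) anyway, since the exponential term is integrated against \(\mu\otimes\mu\). And affinity in \(\mu\) says nothing about convexity along \(((1-t)Y+tZ)_\#\). In fact \(-\frac12B_\varepsilon(AA^\top,AA^\top)\) is \emph{not} convex in \(A\): it equals \(-\Tr(AA^\top)\) plus a convex term, so it is strictly concave near \(A=0\). Your semi-dual touching argument does work for the cross term, if you freeze \(g=g_{\mu_s,\mu_\star}\), for which \(T_\varepsilon(g,\mu_\star)=f_{\mu_s,\mu_\star}\) has Hessian \(2F_{\Sigma_s}^{\Sigma_\star}\). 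It cannot handle the self term, though.

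The missing ingredient is the Euclidean convexity of \(\Sigma\mapsto\mathcal B_\varepsilon(\Sigma,\Sigma_\star)\) on \(\mathbb S^d_+\) \cite[Proposition 6 (iii)]{janati2020entropic}. With it, what you already wrote down suffices. The chain rule gives \(h''(t)=\Tr(\ddot\Sigma_t\nabla_1\mathcal B_\varepsilon(\Sigma_t,\Sigma_\star))+\nabla_1^2\mathcal B_\varepsilon(\Sigma_t,\Sigma_\star)[\dot\Sigma_t,\dot\Sigma_t]\), and the second term is non-negative. Since \(\ddot\Sigma_t=2\,\mathrm{Cov}(Z-Y)\succeq0\) and \(G_{\Sigma_t}\succeq-\frac{\lambda_{\max}(\Sigma_\star)}{\tilde\varepsilon}\id\) (\cref{lem:Gbounded}), the first term is at least \(-\frac{\lambda_{\max}(\Sigma_\star)}{\tilde\varepsilon}\Tr\,\mathrm{Cov}(Z-Y)\), which is exactly the claimed constant. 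No splitting into cross and self terms is needed. Nor is any approximation at singular \(\Sigma_t\), since the closed form of \(\mathcal B_\varepsilon(\cdot,\Sigma_\star)\) is smooth on a neighborhood of \(\mathbb S^d_+\). If you want to check the convexity by hand: after the \(\Tr\Sigma\) terms cancel, \(\mathcal B_\varepsilon(\Sigma,\Sigma_\star)\) is, up to a constant, \(\Tr\psi(\Sigma_\star^{1/2}\Sigma\Sigma_\star^{1/2})+\Tr\phi(\Sigma)\) with convex scalar functions \(\psi,\phi\).
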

In order to prove this, we first need to show that we can always pick a generalized geodesic which stays in \( \mathcal G\).
\begin{lemma}\label{lem:GaussianOTplanandglueing}
For \(\omega, \mu, \nu \in \mathcal G\), there exists a Gaussian distribution \(\bar \pi\in \mP_2((\bR^d)^3)\) with the three previous measures as marginals, such that \((\mathbf p_{1, 2})_\#\bar\pi \in \Pi_{\mathrm o}(\omega, \mu)\) and \((\mathbf p_{1, 3})_\#\bar\pi \in \Pi_{\mathrm o}(\omega, \nu)\). In particular, there is always a generalized geodesic \((\mu_t)_{t\in[0, 1]}\) between \(\mu\) and \(\nu\) of base \(\omega\) such that \(\mu_t \in\mathcal G\) for all \(t\).
\end{lemma}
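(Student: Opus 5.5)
We plan to build \(\bar\pi\) explicitly as a jointly Gaussian vector \((X, Y, Z)\) with \(X\sim\omega\). Write \(\omega = \mathcal N(a, A)\), \(\mu = \mathcal N(m, \Sigma)\), \(\nu = \mathcal N(n, \Gamma)\). The first step is to show that between two (possibly singular) Gaussians there is always an optimal plan that is itself Gaussian, and that it can be realized as \(Y = m + T(X-a) + W\). Here \(T\) is a \(d\times d\) matrix and \(W\) is a centered Gaussian independent of \(X\).

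When \(A\) is non-singular, we take the classical Bures--Wasserstein map \(T = A^{-\frac12}(A^{\frac12}\Sigma A^{\frac12})^{\frac12}A^{-\frac12}\) and \(W=0\). For general \(A, \Sigma\in\mathbb S^d_+\), we argue by approximation, in the same spirit as the proof of \cref{thm:Seps-gauss}. Take \(A_k, \Sigma_k \in\mathbb S^d_{++}\) converging to \(A, \Sigma\). The optimal plans \(\pi_k\) are Gaussian on \((\bR^d)^2\), with covariances \(\begin{pmatrix} A_k & C_k\\ C_k^\top & \Sigma_k\end{pmatrix}\) that are bounded. Extract a converging subsequence. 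The limit \(\pi\) is a Gaussian coupling of \(\omega\) and \(\mu\), because Gaussians are closed under weak limits with converging second moments. It is optimal, by stability of optimal plans under weak convergence \cite{villani2009optimal}. Alternatively, we can minimize \(\int\|x-y\|^2\dd\pi = \Tr A + \Tr\Sigma - 2\Tr C\) over the compact set of admissible cross-covariances \(C\), and note that the Gaussian with these moments attains the lower bound valid for all couplings, since the cost only depends on second moments. The limiting optimal Gaussian plan then has cross-covariance \(C\). Conditional Gaussian formulas give \(Y = m + C^\top A^{+}(X-a) + W\), where \(A^+\) is the pseudo-inverse and \(W\sim\mathcal N(0, \Sigma - C^\top A^+ C)\) is independent of \(X\). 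This uses that the range of \(C\) lies in the range of \(A\), which holds by positive semi-definiteness of the joint covariance.

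Next we glue. Do the same for \((\omega, \nu)\), obtaining \(Z = n + T'(X-a) + W'\), and choose \(W, W'\) mutually independent and independent of \(X\). Then \((X, Y, Z)\) is jointly Gaussian with law \(\bar\pi\). Its \(\mathbf p_{1,2}\) and \(\mathbf p_{1,3}\) marginals are exactly the optimal plans constructed above, so \(\bar\pi\) has the required properties.

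Finally, \(\mu_t = ((1-t)\mathbf p_{1,2}+t\mathbf p_{1,3})_\#\bar\pi\), read as the law of \((1-t)Y + tZ\), is a linear image of a Gaussian vector. It is therefore in \(\mathcal G\) for every \(t\in[0,1]\).

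The main obstacle is the singular case of the first step, namely guaranteeing that a Gaussian optimal plan exists when \(A\) or \(\Sigma\) is degenerate, where no Monge map need exist. We expect the second-moment argument to settle it cleanly. Every coupling \(\pi\) has the same cost as the Gaussian with its mean and covariance, and that Gaussian is again a coupling of \(\omega\) and \(\mu\). Hence the minimum over all couplings is achieved within Gaussian couplings, by compactness of the set of feasible \(C\).
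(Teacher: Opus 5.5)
Your proposal is correct and is essentially the paper's proof: a Gaussian optimal plan exists because the cost of any coupling depends only on its first and second moments, and the Gaussian with those moments is again a coupling of \(\omega\) and \(\mu\); the two Gaussian optimal plans are then glued conditionally independently given \(X\sim\omega\). Your representation \(Y = m + C^T A^{+}(X-a) + W\), \(Z = n + T'(X-a) + W'\) with independent noises makes explicit why this gluing is Gaussian (the paper checks this via the characteristic function of the disintegrated measure), and the approximation argument in your first step is unnecessary once the second-moment argument is used.
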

\begin{proof}
For \(\pi_\mu \in \Pi_{\mathrm o}(\omega, \mu)\), the quantity \(\int \|x-y\|^2\dd\pi_\mu(x, y)\) depends only on the covariance matrix of \(\pi_\mu\) which has Gaussian marginals, so it can always be taken to be Gaussian. The same holds for \(\pi_\nu \in \Pi_{\mathrm o}(\omega, \nu)\). Denoting \(\dd \pi_\mu(x, y) = \dd \pi_{\mu|\omega}(y|x)\dd\omega(x)\) (resp.  \(\dd \pi_\nu(x, z) = \dd \pi_{\nu|\omega}(z|x)\dd\omega(x)\)) the disintegration of \(\pi_\mu\) (resp. \(\pi_\nu\)) with respect to \(\omega\), we build \(\dd\bar\pi(x, y, z) = \dd\pi_{\mu|\omega}(y|x) \dd\pi_{\nu|\omega}(z|x)\dd\omega(x)\) the usual gluing, and one can check that it is Gaussian by computing its characteristic function. The rest of the statement follows from the fact that for all \(t\), pushforwarding a Gaussian measure on \((\bR^d)^3\) by \((1-t) \mathbf p_{1, 2} + t\mathbf p_{1, 3}\) yields a Gaussian measure on \((\bR^d)^2\).
\end{proof}
With this lemma in hand, we can show our convexity result.
\begin{proof}[Proof of \cref{thm:ggcvx}.]
Without loss of generality, we consider the case of centered Gaussian measures. Indeed, the mean along a generalized geodesic is just the Euclidean geodesic between the source's and target's means, and \(\|\cdot-m_\star\|^2\) is 0-convex. Let \(\omega, \mu, \nu \in \mathcal G\) and let \(\bar\pi\) be a Gaussian plan with marginals \((\omega, \mu, \nu)\), satisfying \((\mathbf p_{1, 2})_\#\pi \in \Pi_{\mathrm o}(\omega, \mu)\) and \((\mathbf p_{1, 3})_\#\bar\pi \in \Pi_{\mathrm o}(\omega, \nu)\), as given by \cref{lem:GaussianOTplanandglueing}. Then \(\mu_t \coloneqq ((1-t)\mathbf p_2+t\mathbf p_3)_\#\bar\pi\) is a Gaussian of covariance
\begin{equation*}
\Sigma_t = \int((1-t)y+tz)((1-t)y+tz)^T\dd\bar\pi(x, y, z)
\end{equation*}
and thus by differentiating twice under the integral,
\begin{equation*}
\ddot\Sigma_t = 2\int(y-z)(y-z)^T\dd\bar\pi(x, y, z)
\end{equation*}
which is a PSD matrix, also satisfying \(\Tr(\ddot \Sigma_t) = 2W_{\bar\pi}(\mu, \nu)\). Now observe that
\begin{equation*}
\frac{\dd^2}{\dd t^2}S_\varepsilon(\mu_t, \mu_\star) = \Tr(\ddot\Sigma_t \nabla_1 \mathcal B_\varepsilon(\Sigma_t, \Sigma_\star)) + \Tr(\dot \Sigma_t \nabla_1^2\mathcal B_\varepsilon(\Sigma_t, \Sigma_\star)(\dot \Sigma_t))
\end{equation*}
where the second term is non-negative by convexity (\cite[Proposition 6 (iii)]{janati2020entropic}), and the first one can be rewritten \(\Tr(\ddot\Sigma_t \nabla_1 \mathcal B_\varepsilon(\Sigma_t, \Sigma_\star)) = \frac12\Tr((\ddot\Sigma_t)^{\frac12} G_{\Sigma_t}(\ddot\Sigma_t)^{\frac12}) \geq -\frac{\lambda_{\mathrm{max}}(\Sigma_\star)}{2\tilde\varepsilon} \Tr(\ddot \Sigma_t)\) by \cref{lem:Gbounded}, and this last bound is equal to \(-\frac{\lambda_{\mathrm{max}}(\Sigma_\star)}{\tilde\varepsilon}W_{\bar\pi}^2(\mu, \nu)\). Therefore \(t\mapsto S_\varepsilon(\mu_t, \mu_\star)\) is \(-\frac{\lambda_{\mathrm{max}}(\Sigma_\star)}{\tilde\varepsilon}W_{\bar\pi}^2(\mu, \nu)\)-convex, yielding the result.
\end{proof}
Now that convexity and thus existence of a Wasserstein gradient flow of \(E_{\mu_\star}\) is established, we show that it indeed corresponds to the PDE \cref{eq:SWGF} by computing the minimal subdifferential of \(E_{\mu_\star}\).
\begin{proposition}\label{prop:d^0E}
Let \(\mu \in \mathcal G\), then the Wasserstein subdifferential of \(E_{\mu_\star}\) at \(\mu\) is given by \(\partial_W E_{\mu_\star}(\mu) = \nabla (f_{\mu, \mu_\star} - f_\mu) + \partial_W \iota_{\mathcal G}(\mu)\), and \(\partial_W^\circ E_{\mu_\star}(\mu) = \nabla (f_{\mu, \mu_\star} - f_\mu)\).
\end{proposition}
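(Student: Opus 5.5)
The plan has two parts: first identify the full subdifferential as a sum, then show that the element of minimal norm is the first summand.

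\emph{The inclusion \(\nabla(f_{\mu,\mu_\star}-f_\mu)+\partial_W\iota_{\mathcal G}(\mu)\subseteq\partial_W E_{\mu_\star}(\mu)\).} Write \(\xi_\mu\coloneqq\nabla(f_{\mu,\mu_\star}-f_\mu)\). By \cref{thm:Seps-gauss}, \(\xi_\mu(x)=G_\Sigma(x-m)+2(m-m_\star)\) is affine. Since \(E_{\mu_\star}=+\infty\) off \(\mathcal G\), the inequality \eqref{eq:defsubdiff} is trivial for \(\nu\notin\mathcal G\), so only \(\nu=\mathcal N(n,\Gamma)\in\mathcal G\) needs checking. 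By \cref{lem:GaussianOTplanandglueing} we may take a Gaussian \(\pi\in\Pi_{\mathrm o}(\mu,\nu)\). Because \(\xi_\mu\) is affine, a direct computation gives
\begin{equation*}
\int\langle\xi_\mu(x),y-x\rangle\dd\pi=2\langle m-m_\star,n-m\rangle+\tfrac12\Tr\big(G_\Sigma(\Gamma-\Sigma)\big)+\text{(cross-covariance terms)},
\end{equation*}
where the cross-covariance terms cancel by symmetry of \(G_\Sigma\), since \(\Tr(G_\Sigma(C-\Sigma))\) appears with coefficient \(\tfrac12(\ldots)\). I would verify this carefully. The resulting expression is exactly the first-order variation \(\langle\nabla_m,\delta m\rangle+\Tr(\nabla_1\mathcal B_\varepsilon(\Sigma,\Sigma_\star)(\Gamma-\Sigma))\) of \(S_\varepsilon\), using \(\nabla_1\mathcal B_\varepsilon=\tfrac12 G_\Sigma\).

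It then remains to compare \(S_\varepsilon(\nu,\mu_\star)-S_\varepsilon(\mu,\mu_\star)\) with this linear term up to \(o(W_2(\mu,\nu))\). Two approaches are available:
\begin{itemize}
\item[(i)] use the \(\lambda\)-convexity along the (Gaussian) geodesic from \cref{thm:ggcvx}, which gives \(F(\nu)-F(\mu)\ge \frac{\dd}{\dd t}\big|_{0^+}F(\mu_t)+\frac\lambda2W_2^2\), with the right derivative computed by the chain rule as the linear term above;
\item[(ii)] use smoothness of \((m,\Sigma)\mapsto S_\varepsilon\) together with the bound \(\|\Gamma-\Sigma\|\lesssim W_2(\mu,\nu)\) for nearby covariances.
\end{itemize}
I prefer (i), since it avoids nonsmoothness of \(\Sigma^{1/2}\) at singular \(\Sigma\). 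The only point to check is that \(t\mapsto\Sigma_t\) is a quadratic polynomial, so the derivative exists; \(\mathcal B_\varepsilon(\cdot,\Sigma_\star)\) is differentiable on \(\mathbb S^d_+\) in the needed one-sided sense. Given this, for any \(\eta\in\partial_W\iota_{\mathcal G}(\mu)\) the defining inequalities add, using a common plan. I will need to ensure that one plan works for both inequalities. For \(\iota_{\mathcal G}\), the subdifferential inequality restricted to Gaussians holds for every optimal plan if it holds for one, since the pairing depends only on \(\pi\) through quantities equal among Gaussian optimal plans. This point is the delicate one.

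\emph{The reverse inclusion.} Conversely, if \(\zeta\in\partial_WE_{\mu_\star}(\mu)\), then \(\zeta-\xi_\mu\in\partial_W\iota_{\mathcal G}(\mu)\). To see this, subtract the smooth part: by the step above, \(S_\varepsilon(\nu,\mu_\star)-S_\varepsilon(\mu,\mu_\star)\) also satisfies the reverse inequality \(\le\int\langle\xi_\mu,y-x\rangle\dd\pi+O(W_2^2)\) along Gaussians, by smoothness/upper bound of the second derivative on bounded sets.

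\emph{Minimal selection.} For \(\partial^\circ_W\), I would show \(\xi_\mu\perp\partial_W\iota_{\mathcal G}(\mu)\)-type minimality: for \(\eta\in\partial_W\iota_{\mathcal G}(\mu)\), test \(\nu=(\id+h\phi)_\#\mu\) with \(\phi\) affine with symmetric linear part and \(h\to0^\pm\). These maps are optimal for small \(h\) and keep \(\nu\) Gaussian, which yields \(\int\langle\eta,\phi\rangle\dd\mu=0\) for all such \(\phi\). Since \(\xi_\mu\) is itself of this form, \(\langle\eta,\xi_\mu\rangle_{L^2_\mu}=0\), hence \(\|\xi_\mu+\eta\|^2=\|\xi_\mu\|^2+\|\eta\|^2\ge\|\xi_\mu\|^2\), with equality iff \(\eta=0\) in \(L^2_\mu\). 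I expect the main obstacle to be the handling of singular \(\mu\), where optimal plans need not be maps and the one-sided derivative must be justified; the sign constraints on \(\phi\) directions tangent to \(\supp\mu\) also need care there.
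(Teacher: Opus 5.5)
Your architecture matches the paper's: a two-sided first-order expansion of \(S_\varepsilon(\cdot,\mu_\star)\) along Gaussian geodesics, subtraction to get the sum rule, orthogonality of \(\partial_W\iota_{\mathcal G}(\mu)\) to affine fields with symmetric linear part, and Pythagoras. Taking the lower bound from \cref{thm:ggcvx} and the upper bound from a Hessian bound is a harmless variant of the paper's estimate via continuity of \(\Sigma\mapsto G_\Sigma\). Your worry about \(\Sigma^{1/2}\) is unfounded: \(\mathcal B_\varepsilon(\cdot,\Sigma_\star)\) involves only \(\varepsilon\)-regularized spectral functions of \(\Sigma_\star^{1/2}\Sigma\Sigma_\star^{1/2}\) and \(\Sigma^2\), so it is analytic.

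\textbf{The gap.} The step you call delicate is settled by a false claim. The pairing \(\int\langle\eta(x),y-x\rangle\dd\pi\) is \emph{not} the same for all Gaussian optimal plans. First, \(\eta\in\partial_W\iota_{\mathcal G}(\mu)\) is a general, typically non-affine, field in \(L^2_\mu\). Second, for singular \(\mu\) even Gaussian optimal plans can have different cross-covariances. For \(\mu=\mathcal N(0,e_1e_1^T)\), \(\nu=\mathcal N(0,e_2e_2^T)\) in \(\bR^2\), every coupling is optimal, including Gaussian ones with cross-covariance \(\rho\, e_1e_2^T\) for any \(\rho\in[-1,1]\). Your first display hides where the plan-dependence actually lives. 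With \(C\) the cross-covariance of \(\pi\), one has \(\int\langle\xi_\mu(x),y-x\rangle\dd\pi=2\langle m-m_\star,n-m\rangle+\Tr(G_\Sigma(C-\Sigma))\). The cross terms do not cancel. This differs from \(\frac12\Tr(G_\Sigma(\Gamma-\Sigma))\) by \(-\frac12\Tr\bigl(G_\Sigma(\Sigma+\Gamma-C-C^T)\bigr)=O(W_2^2(\mu,\nu))\), which is harmless at first order but depends on \(\pi\).

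\textbf{The repair goes the other way.} Because \(\xi_\mu\) is affine, its pairing with \(\pi\) depends only on the first two moments of \(\pi\). The Gaussian coupling with those moments has the same cost, so it is again optimal. Hence your two-sided expansion, proved along Gaussian plans, holds with the same remainders for \emph{every} \(\pi\in\Pi_{\mathrm o}(\mu,\nu)\). You can then use whichever plan witnesses the inequality for \(\eta\) (first inclusion) or for \(\zeta\in\partial_WE_{\mu_\star}(\mu)\) (reverse inclusion). The paper's ``substituting in \eqref{eq:gradSeps}'' tacitly needs exactly this.

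\textbf{Alternative repair, which also closes your open point.} Only \(W_2(\mu,\nu)\to0\) matters, and for Gaussian \(\nu\) close to \(\mu\) the optimal plan is unique. After centering, let \(Q\) be the orthogonal projector onto \(\mathrm{range}(\Sigma)\). Any optimal \(\pi\) must couple \(\mu\) with \(Q_\#\nu\) through the Brenier map \(M\) on \(\mathrm{range}(\Sigma)\), which is unique since \(\mu\) has a density there. When \(\Gamma\) is close to \(\Sigma\), \(M\) is invertible, so \(x=M^{-1}Qy\) \(\pi\)-a.s.

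This uniqueness settles the point you leave open in the minimal-selection step. Optimality of \((\id,\id+h\phi)_\#\mu\) is not enough by itself, because the definition only provides \emph{some} optimal plan for \(\nu_h=(\id+h\phi)_\#\mu\). Uniqueness, or \cite[Proposition 8.4.6]{ambrosio2008gradient} as used in the paper (it controls every choice of optimal plans), is what yields \(\int\langle\eta,\phi\rangle\dd\mu=0\). No sign constraints arise, since both signs of \(h\) keep \(\nu_h\) Gaussian.
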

\begin{proof}
We denote \(\mu = \mathcal N(m, \Sigma)\) and \(\nu=\mathcal N(n, \Sigma)\). We first assume \(m = n = m_\star =  0\) and we shall deal with the uncentered case afterwards. Let \(\pi\in\Pi_{\mathrm o}(\mu,\nu)\) chosen to be Gaussian, and \(\mu_t \coloneqq ((1-t)\mathbf p_1+t\mathbf p_2)_\#\pi\) the corresponding geodesic, which is a Gaussian distribution of covariance \(\Sigma_t = \int ((1-t)x + ty)((1-t)x + ty)^T\dd \pi(x, y)\). Thus \(\frac{\dd}{\dd t}S_\varepsilon(\mu_t, \mu_\star) = \Tr(\dot\Sigma_t\tfrac12 G_{\Sigma_t})\) with \(\dot \Sigma_t = \int (((1-t)x + ty)(y-x)^T + (y-x)((1-t)x + ty)^T)d\pi(x, y)\), yielding
\begin{equation*}
\frac{\dd}{\dd t}S_\varepsilon(\mu_t, \mu_\star) = \int \langle G_{\Sigma_t}((1-t)x+ty), y-x\rangle \dd\pi(x,y).
\end{equation*}
Now consider that
\begin{align*}
\left|\frac{\dd}{\dd t} S_\varepsilon(\mu_t, \mu_\star)-\int \langle G_{\Sigma}x, y-x\rangle \dd\pi(x,y)\right| \leq & \int|\langle G_{\Sigma_t}((1-t)x+ty) - G_{\Sigma_t}x, y-x\rangle|\dd \pi(x, y)\\
&+\int|\langle G_{\Sigma_t}x - G_{\Sigma}x, y-x\rangle|\dd \pi(x, y).
\end{align*}
Using Cauchy-Schwarz and the definition of the operator norm, the first term is bounded by \(t \|G_{\Sigma_t}\|_{\mathrm{op}}W_2^2(\mu, \nu)\) and the second by \(\|G_{\Sigma_t} - G_{\Sigma}\|_{\mathrm{op}}\sqrt{\Tr(\Sigma)}W_2(\mu, \nu)\). From \cref{lem:Gbounded}, \(\|G_{\Sigma_t}\|_{\mathrm{op}}\) is bounded, and writing \(\Delta(\mu, \nu) \coloneqq S_\varepsilon(\nu, \mu_\star) - S_\varepsilon(\mu, \mu_\star) - \int \langle G_{\Sigma}x, y-x\rangle \dd \pi(x,y)\) we get
\begin{align*}
|\Delta(\mu, \nu)| &\leq \int_0^1\left|\frac{\dd}{\dd t}S_\varepsilon(\mu_t, \mu_\star) - \int \langle G_{\Sigma}x, y-x\rangle \dd \pi(x,y)\right|\dd t \\
&\leq C_1W_2^2(\mu, \nu) + C_2 W_2(\mu,\nu)\int_0^1\|G_{\Sigma_t} - G_{\Sigma}\|_{\mathrm{op}}\dd t
\end{align*}
where \(C_1, C_2\) are positive constants. The integral in the last term goes to 0 as \(\Gamma\to\Sigma\) by the dominated convergence theorem and the continuity of \(\Sigma' \mapsto G_{\Sigma'}\), whence \(\Delta(\mu, \nu) = o(W_2(\mu, \nu))\) and thus
\begin{equation*}
S_\varepsilon(\nu, \mu_\star) - S_\varepsilon(\mu, \mu_\star) = \int \langle G_{\Sigma}x, y-x\rangle \dd \pi(x,y) + o(W_2(\mu, \nu)).
\end{equation*}
We now extend the proof to uncentered measures. Let \(\bar\mu, \bar \nu\) be the centered counterparts of \(\mu, \nu\). Recall that \(W_2^2(\mu,\nu) = \|m-n\|^2 + W_2^2(\bar\mu, \bar\nu)\), and that the transport plan \(\bar\pi = (\mathbf p_1 + m, \mathbf p_2 + n)_\#\pi\) is optimal between \(\bar\mu\) and \(\bar\nu\). Using the previous estimate as well as a Taylor expansion of \(m'\mapsto\|m'-m_\star\|^2\) at \(m\), we get
\begin{align*}
S_\varepsilon(\nu, \mu_\star) - S_\varepsilon(\mu, \mu_\star) =& \int \langle G_{\Sigma}(x-m), y-x\rangle \dd \bar\pi(x,y) + o(W_2(\bar\mu, \bar\nu))  \\&+\langle 2(m-m_\star), n-m\rangle + o(\|m-n\|).
\end{align*}
By Cauchy-Schwarz we get \(o(W_2(\bar\mu, \bar\nu)) + o(\|m-n\|) = o(W_2(\mu, \nu))\), therefore 
\begin{align*}
S_\varepsilon(\nu, \mu_\star) - S_\varepsilon(\mu, \mu_\star) =& \int \langle G_{\Sigma}(x-m), y-n-x+m\rangle \dd \pi(x,y) \\
&+  \int\langle 2(m-m_\star), y-x\rangle \dd\pi(x, y)+ o(W_2(\mu, \nu))\\
=& \int \langle G_{\Sigma}(x-m)+2(m-m_\star), y- x\rangle \dd \pi(x,y)\\
&+ \int \langle G_{\Sigma}(x-m), m-n\rangle \dd \pi(x,y) + o(W_2(\mu, \nu))
\end{align*}
and since \(\int\langle G_{\Sigma}(x-m),m-n\rangle \dd \pi(x,y) = 0\), we get
\begin{equation}\label{eq:gradSeps}
S_\varepsilon(\nu, \mu_\star) - S_\varepsilon(\mu, \mu_\star) =\int \langle \nabla(f_{\mu, \mu_\star} - f_{\mu})(x), y- x\rangle \dd \pi(x,y) + o(W_2(\mu,\nu)).
\end{equation}

Next, we have that \(\xi \in \partial_W E_{\mu_\star}(\mu)\) if and only if for all \(\nu\in\mathcal G\), there exists an optimal transport plan \(\pi\) between \(\mu\) and \(\nu\) such that \(S_\varepsilon(\nu, \mu_\star) - S_\varepsilon(\mu, \mu_\star) \geq \int\langle\xi(x),y-x\rangle \dd \pi(x,y) + o(W_2(\mu, \nu))\). By substituting in \eqref{eq:gradSeps}, this last inequality is equivalent to
\begin{equation*}
0 \geq \int\langle\xi(x)-\nabla(f_{\mu, \mu_\star} - f_{\mu})(x),y-x\rangle \dd \pi(x,y) + o(W_2(\mu, \nu)),
\end{equation*}
i.e.~precisely \(\xi -\nabla(f_{\mu, \mu_\star} - f_{\mu}) \in \partial_W \iota_{\mathcal G}(\mu)\).
Now take \(\zeta \in \partial_W\iota_{\mathcal G}(\mu)\), \(v\in L^2_\mu(\bR^d)\) an affine vector field, and let \(\mu_t = (\id + tv)_\# \mu\). This defines an absolutely continuous curve such that \(\dot \mu_t = -\div(\mu_t v)\) for all \(t\). There exists \(\pi_t\) an optimal transport plan between \(\mu\) and \(\mu_t\) such that \(\int \langle\zeta(x), y-x\rangle \dd\pi_t(x, y) + o(W_2(\mu, \mu_t)) \leq 0\). We have \(W_2(\mu, \mu_t) = O(t)\) and \(\int \langle \zeta(x), \frac{y-x}{t}\rangle \dd\pi_t(x) \to \int \langle \zeta(x), v(x)\rangle \dd \mu(x)\) by \cite[Proposition 8.4.6]{ambrosio2008gradient}. Thus \[\int \left\langle \zeta(x), \frac{y-x}{t}\right\rangle \dd\pi_t(x) + o(1) = \int \langle \zeta(x), v(x)\rangle \dd \mu(x) + o(1) \leq 0\] i.e.~\(\langle \zeta, v\rangle_{L^2_\mu(\bR^d)}\leq 0\).
Using the same reasoning with \(-v\) yields \(\langle \zeta, v\rangle_{L^2_\mu(\bR^d)}= 0\). The minimal selection follows from \(\nabla(f_{\mu, \mu_\star} - f_{\mu})\) being affine, yielding \(\|\nabla(f_{\mu,\mu_\star} - f_\mu) + \zeta\|^2_{L^2_\mu(\bR^d)} = \|\nabla(f_{\mu,\mu_\star} - f_\mu)\|_{L^2_\mu(\bR^d)}^2 + \|\zeta\|_{L^2_\mu(\bR^d)}^2\) which is minimized for \(\zeta = 0\in\partial_W \iota_{\mathcal G}(\mu)\).
\end{proof}
\subsection{Characterization as the Wasserstein gradient flow of the Sinkhorn divergence}\label{sec:wellposed:WGFcharacterization}
Having proven the existence of solutions to \eqref{eq:SWGF}, we now make sure they are indeed Wasserstein gradient flows of the functional \(S_\varepsilon(\cdot, \mu_\star)\) in the general sense (so far we have obtained solutions as gradient flows of \(E_{\mu_\star}\) which is constrained to \(\mathcal G\)). To do so, the only thing to prove is that \( \nabla(f_{\mu,\mu_\star} - f_\mu) \) is a Wasserstein subdifferential of the unconstrained functional.
\begin{proposition}\label{prop:subdiff-Seps-G}
For any \(\mu\in\mathcal G\), \(\nabla(f_{\mu,\mu_\star} - f_{\mu})\) is a Wasserstein subdifferential of \(S_\varepsilon(\cdot, \mu_\star)\) at \(\mu\).
\end{proposition}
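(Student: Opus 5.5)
The plan is to avoid differentiating \(S_\varepsilon(\cdot,\mu_\star)\) along curves, since \(\nu\) need not be Gaussian. Instead I would establish a global first-order lower bound of the form
\begin{equation*}
S_\varepsilon(\nu,\mu_\star) - S_\varepsilon(\mu,\mu_\star) \geq \int (f_{\mu,\mu_\star} - f_\mu)\,\dd(\nu-\mu) - C\,W_2^2(\mu,\nu)
\end{equation*}
for every \(\nu\in\mP_2(\bR^d)\), with \(C\) depending only on \(\mu,\mu_\star,\varepsilon\). I would then convert the linear term into the transport form required by \eqref{eq:defsubdiff}. Throughout, I use the explicit quadratic potentials of \cref{thm:Seps-gauss}. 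These can be evaluated at every point of \(\bR^d\), and they satisfy the Schrödinger system \eqref{eq:schrodinger-syst} pointwise, not only on the supports. In particular \(\int \exp(\tfrac1\varepsilon(f_{\mu,\mu_\star}(x)+g_{\mu,\mu_\star}(y)-c(x,y)))\dd\mu_\star(y) = 1\) for every \(x\in\bR^d\).

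\emph{Cross term.} Fix \(\nu\) and any \(\pi\in\Pi(\nu,\mu_\star)\) with finite entropy. I apply the Fenchel--Young inequality \(ab\le e^{a}-1 + b\log b - b + 1\) to the density \(\frac{\dd\pi}{\dd(\nu\otimes\mu_\star)}\) with \(a=\tfrac1\varepsilon(f_{\mu,\mu_\star}\oplus g_{\mu,\mu_\star}-c)\). This gives weak duality with the fixed (unbounded, quadratic) potentials. Because these potentials grow at most quadratically and \(\nu\in\mP_2\), every integral is finite. By the pointwise normalization above, the exponential penalty integrates to zero against \(\nu\otimes\mu_\star\). Hence
\begin{equation*}
\OT_\varepsilon(\nu,\mu_\star) \ge \int f_{\mu,\mu_\star}\dd\nu + \int g_{\mu,\mu_\star}\dd\mu_\star = \OT_\varepsilon(\mu,\mu_\star) + \int f_{\mu,\mu_\star}\dd(\nu-\mu).
\end{equation*}

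\emph{Self term.} For \(-\tfrac12\OT_\varepsilon(\nu,\nu)\) the sign is unfavorable, and this is where I expect the main obstacle. I would follow the argument of \cite{feydy2019interpolating}: use the dual formula for \(\OT_\varepsilon(\nu,\nu)\) evaluated at \(f_\nu\), and compare it with the one evaluated at \(f_\mu\), using concavity of the dual objective. This should yield
\begin{equation*}
-\tfrac12\OT_\varepsilon(\nu,\nu) \ge -\tfrac12\OT_\varepsilon(\mu,\mu) - \int f_\mu\,\dd(\nu-\mu) - \tfrac\varepsilon2\iint k_\mu\,\dd(\nu-\mu)\otimes(\nu-\mu),
\end{equation*}
with kernel \(k_\mu(x,y) = \exp(\tfrac1\varepsilon(f_\mu(x)+f_\mu(y)-\|x-y\|^2))\). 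The task is then to show that the last term is \(O(W_2^2(\mu,\nu))\). I would write it as \(\int\!\!\int (k_\mu(x,y)-k_\mu(x',y)-k_\mu(x,y')+k_\mu(x',y'))\dd\gamma(x,x')\dd\gamma(y,y')\) with \(\gamma\in\Pi_{\mathrm o}(\nu,\mu)\). I would then bound the mixed difference by \(\sup|\partial_x\partial_y k_\mu|\,\|x-x'\|\,\|y-y'\|\), which integrates to at most \(C W_1^2\le C W_2^2\). This requires \(\partial_x\partial_y k_\mu\) to be bounded. Since \(f_\mu\) is quadratic with matrix \(F_\mu^\mu\), whose eigenvalues lie in \([0,1]\) by \cref{lem:Gbounded}, the exponent equals \(-\tfrac1\varepsilon(\langle x,(\id-F)x\rangle + \langle y,(\id-F)y\rangle - 2\langle x,y\rangle)\) up to affine terms. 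Showing that this exponent is concave enough to make \(k_\mu\) and its derivatives bounded is the delicate point. If boundedness fails in degenerate directions, which happens when \(\Sigma\) is singular and \(F\) has eigenvalue \(1\), I would first try to handle those directions separately, or fall back on a moment-truncation argument that gives an \(o(W_2)\) error instead of \(O(W_2^2)\).

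\emph{Conclusion.} Adding the two bounds, the constant terms cancel and we obtain the displayed global lower bound with \(\varphi \coloneqq f_{\mu,\mu_\star}-f_\mu\), which is a quadratic function with Hessian \(A = G_\Sigma\). For any \(\pi\in\Pi_{\mathrm o}(\mu,\nu)\), Taylor's formula is exact for quadratics:
\begin{equation*}
\int\varphi\,\dd(\nu-\mu) = \int (\varphi(y)-\varphi(x))\dd\pi = \int\langle\nabla\varphi(x),y-x\rangle\dd\pi + \tfrac12\int\langle y-x,A(y-x)\rangle\dd\pi.
\end{equation*}
The last term is bounded below by \(-\tfrac12\|A\|_{\mathrm{op}}W_2^2(\mu,\nu)\), and \(\|A\|_{\mathrm{op}}\) is finite by \cref{lem:Gbounded}. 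Hence the right-hand side of \eqref{eq:defsubdiff} holds with error \(-C'W_2^2 = o(W_2)\), and \(\nabla\varphi\) is a Wasserstein subdifferential of \(S_\varepsilon(\cdot,\mu_\star)\) at \(\mu\).
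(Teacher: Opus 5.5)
There is a genuine gap, and it sits in the self term. Your cross-term bound (weak duality with the fixed quadratic potentials plus the pointwise normalization) is correct. So is your final exact Taylor step. Both coincide with what the paper does.

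\textbf{Why the self-term step fails.} First, the mechanism you describe does not produce the inequality you state. Since \(\OT_\varepsilon(\nu,\nu)\) is the \emph{maximum} of \(D_\nu(f)=2\int f\dd\nu-\varepsilon\iint(e^{(f\oplus f-c)/\varepsilon}-1)\dd(\nu\otimes\nu)\), plugging in \(f_\mu\) and using \(\int k_\mu(x,y)\dd\mu(y)=1\) gives
\[
\OT_\varepsilon(\nu,\nu)\ge \OT_\varepsilon(\mu,\mu)+2\int f_\mu\dd(\nu-\mu)-\varepsilon\iint k_\mu\dd(\nu-\mu)^{\otimes 2}.
\]
This is a \emph{lower} bound on \(\OT_\varepsilon(\nu,\nu)\), the reverse of what you need. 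Comparing instead with \(D_\mu(f_\nu)\le\OT_\varepsilon(\mu,\mu)\) gives your inequality with \(f_\nu,k_\nu\) in place of \(f_\mu,k_\mu\). That would require stability of self-potentials at non-Gaussian \(\nu\), which is unavailable.

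Second, even granting your inequality, its remainder can never be \(O(W_2^2)\), nor even \(o(W_2)\). The eigenvalues of \(F_\Sigma^\Sigma\) are \(1-\lambda/(\sqrt{\lambda^2+\tilde\varepsilon^2}+\tilde\varepsilon)\in(0,1]\), hence strictly positive. So \(k_\mu(x,x)=e^{2f_\mu(x)/\varepsilon}\) blows up quadratically-exponentially in \emph{every} direction, not only degenerate ones, and \(\partial_x\partial_y k_\mu\) is unbounded. Now take \(\nu_\delta=(1-\delta)\mu+\delta\,\mathcal N(m,s\id)\) with \(s>\varepsilon/(2\lambda_{\max}(F_\Sigma^\Sigma))\). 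Then \(\iint k_\mu\dd\mathcal N(m,s\id)^{\otimes2}=+\infty\), hence \(\iint k_\mu\dd(\nu_\delta-\mu)^{\otimes2}=+\infty\) for every \(\delta>0\). Meanwhile \(W_2^2(\mu,\nu_\delta)\le\delta W_2^2(\mu,\mathcal N(m,s\id))\to0\). No truncation argument can rescue this.

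\textbf{The missing idea.} The self term satisfies the tangent inequality with \emph{no remainder}, because \(\nu\mapsto-\frac12\OT_\varepsilon(\nu,\nu)\) is convex in the flat sense \cite{feydy2019interpolating}. This comes from the positive definiteness of \(K(x,y)=e^{-\|x-y\|^2/\varepsilon}\). Substituting \(m=e^{f/\varepsilon}\nu\) in the symmetric dual gives
\[
-\tfrac12\OT_\varepsilon(\nu,\nu)=\inf_{m}\Big\{\varepsilon\int\log\tfrac{\dd\nu}{\dd m}\dd\nu+\tfrac\varepsilon2\iint K\dd(m\otimes m)\Big\}-\tfrac\varepsilon2 .
\]
Bound the entropy term by Fenchel--Young: \(\varepsilon\int\log\frac{\dd\nu}{\dd m}\dd\nu\ge\int(\varepsilon-f_\mu)\dd\nu-\varepsilon\int e^{-f_\mu/\varepsilon}\dd m\). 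Bound the quadratic term by its tangent at \(m_\mu=e^{f_\mu/\varepsilon}\mu\), discarding \(\frac\varepsilon2\iint K\dd(m-m_\mu)^{\otimes2}\ge0\). The pointwise Schr\"odinger equation \(\int K(x,y)\dd m_\mu(y)=e^{-f_\mu(x)/\varepsilon}\) makes all \(m\)-dependent terms cancel. This leaves \(-\frac12\OT_\varepsilon(\nu,\nu)\ge-\frac12\OT_\varepsilon(\mu,\mu)-\int f_\mu\dd(\nu-\mu)\) for every \(\nu\).

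The discarded remainder pairs the \emph{bounded} kernel \(K\) with \(m-m_\mu\), whereas yours pairs the unbounded \(k_\mu\) with \(\nu-\mu\). That is why no estimate of it is needed. Adding your cross-term bound gives the remainder-free inequality \(S_\varepsilon(\nu,\mu_\star)-S_\varepsilon(\mu,\mu_\star)\ge\int(f_{\mu,\mu_\star}-f_\mu)\dd(\nu-\mu)\). This is exactly what the paper takes from the proof of \cite[Proposition 4]{janati2020debiased}. Your Taylor step then concludes as in the paper, with constant \(\lambda_{\max}(\Sigma_\star)/(2\tilde\varepsilon)\).
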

\begin{proof}
In the proof of \cite[Proposition 4]{janati2020debiased} it is shown that for any \(\nu\in\mathcal P(\bR^d)\), \(S_\varepsilon(\nu, \mu_\star) - S_\varepsilon(\mu, \mu_\star) \geq \int (f_{\mu, \mu_\star} - f_{\mu})\dd(\nu-\mu)\) (the assumption of subgaussianity made in that paper is not necessary for this inequality, which only relies on the suboptimality of well-chosen combinations of the cross and self transport potentials in the dual EOT problem). Taking an optimal transport plan \(\pi\in \Pi_{\mathrm o}(\mu, \nu)\) and writing \((\mu_t)_{t\in[0, 1]}\) the corresponding geodesic, the previous lower bound reads \(\int (f_{\mu, \mu_\star} - f_{\mu})\dd(\nu-\mu) = \int_0^1\left[\frac\dd{\dd t}\int (f_{\mu, \mu_\star} - f_{\mu})\dd\mu_t\right]\dd t\), where we can compute
\begin{equation*}
\frac\dd{\dd t}\int (f_{\mu, \mu_\star} - f_{\mu})\dd\mu_t = \int\langle\nabla(f_{\mu, \mu_\star} - f_{\mu})((1-t)x+ty), y - x\rangle \dd\pi(x, y).
\end{equation*}
Writing \(\mu = \mathcal N(m, \Sigma)\) and \(b \coloneqq G_{\Sigma}m + 2(m - m_\star)\) so that \(\nabla(f_{\mu, \mu_\star} - f_{\mu})(x) = G_{\Sigma}x + b\) for conciseness, by integrating the previous equation with respect to \(t\) we get
\begin{equation*}
\int (f_{\mu, \mu_\star} - f_{\mu})\dd(\nu-\mu) = \int\langle G_{\Sigma}x+ b, y - x\rangle \dd\pi(x, y) + \frac12\int\langle G_{\Sigma}(y-x), y - x\rangle \dd\pi(x, y).
\end{equation*}
Using \cref{lem:Gbounded} and injecting in the first inequality of this proof, we get
\begin{equation*}
S_\varepsilon(\nu, \mu_\star) - S_\varepsilon(\mu, \mu_\star) \geq \int\langle \nabla(f_{\mu, \mu_\star} - f_{\mu})(x), y - x\rangle \dd\pi(x, y) - \frac{\lambda_{\mathrm{max}}(\Sigma_\star)}{2\tilde{\varepsilon}}W_2^2(\mu,\nu)
\end{equation*}
which allows us to conclude.
\end{proof}

\subsection{EVI and uniqueness for a class of regular measures} \label{sec:wellposed:EVI}
Considering that \(\lambda\)-convexity of the Sinkhorn divergence is challenging to obtain on \(\mP_2(\bR^d)\), we prove uniqueness on a subclass \(\mathcal R\) of regular measures, with strongly concave and smooth log densities as defined below. 
This class contains all non-singular Gaussian measures (but not singular ones), and is a subset of the subgaussian measures.
\begin{definition}\label{def:R}
We define \(\mathcal R\) as the class of probability measures \(\mu\in\mP_2(\bR^d)\) having a Lebesgue density proportional to \(\exp(-V)\) where \(V \colon \bR^d \to \bR \) is twice continuously differentiable and such that there exist \(\alpha_\mu, \beta_\mu >0\) such that \(\alpha_\mu \id \preceq \nabla^2V \preceq \beta_\mu \id\).
\end{definition}

We start with a result akin to \cref{prop:subdiff-Seps-G}, but on \(\mathcal R\) rather than \(\mathcal G\), which will allow us to state an EVI \eqref{eq:EVIdef} to obtain uniqueness on \(\mathbb R\).
\begin{proposition}\label{prop:subdiff-R}
For \(\Sigma_\star \in \mathbb S^d_{++}\) and \(\mu\in\mathcal R\), for all \(\nu\in\mathcal P_2(\bR^d)\), 
\begin{equation*}
S_\varepsilon(\nu, \mu_\star) - S_\varepsilon(\mu, \mu_\star) \geq \int\langle \nabla(f_{\mu, \mu_\star} - f_{\mu})(x), y - x\rangle \dd\pi(x, y) - \frac{\lambda_{\max}(\Sigma_\star)}{2\tilde\varepsilon}W_2^2(\mu,\nu).
\end{equation*}
\end{proposition}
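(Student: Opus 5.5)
The plan is to follow the structure of the proof of \cref{prop:subdiff-Seps-G}. I will replace the explicit Gaussian computation of the Hessian of the first variation by a general Hessian bound on Schrödinger potentials. Write \(h \coloneqq f_{\mu,\mu_\star} - f_\mu\).

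\textbf{Step 1: first-order lower bound.} As in \cref{prop:subdiff-Seps-G}, I start from the inequality extracted from the proof of \cite[Proposition 4]{janati2020debiased}:
\begin{equation*}
S_\varepsilon(\nu,\mu_\star) - S_\varepsilon(\mu,\mu_\star) \geq \int h \,\dd(\nu-\mu).
\end{equation*}
It uses only suboptimality of combinations of the cross and self potentials in the dual problem. For it to make sense I need \(h\in L^1(\nu)\) for every \(\nu\in\mP_2(\bR^d)\). Since \(\mu\in\mathcal R\) and \(\mu_\star\) are subgaussian, the potentials are finite and smooth, and they have at most quadratic growth (they are \(-\varepsilon\log\int\exp\) of quadratics against subgaussian measures). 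This gives the required integrability.

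Then I take any \(\pi\in\Pi_{\mathrm o}(\mu,\nu)\) and write \(\int h\,\dd(\nu-\mu) = \int (h(y)-h(x))\,\dd\pi(x,y)\). The whole claim then reduces to the pointwise semiconvexity estimate
\begin{equation*}
h(y)-h(x) \geq \langle\nabla h(x), y-x\rangle - \frac{\lambda_{\max}(\Sigma_\star)}{2\tilde\varepsilon}\|y-x\|^2 .
\end{equation*}
This follows from \(\nabla^2 h \succeq -\frac{\lambda_{\max}(\Sigma_\star)}{\tilde\varepsilon}\id\) by Taylor's formula along the segment \([x,y]\).

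\textbf{Step 2: Hessian formula for potentials.} Using the Schrödinger system \eqref{eq:schrodinger-syst}, \(f_{\mu,\nu} = T_\varepsilon(g_{\mu,\nu},\nu)\). Differentiating twice under the integral, and recalling \(\varepsilon = 4\tilde\varepsilon\), gives
\begin{equation*}
\nabla^2 f_{\mu,\nu}(x) = 2\id - \tfrac{1}{\tilde\varepsilon}\mathrm{Cov}(\pi^x_{\nu}),
\end{equation*}
where \(\dd\pi^x_\nu(y)\propto \exp(\tfrac1\varepsilon(g_{\mu,\nu}(y)-\|x-y\|^2))\dd\nu(y)\) is the conditional law of the entropic plan. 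Differentiation under the integral is justified by the growth bounds from Step 1. Two consequences follow.
\begin{itemize}
\item Applied to the self potential, this gives \(\nabla^2 f_\mu \preceq 2\id\).
\item Applied to \(g\) (by symmetry), it gives \(\nabla^2 g_{\mu,\mu_\star}\preceq 2\id\).
\end{itemize}
Hence \(\nabla^2 h \succeq -\frac1{\tilde\varepsilon}\mathrm{Cov}(\pi^x_{\mu_\star})\), and it remains to show \(\mathrm{Cov}(\pi^x_{\mu_\star})\preceq \Sigma_\star\).

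\textbf{Step 3: covariance bound (main obstacle).} The density of \(\pi^x_{\mu_\star}\) is the Gaussian density of \(\mu_\star\) multiplied by \(e^{\phi}\), where
\begin{equation*}
\phi(y) = \tfrac1\varepsilon\big(g_{\mu,\mu_\star}(y)-\|x-y\|^2\big), \qquad \nabla^2\phi = \tfrac1\varepsilon\big(\nabla^2 g_{\mu,\mu_\star}-2\id\big)\preceq 0 .
\end{equation*}
This is where \(\Sigma_\star\in\mathbb S^d_{++}\) is needed: the measure \(\pi^x_{\mu_\star}\) then has density \(e^{-W}\) with \(\nabla^2 W\succeq \Sigma_\star^{-1}\). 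The Brascamp--Lieb inequality, applied to linear test functions \(y\mapsto\langle u,y\rangle\), gives \(\mathrm{Cov}(\pi^x_{\mu_\star})\preceq \Sigma_\star\). Therefore \(\nabla^2 h\succeq -\frac{\lambda_{\max}(\Sigma_\star)}{\tilde\varepsilon}\id\), and integrating the pointwise estimate of Step 1 against \(\pi\) concludes.

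I expect the main technical burden to be the regularity rather than the convexity argument. Three things must be controlled on \(\mathcal R\):
\begin{itemize}
\item the potentials are \(C^2\), with differentiation under the integral legitimate;
\item they have quadratic growth, so that \(h\in L^1(\nu)\) for arbitrary \(\nu\in\mP_2(\bR^d)\);
\item the inequality of \cite{janati2020debiased} holds without a subgaussian assumption on \(\nu\).
\end{itemize}
For the first two, I would try using the bounds \(\alpha_\mu\id\preceq\nabla^2V\preceq\beta_\mu\id\) to sandwich the conditional laws \(\pi^x_\mu\) between Gaussians. This controls their moments uniformly in \(x\), and hence the growth of \(f_\mu\), \(f_{\mu,\mu_\star}\) and their derivatives.
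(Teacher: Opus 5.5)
Your proof is correct. Step 1 (the dual suboptimality inequality from \cite{janati2020debiased}, then a Taylor expansion along the segment $[x,y]$) is the same as in the paper, but you obtain the key bound $\nabla^2(f_{\mu,\mu_\star}-f_\mu)\succeq -\frac{\lambda_{\max}(\Sigma_\star)}{\tilde\varepsilon}\id$ by a different route.

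The paper invokes \cite[Theorem 8]{chewi2023entropic}. That result needs both $\mu$ and $\mu_\star$ strongly log-concave and log-smooth, and gives a bound depending on $\alpha_\mu,\beta_\mu,\alpha_{\mu_\star}$. The paper then drops the nonnegative $\alpha_\mu$-term, and uses that $x\mapsto x(1-\sqrt{1+a/x})$ decreases to $-a/2$ to remove $\beta_\mu$.

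You instead use the representation $\nabla^2 f = 2\id-\tilde\varepsilon^{-1}\mathrm{Cov}(\pi^x)$. This disposes of the self-potential for free ($\nabla^2 f_\mu\preceq 2\id$). It also shows the conditional law of the cross plan is a log-concave perturbation of $\mu_\star$, so Brascamp--Lieb gives $\mathrm{Cov}(\pi^x_{\mu_\star})\preceq\Sigma_\star$. This is the covariance-inequality mechanism behind the cited theorem, specialized to what is needed. It buys you three things:
\begin{enumerate}
\item a sharper matrix bound, $\nabla^2 h\succeq-\Sigma_\star/\tilde\varepsilon$;
\item a transparent reason why the constant does not depend on $\mu$, and why it agrees with \cref{lem:Gbounded} in the Gaussian case;
\item the observation that $\beta_\mu$ plays no role. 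Of $\mu\in\mathcal R$ you only use full support, for smoothness of the potentials, and $\alpha_\mu$. The latter enters via $\mathrm{Cov}(\pi^x_\mu)\preceq\alpha_\mu^{-1}\id$, which gives $\nabla^2 h\preceq(\tilde\varepsilon\alpha_\mu)^{-1}\id$. Hence $h$ has quadratic growth and $h\in L^1(\nu)$ for every $\nu\in\mP_2(\bR^d)$.
\end{enumerate}

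The price is that regularity, which the paper leaves inside the citation, is now yours to check. Your stated justification (``$-\varepsilon\log\int\exp$ of quadratics'') is not accurate, since $g_{\mu,\mu_\star}$ is not a priori quadratic. The fix is easy. The Laplace-type integrals defining the potentials are finite almost everywhere, and their domain of finiteness is convex. Since $\mu$ and $\mu_\star$ both have full support, these integrals are therefore finite and real-analytic everywhere. That legitimizes differentiation under the integral, after which the two-sided Hessian bounds give the growth.
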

\begin{proof}
For any \(\nu\in\mP_2(\bR^d)\), as in the previous proof we take \((\mu_t)_{t\in[0, 1]}\) a geodesic corresponding to a plan \(\pi \in \Pi_{\mathrm o}(\mu, \nu)\) and write
\begin{equation*}
S_\varepsilon(\nu, \mu_\star) - S_\varepsilon(\mu, \mu_\star) \geq  \int_0^1\int\langle\nabla (f_{\mu, \mu_\star}-f_{\mu})(x_t), y - x\rangle \dd\pi(x, y)\dd t,
\end{equation*}
where in the integral we write \(x_t = (1-t)x + t y\) for conciseness. Now write \begin{equation*}
\nabla (f_{\mu, \mu_\star}-f_{\mu})(x_t) = \nabla (f_{\mu, \mu_\star}-f_{\mu})(x) + \int_0^t\nabla^2 (f_{\mu, \mu_\star}-f_{\mu})(x_s)(y-x)\dd s
\end{equation*}
With the definition of \(\mathcal R\) we can apply \cite[Theorem 8]{chewi2023entropic}, yielding that
\begin{equation}\label{eq:hessianbound}
\nabla^2(f_{\mu,\mu_\star}-f_{\mu}) \succeq \frac\varepsilon2\left[\alpha_{\mu}\left(\sqrt{1+\frac{16}{\varepsilon^2\beta_\mu\alpha_\mu}}-1\right) + \beta_{\mu}\left(1-\sqrt{1+\frac{16}{\varepsilon^2\beta_\mu\alpha_{\mu_\star}}}\right)\right] \id,
\end{equation}
where \(\alpha_\mu, \beta_\mu\) are as in \cref{def:R}, and \(\alpha_{\mu_\star} = \lambda_{\min}(\Sigma_\star^{-1}) = \frac{1}{\lambda_{\max}(\Sigma_\star)}\). Note that the first term of the sum between square brackets is non-negative, and the second term can be bounded below by \(-\frac{8}{\varepsilon^2\alpha_{\mu_\star}}\) (the function \(x\mapsto x(1-\sqrt{1+\frac ax})\) is decreasing and converges to \(-\frac a2\) as \(x\to+\infty\)). We thus get \(\nabla^2(f_{\mu,\mu_\star}-f_{\mu}) \succeq-\frac{\lambda_{\mathrm{max}}(\Sigma_\star)}{\tilde\varepsilon}\id\), so that when taking the scalar product of \eqref{eq:hessianbound} with \(y-x\) we obtain
\begin{equation*}
\langle\nabla (f_{\mu, \mu_\star}-f_{\mu})(x_t), y - x\rangle \geq \langle\nabla (f_{\mu, \mu_\star} - f_{\mu})(x), y - x\rangle - \frac{\lambda_{\mathrm{max}}(\Sigma_\star)}{\tilde\varepsilon} t\| y-x\|^2.
\end{equation*}
Injecting in the first inequality of this proof, we get
\begin{equation*}
S_\varepsilon(\nu, \mu_\star) - S_\varepsilon(\mu, \mu_\star) \geq \int\langle \nabla (f_{\mu, \mu_\star}-f_{\mu})(x), y - x\rangle \dd\pi(x, y) - \frac{\lambda_{\mathrm{max}}(\Sigma_\star)}{2\tilde\varepsilon}W_2^2(\mu,\nu),
\end{equation*}
proving the claim.
\end{proof}
We are now ready to state the EVI and uniqueness.
\begin{theorem}[EVI on a regular class of measures]\label{thm:EVI}
Let \((\mu_t)_t\) be an absolutely continuous curve in \((\mathcal R, W_2)\) solution of \eqref{eq:SWGF}. Then for all \(\nu \in \mP_2(\bR^d)\), it holds
\begin{equation*}
\frac{\dd}{\dd t}\frac12 W_2^2(\mu_t, \nu) \leq S_\varepsilon(\nu, \mu_\star) - S_\varepsilon(\mu_t, \mu_\star) + \frac{\lambda_{\max}(\Sigma^\star)}{2\tilde\varepsilon}W_2^2(\mu_t, \nu).
\end{equation*}
In particular, fixing the initial datum to be \(\mu_0\in\mathcal G\), such a curve \((\mu_t)_t\) is unique, stays in \(\mathcal G\) and is a Wasserstein gradient flow of \(S_\varepsilon(\cdot, \mu_\star)\).
\end{theorem}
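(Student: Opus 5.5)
The EVI will follow from combining the derivative formula for \(W_2^2\) along absolutely continuous curves with the subgradient-type inequality of \cref{prop:subdiff-R}. Let \(v_t \coloneqq -\nabla(f_{\mu_t,\mu_\star} - f_{\mu_t})\) be the velocity field, so that \(\dot\mu_t + \div(\mu_t v_t) = 0\).

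\textbf{Derivative of the distance.} By \cite[Theorem 8.4.7]{ambrosio2008gradient}, for a.e.~\(t\) and any \(\nu\in\mP_2(\bR^d)\),
\begin{equation*}
\frac{\dd}{\dd t}\frac12W_2^2(\mu_t,\nu) = \int\langle v_t(x), x-y\rangle\dd\pi_t(x,y),
\end{equation*}
where \(\pi_t\in\Pi_{\mathrm o}(\mu_t,\nu)\). Since \(\mu_t\in\mathcal R\) is absolutely continuous, \(\pi_t\) is unique and induced by a map. This requires \(v_t\in L^2_{\mu_t}\) with \(\int_0^T\|v_t\|_{L^2_{\mu_t}}\dd t<\infty\), which I take to be part of the meaning of ``absolutely continuous solution''. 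The only additional fact needed is that \(\nabla f\) has linear growth, which holds because \(f_{\mu,\mu_\star}\) and \(f_\mu\) are semiconcave and semiconvex with bounded Hessians.

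\textbf{Plugging in the subgradient inequality.} The expression above equals \(\int\langle\nabla(f_{\mu_t,\mu_\star}-f_{\mu_t})(x), y-x\rangle\dd\pi_t\). \cref{prop:subdiff-R}, applied at \(\mu=\mu_t\in\mathcal R\) with the plan \(\pi_t\) (its proof works for any optimal plan), bounds this by
\begin{equation*}
S_\varepsilon(\nu,\mu_\star)-S_\varepsilon(\mu_t,\mu_\star)+\frac{\lambda_{\max}(\Sigma_\star)}{2\tilde\varepsilon}W_2^2(\mu_t,\nu),
\end{equation*}
which is exactly the EVI. \cref{prop:subdiff-R} assumes \(\Sigma_\star\) non-singular, so I would state that assumption, or deduce it implicitly from the use of \(\alpha_{\mu_\star}\).

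\textbf{Uniqueness.} Let \((\mu^1_t)_t,(\mu^2_t)_t\) be two such curves in \(\mathcal R\) with the same initial datum. Writing the EVI for each curve with the other as \(\nu\) and summing, as in \cref{sec:background:WGFs} with \(\lambda=-\lambda_{\max}(\Sigma_\star)/\tilde\varepsilon\), the \(S_\varepsilon\) terms cancel. There is a technicality: \(\nu\) now depends on \(t\). I would handle it by the standard doubling argument (\cite[Lemma 4.3.4]{ambrosio2008gradient}), which gives \(\frac{\dd}{\dd t}W_2^2(\mu^1_t,\mu^2_t)\) as the sum of the two partial derivatives for a.e.~\(t\). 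Gronwall then yields \(W_2^2(\mu^1_t,\mu^2_t)\le e^{2\lambda_{\max}(\Sigma_\star)t/\tilde\varepsilon}W_2^2(\mu^1_0,\mu^2_0)=0\).

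\textbf{Identification with the Gaussian flow.} For \(\mu_0\in\mathcal G\cap\mathcal R\), i.e.~a non-singular Gaussian, \cref{thm:µflow} provides a Gaussian solution of \eqref{eq:SWGF}. To show it lies in \(\mathcal R\), I need its covariance to stay non-singular on bounded time intervals. This follows from the ODE \(\dot\Sigma_t=-(G_{\Sigma_t}\Sigma_t+\Sigma_tG_{\Sigma_t})\) together with the bound \(\|G_{\Sigma_t}\|_{\mathrm{op}}\) from \cref{lem:Gbounded}, which gives \(\lambda_{\min}(\Sigma_t)\ge e^{-Ct}\lambda_{\min}(\Sigma_0)\). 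A non-singular Gaussian has a quadratic potential, so it belongs to \(\mathcal R\).

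By uniqueness, any curve in \(\mathcal R\) solving \eqref{eq:SWGF} from \(\mu_0\) coincides with this one, so it stays in \(\mathcal G\). By \cref{prop:subdiff-Seps-G} its velocity is a Wasserstein subdifferential of \(S_\varepsilon(\cdot,\mu_\star)\), so it is a gradient flow in the sense of \cite{ambrosio2008gradient}.

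\textbf{Main obstacle.} I expect the hardest step to be rigorously justifying the differentiation formula and the doubling argument for curves in \(\mathcal R\) with possibly unbounded velocities. The key input there is the integrability of \(\|v_t\|_{L^2_{\mu_t}}\), which I would obtain from the Hessian bounds of \cite{chewi2023entropic}; these give linear growth of \(v_t\).
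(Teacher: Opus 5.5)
Your proposal is correct and follows the paper's proof: the derivative formula of \cite[Theorem 8.4.7]{ambrosio2008gradient} combined with \cref{prop:subdiff-R} gives the EVI, and uniqueness follows from the summing-and-Gronwall argument of \cref{sec:background:WGFs}. The extra points you address are left implicit in the paper, and you handle them correctly: the hidden assumption \(\Sigma_\star\in\mathbb S^d_{++}\), the doubling-of-variables step for the \(t\)-dependent \(\nu\), and the check that the Gaussian solution has non-degenerate covariance (hence lies in \(\mathcal R\)), which is what lets uniqueness identify the two curves.
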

\begin{proof}
We have by \cite[Theorem 8.4.7]{ambrosio2008gradient} that
\begin{equation*}
\frac{\dd}{\dd t}\frac12W_2^2(\mu_t, \nu) = \int \langle\nabla(f_{\mu_t,\mu_\star}-f_{\mu_t})(x), y-x\rangle \dd \pi_t^\nu(x,y)
\end{equation*}
for some \(\pi_t^\nu \in \Pi_{\mathrm o}(\mu_t,\nu)\). By injecting the inequality given by \cref{prop:subdiff-R}, we get the EVI, and the uniqueness follows (as explained in \cref{sec:background:WGFs}).
\end{proof}

\begin{remark}[Difficulties with a general uniqueness result]
In our proof, as in the proof of \(\lambda\)-convexity of \(S_\varepsilon\) in the case of compactly supported measures \cite[Theorem 4.1]{carlier2024displacement}, a key ingredient is the regularity of the Schrödinger potentials. To prove uniqueness of the flow on the entirety of \(\mP_2(\bR^d)\), one would most likely need to obtain similar smoothness results in this more general case, and it is not yet known whether this is possible. Similarly, deriving a stability result for the Schrödinger map \( (\mu,\nu) \mapsto (f_{\mu,\nu}, g_{\mu,\nu}) \), as done in \cite{carlier2024displacement} in the compact setting, would be a powerful tool to control \( \nabla(f_{\mu_t,\mu_\star}-f_{\mu_t}) \) and hence prove that  \( (\mu_t)_t \)  stays in $\mathcal{R}$ (and thus in $\mathcal{G}$) whenever $\mu_0 \in \mathcal{G}$.
\end{remark}

\section{Flow in the Bures--Wasserstein space and convergence} \label{sec:equation-cv}
From the results of \cref{sec:wellposed}, we can now focus our analysis on the solution of \eqref{eq:SWGF} that stays in the space of Gaussian distributions. 
Using \cref{sec:background:preliminary-gaussian}, we write the evolution equation of the corresponding means and covariance matrices, and use this expression to obtain convergence results.
\subsection{General results} \label{sec:equation-cv:general}
We start this section with our main result, writing the mean and covariance evolutions and giving a sufficient condition for convergence (the non-singularity of the initial measure) and characterizing precisely the limit points in general. 
\begin{theorem}\label{thm:covflow}
The unique Gaussian solution of \eqref{eq:SWGF} starting at \(\mu_0 = \mathcal N(m_0, \Sigma_0)\) is given by \(\mu_t = \mathcal N(m_t, \Sigma_t)\) with \(m_t = m_\star + e^{-2t}(m_0-m_\star)\) and \(\Sigma_t\) solution of
\begin{equation}\label{eq:Sigmaflow}
\dot\Sigma_t = -(G_{\Sigma_t}\Sigma_t + \Sigma_tG_{\Sigma_t})
\end{equation}
initialized at \(\Sigma_0\). Additionally, we have:
\begin{thmenum}
\item If \(\Sigma_0\) is invertible, so is \(\Sigma_t\) for a.e.~\(t\), and conversely if \(\Sigma_0\) is singular then so is \(\Sigma_t\) for a.e.~\(t\). \label{thm:covflow:inv}
\item The curve \((\Sigma_t)_t\) is bounded, and as \(t\to+\infty\), \(\Sigma_t\to P\diag((\lambda_i)_i)P^T\) where \(\Sigma_\star = P\diag((\lambda_i^\star)_i)P^T\) and \(\lambda_i \in \{0, \lambda_i^\star\}\) for all \(i\). If \(\Sigma_0\) is invertible, then \(\Sigma_t \to \Sigma_\star\). \label{thm:covflow:cv}
\item The energy dissipation equality \(\frac{\dd}{\dd t} \mathcal B_\varepsilon(\Sigma_t, \Sigma_\star) = - \Tr(G_{\Sigma_t}\Sigma_tG_{\Sigma_t})\) holds for a.e.~\(t\). \label{thm:covflow:ede}
\end{thmenum} 
\end{theorem}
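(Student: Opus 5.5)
The approach is to exploit the affine structure of the velocity field. By \cref{thm:µflow} and \cref{prop:d^0E}, the flow stays in \(\mathcal G\) and is driven by \(v_t(x) = -\nabla(f_{\mu_t,\mu_\star}-f_{\mu_t})(x) = -G_{\Sigma_t}(x-m_t) - 2(m_t-m_\star)\) (\cref{thm:Seps-gauss}).

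\textbf{Mean and covariance equations.} Testing the continuity equation against \(x\) gives \(\dot m_t = \int v_t\,\dd\mu_t = -2(m_t-m_\star)\), which integrates to the stated formula for \(m_t\). Testing against \((x-m_t)(x-m_t)^T\) gives \(\dot\Sigma_t = \int (v_t(x)(x-m_t)^T + (x-m_t)v_t(x)^T)\,\dd\mu_t\). The constant part of \(v_t\) integrates to zero against \(x-m_t\), so this reduces to \eqref{eq:Sigmaflow}. Conversely, the curve \(\mathcal N(m_t,\Sigma_t)\) built from these ODEs solves \eqref{eq:SWGF}. Well-posedness of \eqref{eq:SWGF} in \(\mathcal G\) then follows from \cref{thm:µflow}, or directly from local Lipschitzness of \(\Sigma\mapsto G_\Sigma\) away from trouble together with boundedness of \(G\) (\cref{lem:Gbounded}).

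\textbf{Item (a).} I will write \(\Sigma_t = R_t\Sigma_0R_t^T\), where \(R_t\) solves \(\dot R_t = -G_{\Sigma_t}R_t\), \(R_0=\id\). One checks directly that this \(\Sigma_t\) satisfies \eqref{eq:Sigmaflow}, so by uniqueness it is the solution. Since \(\|G_{\Sigma_t}\|_{\mathrm{op}}\) is bounded by \cref{lem:Gbounded}, Liouville's formula gives \(\det R_t = \exp(-\int_0^t \Tr G_{\Sigma_s}\,\dd s) > 0\). Hence \(R_t\) is invertible, and the rank of \(\Sigma_t\) is constant in time, which gives (a) for every \(t\).

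\textbf{Item (c).} This is the chain rule with \(\nabla_1\mathcal B_\varepsilon = \tfrac12 G\):
\[
\tfrac{\dd}{\dd t}\mathcal B_\varepsilon(\Sigma_t,\Sigma_\star) = \tfrac12\Tr(G_{\Sigma_t}\dot\Sigma_t) = -\Tr(G_{\Sigma_t}\Sigma_tG_{\Sigma_t}).
\]

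\textbf{Item (b).} The energy \(\mathcal B_\varepsilon(\Sigma_t,\Sigma_\star)\) is non-increasing and nonnegative. For boundedness of \(\Sigma_t\), I will show that \(\mathcal B_\varepsilon(\cdot,\Sigma_\star)\) is coercive, i.e. grows like \(\Tr\Sigma\) minus lower-order terms, using the explicit formula. Integrating (c) then yields \(\int_0^\infty \Tr(G\Sigma G)\,\dd t<\infty\). Since \(t\mapsto \Tr(G_{\Sigma_t}\Sigma_tG_{\Sigma_t})\) is uniformly Lipschitz along the bounded trajectory, it tends to \(0\). By a LaSalle-type argument, every limit point \(\bar\Sigma\) then satisfies \(G_{\bar\Sigma}\bar\Sigma = 0\).

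Next I characterize the stationary points. On \(\mathrm{range}(\bar\Sigma)\), the condition \(G_{\bar\Sigma}=0\) says
\[
\bar\Sigma\bigl((\bar\Sigma^2+\tilde\varepsilon^2)^{1/2}+\tilde\varepsilon\bigr)^{-1} = \Sigma_\star^{1/2}\bigl((\Sigma_\star^{1/2}\bar\Sigma\Sigma_\star^{1/2}+\tilde\varepsilon^2)^{1/2}+\tilde\varepsilon\bigr)^{-1}\Sigma_\star^{1/2}
\]
in the appropriate restricted sense. From this I expect to deduce that \(\bar\Sigma\) commutes with \(\Sigma_\star\) and that each of its eigenvalues is \(0\) or the matching \(\lambda_i^\star\). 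The idea is that on \(\mathrm{range}(\bar\Sigma)\) the equation reduces to the full-rank fixed-point problem, whose only solution is \(\Sigma_\star\) restricted to an invariant subspace, by the positive definiteness of \(S_\varepsilon\).

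To go from limit points to convergence of the whole trajectory, note that the stationary set is finite when \(\Sigma_\star\) has simple spectrum. In general it is a union of compact components (Grassmannians of eigenspaces). Since \(\mathcal B_\varepsilon\) takes finitely many values on these components and the \(\omega\)-limit set is connected, it lies in a single component. If that component is a continuum, I will argue convergence via a Łojasiewicz inequality, since \(\mathcal B_\varepsilon\) is real-analytic in \(\Sigma\) on the relevant set.

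\textbf{Main obstacle: the invertible case.} When \(\Sigma_0\) is invertible, I must show \(\Sigma_t\to\Sigma_\star\), i.e. exclude singular limits, and in particular limits that drop a direction where \(\lambda_i^\star>0\). My first attempt is an instability argument near such a \(\bar\Sigma\). In a direction \(u\) with \(\bar\Sigma u=0\) but \(\langle u,\Sigma_\star u\rangle>0\), we have
\[
G_{\bar\Sigma}u = -2\Sigma_\star^{1/2}\bigl((\Sigma_\star^{1/2}\bar\Sigma\Sigma_\star^{1/2}+\tilde\varepsilon^2)^{1/2}+\tilde\varepsilon\bigr)^{-1}\Sigma_\star^{1/2}u,
\]
so \(\langle u, G_{\bar\Sigma}u\rangle<0\). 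Then \(\tfrac{\dd}{\dd t}\langle u,\Sigma_t u\rangle \approx -2\langle u, G\Sigma_t u\rangle\) is positive near \(\bar\Sigma\). Equivalently, \(\tfrac{\dd}{\dd t}\log\det\) of the compression of \(\Sigma_t\) to that direction is bounded below by a positive constant once \(\Sigma_t\) is close to \(\bar\Sigma\). This contradicts \(\Sigma_t\) approaching \(\bar\Sigma\) while \(\Sigma_t\) remains invertible by (a). Making this rigorous is the step I expect to be hardest: the directions must be tracked uniformly near a possibly non-isolated stationary component, and the positive growth must dominate the coupling with the other eigendirections.
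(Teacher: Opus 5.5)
Your derivation of the mean and covariance ODEs, item (c), the coercivity bound and $\Tr(G_{\Sigma_t}\Sigma_tG_{\Sigma_t})\to0$ agree with the paper. The congruence $\Sigma_t=R_t\Sigma_0R_t^T$ is a valid alternative to the paper's determinant identity for (a), and it even shows the rank is constant for every $t$. Item (b), however, has two genuine gaps.

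\textbf{The characterization of stationary points is only conjectured.} Positive definiteness of $S_\varepsilon$ identifies the global minimizer. It says nothing about a stationary point supported on $V=\mathrm{range}(\bar\Sigma)$ unless you already know $V$ is $\Sigma_\star$-invariant, which is exactly the commutation you need to prove; your phrase ``$\Sigma_\star$ restricted to an invariant subspace'' presupposes it. The restricted equation $P_VG_{\bar\Sigma}P_V=0$ cannot give invariance by itself. The information lies in the off-diagonal block, i.e.\ in $G_{\bar\Sigma}\bar\Sigma=\bar\Sigma G_{\bar\Sigma}=0$ read as identities on the whole space.

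The paper's missing step is algebraic. Write $X=\Sigma_\star^{1/2}\Sigma\Sigma_\star^{1/2}$, $J(Y)=((Y+\tilde\varepsilon^2\id)^{1/2}+\tilde\varepsilon\id)^{-1}$ and $H=\Sigma_\star^{1/2}J(X)\Sigma_\star^{1/2}$. Stationarity gives $H\Sigma=\Sigma H=\Sigma^2J(\Sigma^2)$. Multiplying $J(X)X=XJ(X)$ by $\Sigma_\star^{1/2}$ on both sides gives $H\Sigma\Sigma_\star=\Sigma_\star\Sigma H$, hence $\Sigma^2J(\Sigma^2)$ commutes with $\Sigma_\star$. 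Since $\lambda\mapsto\lambda^2/(\sqrt{\lambda^2+\tilde\varepsilon^2}+\tilde\varepsilon)$ is injective on $\bR_+$, $\Sigma$ commutes with $\Sigma_\star$ as well. The resulting scalar equations then force $\lambda_i\in\{0,\lambda_i^\star\}$.

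\textbf{Your instability argument for invertible $\Sigma_0$ does not close.} Take $q_t=\langle u,\Sigma_tu\rangle$ with $u$ an eigenvector of $G_{\bar\Sigma}$ for an eigenvalue $\ell<0$. Then $\dot q_t=-2\ell q_t-2\langle(G_{\Sigma_t}-G_{\bar\Sigma})u,\Sigma_tu\rangle$, and the last term is only bounded by $o(1)\cdot\sqrt{q_t}$. That bound can dominate $q_t$ as both tend to zero, so closeness of $\Sigma_t$ to $\bar\Sigma$ gives no positive lower bound on $\frac{\dd}{\dd t}\log q_t$.

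The fix is global, and you already have the tools. Liouville gives $\frac{\dd}{\dd t}\log\det\Sigma_t=-2\Tr(G_{\Sigma_t})$ exactly, with no coupling terms. Since $\Sigma_t\to\Sigma_\infty$, $\Tr(G_{\Sigma_t})\to\Tr(G_{\Sigma_\infty})$. Moreover $G_{\Sigma_\infty}$ vanishes on $\mathrm{range}(\Sigma_\infty)$ and equals $-2H$ compressed to $\ker\Sigma_\infty$. By your own computation, this is strictly negative on any $u\in\ker\Sigma_\infty$ with $\Sigma_\star^{1/2}u\neq0$. So if $\Sigma_\infty$ drops a direction with $\lambda_i^\star>0$, then $\Tr(G_{\Sigma_\infty})<0$. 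Hence $\log\det\Sigma_t$ is eventually increasing and bounded below, contradicting $\det\Sigma_\infty=0$.

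Finally, Łojasiewicz does not apply off the shelf to the Bures--Wasserstein flow, whose metric degenerates at singular $\Sigma$, exactly where the limit may lie. The paper lifts the flow to the Euclidean gradient flow $\dot X_t=-\nabla\tilde F(X_t)$ of the analytic function $\tilde F(X)=\mathcal B_\varepsilon(X^TX,\Sigma_\star)$, with $\Sigma_t=X_t^TX_t$. This also makes your analysis of the connected components of the stationary set unnecessary.
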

Note that this theorem gives \refthmitem{thm:cv-criterion}{ns} and \ref{thm:cv-criterion:s2ns}, since for Gaussian distributions, convergence of the means and of the covariances is enough to get convergence of the measures for \(W_2\), and if \(\Sigma_0\) is singular, then \(\Sigma_t\) cannot converge to a non-singular matrix.
\begin{remark}[Link with the Bures--Wasserstein geometry]
The space of covariance matrices endowed with the Bures--Wasserstein distance is a Riemannian manifold \cite{takatsu2011wasserstein, bhatia2019bures}. The Bures--Wasserstein gradient of a functional \(E\) can be computed as \(\nabla_{BW}E(\Sigma) = 2(\nabla E(\Sigma)\Sigma + \Sigma \nabla E(\Sigma))\), where we distinguish the Bures--Wasserstein gradient \(\nabla_{BW}\) from the Euclidean gradient \(\nabla\) (see \cref{sec:BWGFs}). Thus, since \(\nabla_1\mathcal B(\Sigma, \Sigma_\star) = \frac12G_\Sigma\) as given by \cref{thm:Seps-gauss}, the evolution \eqref{eq:Sigmaflow} indeed corresponds the Bures--Wasserstein gradient flow of \(E = \mathcal B_\varepsilon(\cdot, \Sigma_\star)\), i.e.~\(\dot\Sigma_t = -\nabla_{BW}(\Sigma_t)\). We will use this fact to prove the convergence of the flow thanks to \cref{lem:BW-flow-cv}. Note also that the energy dissipation equality \ref{thm:covflow:ede} reads \(\frac{\dd}{\dd t}E(\Sigma_t) = -\mathbf g_{\Sigma_t}(\dot\Sigma_t, \dot\Sigma_t)\), where \(\mathbf g\) is the Bures--Wasserstein metric tensor as defined in \cref{sec:BWGFs}.
\end{remark}

First, we prove the expression of the flow and its properties at finite \(t\), and we postpone the proof regarding the convergence as we will need extra lemmas.
\begin{proof}[Proof of \eqref{eq:Sigmaflow}, \ref{thm:covflow:inv} and \ref{thm:covflow:ede}.]
We write for conciseness \(G_t = G_{\Sigma_t}\) and \(v_t = \nabla (f_{\mu_t,\mu\star}-f_{\mu_t})\). Then we have \(\dot m_t = \frac{\dd}{\dd t}\int x \dd \mu_t(x) = -\int v_t(x)\dd\mu_t(x)\), and thus \(\dot m_t = -G_t(m_t-m_t) - 2 (m_t - m_\star) =2 (m_\star - m_t)\). The expression on \(m_t\) follows. Note that we can now write \(v_t = G_t(x-m_t) - \dot m_t\). For \(\Sigma_t\), we write \(M_t = \int xx^Td\mu_t(x) = \Sigma_t + m_tm_t^T\) and we have \(\dot M_t = -\int (xv_t(x)^T + v_t(x)x^T)\dd\mu_t(x)\). Then compute \(\int v_t(x)x^T\dd\mu_t(x) = \int (G_t(x-m_t)-\dot m_t)x^Td\mu_t(x) = G_t (M_t -m_tm_t^T) - \dot m_t m_t^T\). Hence,
\begin{equation*}
\dot M_t = -(G_t(M_t -m_tm_t^T)  + (M_t -m_tm_t^T)G_t) + \dot m_t m_t^T + m_t\dot m_t^T.
\end{equation*}
Using \(\Sigma_t = M_t - m_tm_t^T\) and \(\dot \Sigma_t = \dot M_t - \dot m_tm_t^T - m_t\dot m_t^T\), we deduce \eqref{eq:Sigmaflow}.

It follows that \(\frac{\dd}{\dd t}\det(\Sigma_t) = \Tr(\dot\Sigma_t\mathrm{adj}(\Sigma_t)) = - 2\Tr(G_{\Sigma_t})\det(\Sigma_t)\) (where adj denotes the adjugate matrix), giving \(\det(\Sigma_t) = \exp\left(-\int_0^t\Tr(G_{\Sigma_s})\dd s\right)\det(\Sigma_0)\). Since \(s\mapsto\Tr(G_{\Sigma_s})\) is bounded (\cref{lem:Gbounded}), we get \ref{thm:covflow:inv}. Finally, applying the chain rule, \(\frac{\dd}{\dd t}\mathcal B_\varepsilon(\Sigma_t, \Sigma_\star) = \Tr(\dot\Sigma_t\tfrac12G_{\Sigma_t}) =- \Tr(G_{\Sigma_t}\Sigma_tG_{\Sigma_t})\) i.e.~\ref{thm:covflow:ede}. Note that this could also be obtained from \cite[Theorem~11.2.1, Eq.~(11.2.4)]{ambrosio2008gradient}, using the convexity of \( S_\varepsilon(\cdot,\mu_\star) \) on the space of Gaussian measures given by \cref{thm:ggcvx}.
\end{proof}
As the flow on the means has an explicit expression that converges exponentially to the target mean, and is independent from the flow on the covariances, we now assume all measures to be centered and focus on the convergence of the covariance. To obtain the result, we use Lemmas \ref{lem:cvtocrit} and \ref{lem:critpoints} below, respectively giving convergence to a critical point and characterizing such points.
\begin{lemma}\label{lem:cvtocrit}
Let \((\Sigma_t)_t\) follow the flow \eqref{eq:Sigmaflow}. Then it is bounded and converges to some \(\Sigma_\infty\), which must be a critical point in the sense that \(G_{\Sigma_\infty}\Sigma_\infty G_{\Sigma_\infty} = 0\).
\end{lemma}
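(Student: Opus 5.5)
We prove \cref{lem:cvtocrit} in three steps: show the curve is bounded, extract limit points and show they are critical, and finally upgrade subsequential convergence to convergence of the whole curve.

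\emph{Boundedness.} Since $\mathcal B_\varepsilon(\cdot,\Sigma_\star)=S_\varepsilon(\mathcal N(0,\cdot),\mu_\star)\ge 0$, the energy dissipation equality \refthmitem{thm:covflow}{ede} shows that $t\mapsto\mathcal B_\varepsilon(\Sigma_t,\Sigma_\star)$ is nonincreasing. It therefore suffices to show that $\mathcal B_\varepsilon(\cdot,\Sigma_\star)$ has bounded sublevel sets. A direct estimate on the trace gives this as well: by \cref{lem:Gbounded},
\begin{equation*}
\tfrac{\dd}{\dd t}\Tr\Sigma_t=-2\Tr(G_{\Sigma_t}\Sigma_t).
\end{equation*}
The first part of $G_\Sigma$ is $2\Sigma((\Sigma^2+\tilde\varepsilon^2\id)^{\frac12}+\tilde\varepsilon\id)^{-1}$, whose eigenvalues tend to $2$ for large eigenvalues of $\Sigma$. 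The second part is bounded in operator norm by $\lambda_{\max}(\Sigma_\star)/(2\tilde\varepsilon)$, and in fact its size decays like $\lambda^{-1/2}$ along large directions of $\Sigma$. Together these give $\Tr(G_\Sigma\Sigma)\ge c\Tr\Sigma - C$ for some constants $c>0$ and $C$. Gronwall's lemma then yields $\sup_t\Tr\Sigma_t<\infty$. Alternatively, one can expand $B_\varepsilon$ and check that $\mathcal B_\varepsilon(\Sigma,\Sigma_\star)\to\infty$ as $\Tr\Sigma\to\infty$. I would try the sublevel-set argument first, since it avoids messy eigen-estimates.

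\emph{Limit points are critical.} Integrating the dissipation equality gives
\begin{equation*}
\int_0^\infty \Tr(G_{\Sigma_t}\Sigma_tG_{\Sigma_t})\,\dd t\le\mathcal B_\varepsilon(\Sigma_0,\Sigma_\star)<\infty.
\end{equation*}
On the bounded set where the curve lives, the map $\Sigma\mapsto G_\Sigma$ is continuous, so the integrand is Lipschitz-bounded in $\Sigma$. Moreover $\dot\Sigma_t$ is bounded by \eqref{eq:Sigmaflow} and \cref{lem:Gbounded}, so $t\mapsto\Tr(G_{\Sigma_t}\Sigma_tG_{\Sigma_t})$ is uniformly continuous. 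By Barbalat's lemma it therefore tends to $0$. Every limit point $\Sigma_\infty$, which exists by compactness, then satisfies $\Tr(G\Sigma_\infty G)=0$, i.e.\ $\Sigma_\infty^{1/2}G_{\Sigma_\infty}=0$, hence $G_{\Sigma_\infty}\Sigma_\infty G_{\Sigma_\infty}=0$.

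\emph{Convergence of the whole curve (main obstacle).} The hard part is showing that the curve converges rather than merely accumulating on a continuum of critical points. My plan is to use the structure of critical points. The condition $G_\Sigma\Sigma=0$ together with symmetry should force $\Sigma$ to commute with $\Sigma_\star$ and to have eigenvalues in $\{0,\lambda_i^\star\}$. This is what \cref{lem:critpoints} is expected to provide, and I would establish that characterization first. When $\Sigma_\star$ has distinct eigenvalues, the critical set is then finite. Since the $\omega$-limit set of a bounded trajectory with $\dot\Sigma_t\to0$ is connected, it must reduce to a single point, which gives convergence.

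With repeated eigenvalues, the critical set may contain continua of projections, for example subspaces of an eigenspace. In that case I would use a different argument. The values $\mathcal B_\varepsilon$ are constant on each connected component of the critical set. I would then argue either via a Łojasiewicz inequality, since $\mathcal B_\varepsilon$ is real-analytic on $\mathbb S^d_{+}$ away from degeneracies, or via the EVI / $\lambda$-convexity on $\mathcal G$ from \cref{thm:ggcvx} combined with \cref{lem:BW-flow-cv}, which I expect gives convergence of bounded Bures--Wasserstein gradient flows. The Łojasiewicz route, applied near each limit point, is the one I would attempt first.
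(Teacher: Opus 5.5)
Your boundedness and criticality steps are sound and match the paper. The paper uses the sublevel-set argument you list. Your trace--Gronwall alternative also works, since \(\Tr(G_\Sigma\Sigma)\ge 2\Tr\Sigma-2d\tilde\varepsilon-2d\sqrt{\lambda_{\max}(\Sigma_\star)\Tr\Sigma}\). For the Barbalat step you only need uniform continuity of \(\Sigma\mapsto G_\Sigma\) on a compact set, not Lipschitz continuity. Your finiteness-plus-connectedness argument is a correct and more elementary proof of convergence when the nonzero eigenvalues of \(\Sigma_\star\) are simple. It is not circular, since \cref{lem:critpoints} is proved independently of this lemma.

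The gap is the general convergence step, which you leave as a menu of options.

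\emph{Why the \(\Sigma\)-coordinate route fails.} The route you would try first is a Lojasiewicz inequality for \(\mathcal B_\varepsilon(\cdot,\Sigma_\star)\) in the \(\Sigma\) variable near each limit point. This breaks down at singular limit points, which cannot be avoided: they occur whenever \(\Sigma_0\) is singular, and \(\Sigma_\infty=0\) in the orthogonal example of \cref{fig:ellipses-singular}. The flow \eqref{eq:Sigmaflow} is not a Euclidean gradient flow in \(\Sigma\). Its dissipation \(\Tr(G_\Sigma\Sigma G_\Sigma)\) is not comparable to \(\|\nabla_1\mathcal B_\varepsilon\|^2=\frac14\|G_\Sigma\|^2\) near singular \(\Sigma\). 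Moreover, BW-critical points need not be Euclidean-critical: at \(\Sigma=0\) one has \(G_0=-\Sigma_\star/\tilde\varepsilon\neq 0\). So a Lojasiewicz inequality in \(\Sigma\) gives no control of the length \(\int\|\dot\Sigma_t\|\,\dd t\).

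\emph{Why the EVI route fails.} \(\lambda\)-convexity with \(\lambda=-\lambda_{\max}(\Sigma_\star)/\tilde\varepsilon<0\) carries no long-time information.

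\emph{The missing idea.} What you need is the lift \(\Sigma=X^TX\) of \cref{lem:BW-flow-cv}. Under this lift:
\begin{itemize}
\item \(X_t\) solves the Euclidean gradient flow of \(\tilde F(X)=\mathcal B_\varepsilon(X^TX,\Sigma_\star)\);
\item \(X_t\) is bounded, because \(\|X_t\|_F^2=\Tr\Sigma_t\) and you have already bounded \(\Tr\Sigma_t\);
\item \(\|\nabla\tilde F(X_t)\|_F^2=\Tr(G_{\Sigma_t}\Sigma_tG_{\Sigma_t})\), so the degenerate dissipation becomes an honest Euclidean gradient norm.
\end{itemize}

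\emph{Analyticity.} Your caveat ``away from degeneracies'' is unnecessary. \(\mathcal B_\varepsilon(\cdot,\Sigma_\star)\) is real-analytic on a neighborhood of all of \(\mathbb S^d_+\), singular matrices included. It depends on \(\Sigma\) only through the spectra of \(\Sigma_\star^{\frac12}\Sigma\Sigma_\star^{\frac12}\) and \(\Sigma^2\), via functions analytic near \([0,+\infty)\). Hence \(\tilde F\) is analytic on \(\bR^{d\times d}\). The Lojasiewicz convergence theorem for bounded gradient flows of analytic functions (Absil et al.) then gives convergence of \(X_t\), and hence of \(\Sigma_t\).

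This is exactly the paper's argument: boundedness, analyticity, and \refthmitem{lem:BW-flow-cv}{limit}. Your proposal names that lemma but pairs it with the wrong ingredient (EVI) instead of analyticity.
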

\begin{proof}
That \((\Sigma_t)_t\) is bounded comes from the fact that the sublevel sets of \(\mathcal B_\varepsilon(\cdot, \Sigma_\star)\) are bounded and that this functional decreases over the flow by \refthmitem{thm:covflow}{ede}. To see that the sublevel sets are bounded, writing the expression of \(\mathcal B_\varepsilon\) given in \cref{thm:Seps-gauss} in terms of eigenvalues, we have
\begin{equation}\label{eq:Beps-eig}
\begin{aligned}
\mathcal B_\varepsilon(\Sigma, \Sigma_\star) =& \tfrac\varepsilon2\sum_i \left[\log(\sqrt{4\alpha_i+\tfrac{\varepsilon^2}{4}}+\tfrac\varepsilon2) - \tfrac12\log(\sqrt{4\lambda_i^2+\tfrac{\varepsilon^2}{4}}+\tfrac\varepsilon2)\right] \\&+ \sum_i\left[ \tfrac12\sqrt{4\lambda_i^2+\tfrac{\varepsilon^2}{4}} - \sqrt{4\alpha_i+\tfrac{\varepsilon^2}{4}}\right] - \tfrac{1}{2}B_\varepsilon(\Sigma_\star, \Sigma_\star)
\end{aligned}
\end{equation}
where \((\alpha_i)_i\) are the eigenvalues of \(\Sigma^{\frac12}_\star\Sigma\Sigma_\star^{\frac12}\) and \((\lambda_i)_i\) that of \(\Sigma\). Considering that \(\alpha_i \leq \lambda_{\max}(\Sigma)\lambda_{\max}(\Sigma_\star)\) for all \(i\), taking \(\lambda_{\max}(\Sigma)\to+\infty\) we get that the leading term is equivalent to \(\lambda_{\max}(\Sigma)\), making \(\mathcal B_\varepsilon(\Sigma, \Sigma_\star)\) also diverge to \(+\infty\), yielding bounded sublevel sets.
Seeing that this function is analytic, one can use Lojasiewicz-type arguments to obtain convergence to a limit, see \cref{lem:BW-flow-cv}. Additionally, using \cref{lem:Gbounded}, \((\dot\Sigma_t)_t\) is bounded and thus \(t\mapsto\Sigma_t\) is uniformly continuous, therefore \(t\mapsto\Tr(G_{\Sigma_t}\Sigma_tG_{\Sigma_t})\) is uniformly continuous. In addition, for all \(t\) we have \(\int_0^{t} \Tr(G_{\Sigma_s}\Sigma_sG_{\Sigma_s})\dd s = \mathcal B_\varepsilon(\mu_0, \mu_\star) - \mathcal B_\varepsilon(\mu_t, \mu_\star) \leq \mathcal B_\varepsilon(\mu_0, \mu_\star)\) by \refthmitem{thm:covflow}{ede}, yielding \(\int_0^{+\infty} \Tr(G_{\Sigma_s}\Sigma_sG_{\Sigma_s})\dd s<+\infty\). It follows that \(\Tr(G_{\Sigma_t}\Sigma_tG_{\Sigma_t})\to 0\) as \(t\to+\infty\). Thus, accumulation points must satisfy \(\Tr(G_{\Sigma_\infty}\Sigma_\infty G_{\Sigma_\infty})=0\), which implies that the matrix in the trace is null since it is positive semidefinite.
\end{proof}
\begin{lemma}\label{lem:critpoints}
A covariance \(\Sigma\) is critical in the sense of \cref{lem:cvtocrit} if and only if it is of the form \(P\diag((\lambda_i)_i) P^T\), where \(P\) is such that \(\Sigma_\star = P\diag((\lambda_i^\star)_i) P^T\) and \(\lambda_i \in \{0, \lambda_i^\star\}\).
\end{lemma}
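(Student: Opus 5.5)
The proof works directly with the critical-point condition. Since $G_\Sigma\Sigma G_\Sigma = (\Sigma^{1/2}G_\Sigma)^T(\Sigma^{1/2}G_\Sigma)$, it vanishes if and only if $\Sigma^{1/2}G_\Sigma = 0$, which is equivalent to $\Sigma G_\Sigma = 0$. Transposing, this is also equivalent to $G_\Sigma\Sigma = 0$. I will therefore characterize the $\Sigma$ with $G_\Sigma\Sigma = 0$. Throughout, write $\Gamma = \Sigma_\star$, and split $\frac12 G_\Sigma = S - K$ with
\begin{equation*}
S \coloneqq \Sigma\big((\Sigma^2+\tilde\varepsilon^2\id)^{\frac12}+\tilde\varepsilon\id\big)^{-1},\qquad K \coloneqq \Gamma^{\frac12} M\Gamma^{\frac12},\quad M\coloneqq \big((\Gamma^{\frac12}\Sigma\Gamma^{\frac12}+\tilde\varepsilon^2\id)^{\frac12}+\tilde\varepsilon\id\big)^{-1}.
\end{equation*}

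\emph{Sufficiency.} Suppose $\Sigma = P\diag((\lambda_i)_i)P^T$ with $\lambda_i\in\{0,\lambda_i^\star\}$. Then $\Sigma$ and $\Gamma$ are simultaneously diagonal in $P$, and so is $G_\Sigma$. Its $i$-th eigenvalue is
\begin{equation*}
\frac{2\lambda_i}{\sqrt{\lambda_i^2+\tilde\varepsilon^2}+\tilde\varepsilon}-\frac{2\lambda_i^\star}{\sqrt{\lambda_i^\star\lambda_i+\tilde\varepsilon^2}+\tilde\varepsilon}.
\end{equation*}
This is $0$ when $\lambda_i=\lambda_i^\star$. When $\lambda_i=0$ the corresponding entry of $\Sigma$ vanishes. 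In both cases the $i$-th diagonal entry of $G_\Sigma\Sigma$ is zero, so $G_\Sigma\Sigma=0$.

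\emph{Necessity: a Riccati identity.} The key tool is a Riccati equation for $K$. Set $C = \Gamma^{1/2}\Sigma\Gamma^{1/2}$. From $(M^{-1}-\tilde\varepsilon\id)^2 = C+\tilde\varepsilon^2\id$ (all these matrices are functions of $C$, hence commute) we get $\id - 2\tilde\varepsilon M = MCM$. Conjugating by $\Gamma^{1/2}$ gives
\begin{equation*}
K\Sigma K + 2\tilde\varepsilon K = \Gamma.
\end{equation*}
Specializing this to the self-case $\Gamma=\Sigma$, where $K$ becomes $S$, gives $S\Sigma S + 2\tilde\varepsilon S = \Sigma$. I can also verify this last identity directly on eigenvalues.

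\emph{Necessity: the algebra.} Assume $G_\Sigma\Sigma = 0$, i.e. $K\Sigma = S\Sigma$. Since $S$ is a function of $\Sigma$, we have $S\Sigma = \Sigma S$. Transposing $K\Sigma = S\Sigma$ gives $\Sigma K = \Sigma S$. Hence
\begin{equation*}
K\Sigma K = S\Sigma K = S\,\Sigma S.
\end{equation*}
Plugging this into the two Riccati identities yields $\Gamma - 2\tilde\varepsilon K = \Sigma - 2\tilde\varepsilon S$, that is,
\begin{equation*}
K = S + \tfrac{1}{2\tilde\varepsilon}(\Gamma-\Sigma).
\end{equation*}
Substituting back into $K\Sigma = S\Sigma$ gives $(\Gamma-\Sigma)\Sigma = 0$, i.e. $\Gamma\Sigma = \Sigma^2$. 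Transposing, $\Sigma\Gamma = \Sigma^2$ as well, so $\Sigma$ and $\Gamma$ commute. They are therefore simultaneously diagonalizable in some orthonormal $P$, with $\Sigma_\star = P\diag((\lambda_i^\star)_i)P^T$. Reading $\lambda_i^\star\lambda_i = \lambda_i^2$ coordinatewise gives $\lambda_i\in\{0,\lambda_i^\star\}$, which is the claimed form.

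The main obstacle is the necessity direction. Without the Riccati identity, $K$ is an awkward non-commutative expression in $\Gamma$ and $\Sigma$. The step I expect to need the most care is deriving $K\Sigma K + 2\tilde\varepsilon K = \Gamma$, where one must check that all the functional calculus involved stays within commuting functions of $C$. After that, the rest is a short algebraic manipulation.
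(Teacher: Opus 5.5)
Your proof is correct. The sufficiency direction is the same eigenvalue computation as in the paper, but your necessity direction takes a genuinely different route.

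The paper uses only the trivial commutation $J(X)X=XJ(X)$, where $J(X)=((X+\tilde\varepsilon^2\id)^{\frac12}+\tilde\varepsilon\id)^{-1}$ and $X=\Sigma_\star^{\frac12}\Sigma\Sigma_\star^{\frac12}$. Sandwiching this by $\Sigma_\star^{\frac12}$ gives, in your notation, $K\Sigma\Sigma_\star=\Sigma_\star\Sigma K$. Substituting $K\Sigma=\Sigma K=\Sigma S$ then shows that $\Sigma_\star$ commutes with $\Sigma S=\Sigma^2J(\Sigma^2)$. It follows that $\Sigma_\star$ commutes with $\Sigma$, because $x\mapsto x^2/(\sqrt{x^2+\tilde\varepsilon^2}+\tilde\varepsilon)$ is injective on $[0,+\infty)$. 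Only after codiagonalizing does the paper obtain $\lambda_i\in\{0,\lambda_i^\star\}$. It does so with a scalar argument: $\lambda_i^\star\sqrt{\lambda_i^2+\tilde\varepsilon^2}-\lambda_i\sqrt{\lambda_i^\star\lambda_i+\tilde\varepsilon^2}$ and $\tilde\varepsilon(\lambda_i^\star-\lambda_i)$ have the same sign.

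Your Riccati identity $K\Sigma K+2\tilde\varepsilon K=\Sigma_\star$ is essentially the algebraic form of the Schr\"odinger system for Gaussian potentials. Paired with its self-version $S\Sigma S+2\tilde\varepsilon S=\Sigma$, it replaces both of those steps. A few lines of linear algebra give $\Sigma_\star\Sigma=\Sigma^2$, which encodes the commutation and the eigenvalue constraint at once. This needs no injectivity or sign analysis, and it gives a coordinate-free description of the critical set: $\Sigma_\star$ agrees with $\Sigma$ on the range of $\Sigma$.

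The cost of your route is deriving the Riccati identity, which is a one-line computation in the commutative algebra generated by $C$. The paper's argument avoids introducing any new identity, but pays for it with the injectivity and scalar sign steps at the end.
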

\begin{proof}
If \(G_\Sigma \Sigma G_\Sigma = (G_\Sigma \Sigma^{\frac12})(G_\Sigma\Sigma^{\frac12})^T = 0\), we have \(G_\Sigma\Sigma^{\frac12} = \Sigma^{\frac12}G_\Sigma  = 0\) and thus

\begin{equation}\label{eq:GS=SG=0}
    G_\Sigma\Sigma = \Sigma G_\Sigma = 0.
\end{equation}

For the sake of conciseness, for \(X\in\mathbb S_+^d\) we denote \(J(X) = ((X+\tilde{\varepsilon}^2\id)^{\frac12}+\tilde\varepsilon\id)^{-1}\), this way \(G_\Sigma = \Sigma_\star^{\frac12}J(\Sigma_\star^{\frac12}\Sigma\Sigma_\star^{\frac12})\Sigma_\star^\frac12 - \Sigma J(\Sigma^2)\) and thus \eqref{eq:GS=SG=0} writes

\begin{equation}\label{eq:starsig=sigstar}
\Sigma_\star^{\frac12}J(\Sigma_\star^{\frac12}\Sigma\Sigma_\star^{\frac12})\Sigma_\star^\frac12\Sigma = \Sigma \Sigma_\star^{\frac12}J(\Sigma_\star^{\frac12}\Sigma\Sigma_\star^{\frac12})\Sigma_\star^\frac12 = \Sigma^2J(\Sigma^2).
\end{equation}

Since we always have \(J(X)X = XJ(X)\), taking \(X = \Sigma_\star^{\frac12}\Sigma\Sigma_\star^{\frac12}\) and multiplying by \(\Sigma_\star^{\frac12}\) on both sides, we get
\begin{equation*}
\Sigma_\star^{\frac12} J(\Sigma_\star^{\frac12}\Sigma\Sigma_\star^{\frac12})\Sigma_\star^{\frac12}\Sigma\Sigma_\star=\Sigma_\star\Sigma\Sigma_\star^{\frac12}J(\Sigma_\star^{\frac12}\Sigma\Sigma_\star^{\frac12})\Sigma_\star^{\frac12}.
\end{equation*}
Injecting \eqref{eq:starsig=sigstar} in the above equation, we get \(\Sigma^2J(\Sigma^2)\Sigma_\star = \Sigma_\star \Sigma^2J(\Sigma^2)\), which implies that \(\Sigma_\star\) and \(\Sigma^2J(\Sigma^2)\) can be codiagonalized, and since the latter is diagonalizable exactly in the same bases as \(\Sigma\), we deduce that \(\Sigma_\star\) and \(\Sigma\) are codiagonalisable. We thus have an orthonormal \(P\) such that \(\Sigma = P\diag((\lambda_i)_i) P^T\), \(\Sigma_\star = P\diag((\lambda^\star_i)_i) P^T \) and therefore \(G_\Sigma\Sigma = P\diag((\ell_i)_i)P^T\) with
\begin{equation*}
\ell_i = \lambda_i\left(\frac{\lambda^\star_i}{\sqrt{\lambda^\star_i\lambda_i + \tilde\varepsilon^2}+\tilde\varepsilon} - \frac{\lambda_i}{\sqrt{\lambda_i^2 + \tilde\varepsilon^2}+\tilde\varepsilon}\right).
\end{equation*}
Since \(\ell_i =0\) for all \(i\), either \(\lambda_i = 0\) or \(\left[\lambda^\star_i \sqrt{\lambda_i^2 + \tilde\varepsilon^2} - \lambda_i\sqrt{\lambda^\star_i\lambda_i + \tilde\varepsilon^2}\right]  + \tilde\varepsilon(\lambda_i^\star - \lambda_i)= 0\), and since the two terms in this sum have the same sign, this last condition is equivalent to \(\lambda_i = \lambda_i^\star\).

Conversely, if \(\Sigma\) is of the given form then the above computation shows \(G_\Sigma\Sigma = 0\) and thus that \(\Sigma\) is critical.
\end{proof}
We can now prove the asymptotic behavior announced in \cref{thm:covflow}.
\begin{proof}[Proof of \refthmitem{thm:covflow}{cv}]
Applying \cref{lem:cvtocrit} and \cref{lem:critpoints} yields the first part of the statement. Thus we have \(\Sigma_t\to \Sigma_\infty = P\diag((\lambda_i)_i)P^T\) with \(\lambda_i\in\{0,\lambda_i^\star\}\). Taking \(\Sigma_0\) to be invertible, we look for a contradiction by assuming that the set \(\mathcal I\) of indices \(i\) such that \(\lambda_i = 0\) and \(\lambda_i^\star \neq 0\) is non-empty. By the continuity of \(\Sigma \mapsto G_\Sigma\), we have that \(G_{\Sigma_t}\) converges to \(P\diag((\ell_i)_i)P^T\) with
\begin{align*}
\ell_i &= \frac{\lambda_i}{\sqrt{\lambda_i^2+\tilde\varepsilon^2}+\tilde\varepsilon} - \frac{\lambda_i^\star}{\sqrt{\lambda_i\lambda_i^\star+\tilde\varepsilon^2}+\tilde\varepsilon}\\
&= \begin{cases}
0 \text{ if } \lambda_i = \lambda_i^\star,\\
- \frac{\lambda_i^\star}{2\tilde \varepsilon} \text{ if } \lambda_i = 0 \neq \lambda_i^\star.
\end{cases}
\end{align*}
By continuity of the trace, we therefore have
\begin{equation}\label{eq:TrGtoneg}
\Tr(G_{\Sigma_t}) \to \sum_{i\in\mathcal I} - \frac{\lambda_i^\star}{2\tilde \varepsilon} < 0.
\end{equation}
Using \refthmitem{thm:covflow}{inv} we can write for a.e.~\(t\) that \(\frac{\dd}{\dd t}\log\det(\Sigma_t) = -\Tr(G_{\Sigma_t})\), which is positive for \(t\) large enough because of \eqref{eq:TrGtoneg}. It results that \((\log\det(\Sigma_t))_t\) is lower bounded, so by continuity \(\log\det(\Sigma_\infty) > -\infty\), which contradicts the fact that there are \(i\) such that \(\lambda_i = 0\). 
\end{proof}

\subsection{Study of the case where \texorpdfstring{\(\Sigma_0\) and \(\Sigma_\star\)}{Σ0 and Σ*} commute} \label{sec:equation-cv:commute}
In this section, we assume that \(\Sigma_0\) and \(\Sigma_\star\) commute and can thus be diagonalized in the same orthonormal basis \(P\), i.e.~\(\Sigma_0 = P\diag((\lambda_i^0)_i)P^T\) and \(\Sigma_\star = P\diag((\lambda_i^\star)_i)P^T\). That assumption reduces the evolution \eqref{eq:Sigmaflow} to \(d\) evolutions in \(\bR\) (for the \(d\) eigenvalues), which allows us to obtain convergence rates. This is summarized in the proposition below.
\begin{proposition}\label{prop:eigflow}
Under the assumption above, the evolution \eqref{eq:Sigmaflow} can be written \(\Sigma_t = P\diag((\lambda_i(t))_i)P^T\), where for every \(i\), \(\lambda_i(0)=\lambda_i^0\) and for a.e.~\(t\) holds
\begin{equation}\label{eq:eigflow}
\frac{\dd}{\dd t}\lambda_i(t) = 4\lambda_i(t)\left(\frac{\lambda_i^\star}{\sqrt{\lambda_i^\star\lambda_i(t)+\tilde\varepsilon^2}+\tilde\varepsilon} - \frac{\lambda_i(t)}{\sqrt{\lambda_i(t)^2+\tilde\varepsilon^2}+\tilde\varepsilon}\right).
\end{equation}
Additionally, we have the following convergence rates for the eigenvalues:
\begin{thmenum}
\item If \(\lambda^\star_i > 0\), then \(|\lambda_i(t)-\lambda_i^\star| \leq e^{-C_i^at}|\lambda_i^0 - \lambda_i^\star|\) \label{prop:eigflow:l*>0}
\item If \(\lambda_i^\star = 0\), then \(|\lambda_i(t)-\lambda_i^\star| = \lambda_i(t) \leq \frac{\lambda_i^0}{1+\lambda_i^0C_i^bt}\)\label{prop:eigflow:l*=0}
\end{thmenum}
where
\begin{align*}
C_i^a &= \frac{4\tilde\varepsilon\min(\lambda_i^\star, \lambda_i^0)}{(\sqrt{\lambda_i^\star\max(\lambda_i^0, \lambda_i^\star) + \tilde\varepsilon^2} + \tilde\varepsilon)(\sqrt{\max(\lambda_i^0, \lambda_i^\star)^2+\tilde\varepsilon^2}+\tilde\varepsilon)},\\
C_i^b &= \frac{4}{\sqrt{(\lambda_i^0)^2+\tilde\varepsilon^2}+\tilde\varepsilon}.
\end{align*}
This translates to the following estimate on the functional:
\begin{equation}\label{eq:Seps-cv-rate}
S_\varepsilon(\mu_t, \mu_\star) \leq \sum_{\lambda_i^\star >0}L_ie^{-C_i^a t}|\lambda_i^0-\lambda_i^\star| + \sum_{\lambda_i^\star  = 0}L_i\frac{\lambda_i^0}{1+\lambda_i^0C_i^bt}
\end{equation}
with \(L_i = 2(1 + \frac{\lambda_i^\star}{\tilde\varepsilon})\).
\end{proposition}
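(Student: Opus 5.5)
The plan is to reduce \eqref{eq:Sigmaflow} to scalar ODEs through a diagonal ansatz, and then to study each scalar ODE separately. We assume centered measures, as in the discussion preceding \cref{lem:cvtocrit}, since the mean evolves independently and explicitly.

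\textbf{Step 1: the diagonal ansatz.} For $\Sigma=P\diag((\lambda_i)_i)P^T$ commuting with $\Sigma_\star$, the matrices $\Sigma_\star^{1/2}\Sigma\Sigma_\star^{1/2}$ and $\Sigma^2$ are diagonal in the basis $P$. Hence, by the formula of \cref{thm:Seps-gauss},
\[
G_\Sigma=P\diag\big((g_i)_i\big)P^T,\qquad g_i=\frac{2\lambda_i}{\sqrt{\lambda_i^2+\tilde\varepsilon^2}+\tilde\varepsilon}-\frac{2\lambda_i^\star}{\sqrt{\lambda_i^\star\lambda_i+\tilde\varepsilon^2}+\tilde\varepsilon}.
\]
Then $-(G_\Sigma\Sigma+\Sigma G_\Sigma)=P\diag((-2g_i\lambda_i)_i)P^T$, which is exactly the right-hand side of \eqref{eq:eigflow}. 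The scalar vector field in \eqref{eq:eigflow} is locally Lipschitz on $[0,\infty)$, so each scalar ODE has a unique solution, and it stays nonnegative because $0$ is an equilibrium. The curve $P\diag((\lambda_i(t))_i)P^T$ therefore solves \eqref{eq:Sigmaflow}. By the uniqueness in \cref{thm:covflow}, it is the flow.

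\textbf{Step 2: case $\lambda_i^\star>0$.} Write $\dot\lambda=4\lambda\varphi(\lambda)$ with
\[
\varphi(\lambda)=\frac{\lambda^\star}{\sqrt{\lambda^\star\lambda+\tilde\varepsilon^2}+\tilde\varepsilon}-\frac{\lambda}{\sqrt{\lambda^2+\tilde\varepsilon^2}+\tilde\varepsilon}.
\]
Putting $\varphi$ over a common denominator, its numerator is
\[
\big[\lambda^\star\sqrt{\lambda^2+\tilde\varepsilon^2}-\lambda\sqrt{\lambda^\star\lambda+\tilde\varepsilon^2}\big]+\tilde\varepsilon(\lambda^\star-\lambda).
\]
As in \cref{lem:critpoints}, both terms have the sign of $\lambda^\star-\lambda$. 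Consequently $\lambda_i^\star$ is the only positive equilibrium, and $\lambda(t)$ moves monotonically toward $\lambda^\star$ without crossing it. So $\lambda(t)$ stays between $\lambda^0$ and $\lambda^\star$, and in particular $\lambda(t)\ge\min(\lambda^0,\lambda^\star)$.

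Next, bound $|\text{numerator}|\ge\tilde\varepsilon|\lambda^\star-\lambda|$. Bound the denominator from above by its value at $\lambda=\max(\lambda^0,\lambda^\star)$. This yields
\[
\frac{\dd}{\dd t}|\lambda-\lambda^\star|\le -C_i^a|\lambda-\lambda^\star|,
\]
and Gronwall gives \ref{prop:eigflow:l*>0}.

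\textbf{Step 3: case $\lambda_i^\star=0$.} Here
\[
\dot\lambda=-\frac{4\lambda^2}{\sqrt{\lambda^2+\tilde\varepsilon^2}+\tilde\varepsilon},
\]
so $\lambda$ is nonincreasing and $\lambda\le\lambda^0$. Hence $\dot\lambda\le -C_i^b\lambda^2$. Integrating $\frac{\dd}{\dd t}(1/\lambda)\ge C_i^b$ gives \ref{prop:eigflow:l*=0}. This holds trivially if $\lambda^0=0$.

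\textbf{Step 4: the functional estimate.} This is the step requiring most care. Use convexity of $\mathcal B_\varepsilon(\cdot,\Sigma_\star)$ (\cite[Proposition 6]{janati2020entropic}), together with $\mathcal B_\varepsilon(\Sigma_\star,\Sigma_\star)=0$ and $\nabla_1\mathcal B_\varepsilon=\frac12 G$. These give
\[
S_\varepsilon(\mu_t,\mu_\star)=\mathcal B_\varepsilon(\Sigma_t,\Sigma_\star)\le\tfrac12\Tr\big(G_{\Sigma_t}(\Sigma_t-\Sigma_\star)\big)=\tfrac12\sum_i g_i(t)\big(\lambda_i(t)-\lambda_i^\star\big).
\]
By the explicit form of $g_i$ (or \cref{lem:Gbounded}, applied coordinatewise), $|g_i|\le 2+\lambda_i^\star/\tilde\varepsilon\le L_i$. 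Plugging in the rates from Steps 2 and 3 gives \eqref{eq:Seps-cv-rate}, with some slack in the constant.

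The main obstacle is Step 2: proving the sign and no-crossing property, and choosing the extremal values in the denominator so that they yield exactly $C_i^a$.
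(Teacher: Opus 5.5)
Your proposal is correct. Steps 1–3 follow the paper's proof closely. You use the same diagonal reduction with the same appeal to uniqueness of the flow. For \(\lambda_i^\star>0\) you use the same sign and no-crossing argument; the one difference there is that you handle \(\lambda_i^0\ge\lambda_i^\star\) and \(\lambda_i^0\le\lambda_i^\star\) at once through \(|\lambda_i-\lambda_i^\star|\), which gives \(C_i^a\) directly, whereas the paper derives two constants and then combines them. For \(\lambda_i^\star=0\) your argument is equivalent: you integrate \(1/\lambda\) where the paper compares with the explicit solution \(u\).

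The genuine difference is Step 4.
\begin{itemize}
\item The paper shows that the closed-form eigenvalue expression of \(\mathcal B_\varepsilon\) is \(L_i\)-Lipschitz in each \(\lambda_i\), by adding up crude Lipschitz constants of its logarithmic and square-root pieces. This is elementary calculus and needs no structural property of \(\mathcal B_\varepsilon\).
\item You use the first-order convexity inequality together with the bound on \(G\). This gives the sharper factor \(\tfrac12|g_i|\le\max(1,\lambda_i^\star/(2\tilde\varepsilon))\) in place of \(L_i\).
\end{itemize}
In the commuting setting you do not even need to cite \cite{janati2020entropic}. The segment from \(\Sigma_t\) to \(\Sigma_\star\) stays diagonal in \(P\). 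On that segment \(\mathcal B_\varepsilon\) splits into scalar functions whose derivatives are \(\tfrac12 g_i\), and each \(g_i\) is nondecreasing in \(\lambda_i\) because both of its terms are.

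Your route also has room to spare, which the paper's crude Lipschitz bound does not exploit. Since \(g_i\) vanishes at \(\lambda_i=\lambda_i^\star\) and is Lipschitz on \(\bR_+\), your inequality gives \(S_\varepsilon(\mu_t,\mu_\star)\le\tfrac12\sum_i K_i|\lambda_i(t)-\lambda_i^\star|^2\), where \(K_i\) is the Lipschitz constant of \(g_i\). This yields rates \(e^{-2C_i^at}\) and \(O(t^{-2})\). That is consistent with the paper's numerical observation that the sublinear exponent is strictly greater than \(1\).
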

Note that in the evolution \eqref{eq:eigflow}, if \(\lambda_i^0 = 0\) then \(\lambda_i(t) = 0\) for all \(t\) (and \(C_i^a =0\)). Convergence to the target is therefore only possible if for all \(i\) such that \(\lambda_i^0 = 0\), we also have \(\lambda_i^\star = 0\), i.e.~if \(\ker(\Sigma_0)\subseteq\ker(\Sigma_\star)\) or equivalently that \(\supp(\mu_\star) \subseteq \supp(\mu_0)\). In that case, the estimate \eqref{eq:Seps-cv-rate} shows that convergence is exponential whenever the second term is null, i.e.~when \(\lambda_i^\star = 0\) implies \(\lambda_i^0 = 0\) or equivalently \(\supp(\mu_0) \subseteq \supp(\mu_\star)\), and sublinear otherwise. This is precisely what we announced in \refthmitem{thm:cv-criterion}{commute}.
\begin{proof}
One can differentiate \(\lambda \mapsto  4\lambda\left(\frac{\lambda_i^\star}{\sqrt{\lambda_i^\star\lambda+\tilde\varepsilon^2}+\tilde\varepsilon} - \frac{\lambda}{\sqrt{\lambda^2+\tilde\varepsilon^2}+\tilde\varepsilon}\right)\) and see that its derivative is bounded on \(\bR_+\), giving local existence of solutions to \eqref{eq:eigflow} by Cauchy-Lipschitz. One can also show that if \(\lambda_i(t) \geq \lambda_i^\star\) then \(\frac{\dd}{\dd t}\lambda_i(t) \leq 0\) and vice versa, meaning a solution is monotone and bounded between \(\lambda_i^0\) and \(\lambda_i^\star\), allowing to extend it to all non-negative \(t\). Then observe that the right-hand side of \eqref{eq:eigflow} describes exactly the diagonal coefficients of \(-(G_{\Sigma_t}\Sigma_t + \Sigma_tG_{\Sigma_t}) = -2G_{\Sigma_t}\Sigma_t\) in basis \(P\), yielding the solution by uniqueness of the flow. For the convergence rate \ref{prop:eigflow:l*>0}, if \(\lambda_i^0 \geq \lambda_i^\star\) and thus \(\lambda_i(t) \geq \lambda_i^\star\) for all \(t\), we write
\begin{equation*}
\frac{\dd}{\dd t}(\lambda_i(t)-\lambda_i^\star) = 4\lambda_i(t)\frac{\lambda_i^\star\sqrt{\lambda_i(t)^2+\tilde\varepsilon^2}-\lambda_i(t)\sqrt{\lambda_i^\star\lambda_i(t)+\tilde\varepsilon^2} + \tilde\varepsilon(\lambda_i^\star - \lambda_i(t))}{(\sqrt{\lambda_i^\star\lambda_i(t)+\tilde\varepsilon^2}+\tilde\varepsilon)(\sqrt{\lambda_i(t)^2+\tilde\varepsilon^2}+\tilde\varepsilon)}.
\end{equation*}
The numerator is bounded above by \(\tilde\varepsilon(\lambda_i^\star - \lambda_i(t))\) (which is non-positive), \(\lambda_i(t) \geq \lambda_i^\star\), and the denominator is bounded above by \((\sqrt{\lambda_i^\star\lambda_i^0+\tilde\varepsilon^2}+\tilde\varepsilon)(\sqrt{(\lambda_i^0)^2+\tilde\varepsilon^2}+\tilde\varepsilon)\). By Gronwall's lemma, we get \(\lambda_i(t)-\lambda_i^\star\leq e^{-C_it}(\lambda_i^0-\lambda_i^\star)\) with \(C_i = 4\frac{\tilde\varepsilon\lambda_i^\star}{(\sqrt{\lambda_i^\star\lambda_i^0+\tilde\varepsilon^2}+\tilde\varepsilon)(\sqrt{(\lambda_i^0)^2+\tilde\varepsilon^2}+\tilde\varepsilon)}\). With a similar reasoning when \(\lambda_i^0 \leq \lambda_i^\star\), we get \(\lambda_i^\star-\lambda_i(t)\leq e^{-C'_it}(\lambda_i^\star-\lambda_i^0)\) with \(C'_i = 4\frac{\tilde\varepsilon\lambda_i^0}{(\sqrt{(\lambda_i^\star)^2+\tilde\varepsilon^2}+\tilde\varepsilon)(\sqrt{(\lambda_i^\star)^2+\tilde\varepsilon^2}+\tilde\varepsilon)}\). Combining these two cases yields the estimate.
For \ref{prop:eigflow:l*=0}, with \(\lambda_i^\star\) the equation \eqref{eq:eigflow} simplifies to
\begin{equation*}
  \frac{\dd}{\dd t}\lambda_i(t) = - 4 \frac{\lambda_i(t)^2}{\sqrt{\lambda_i(t)^2+\tilde\varepsilon^2}+\tilde\varepsilon}
\end{equation*}
which gives that the flow decreases and thus we get \(\frac{\dd}{\dd t}\lambda_i(t) \leq -C_i^b\lambda_i(t)^2\). Writing \(u(t) = \frac{\lambda_i^0}{1+\lambda_i^0C_i^bt}\) the solution to \(u(0) = \lambda_i^0\), \(\frac{\dd}{\dd t}u(t) = -C_i^bu(t)^2\), we get \(\frac{\dd}{\dd t}(\lambda_t - u_t)\leq -C_i^b(\lambda_i(t)+u(t))(\lambda_i(t)-u(t))\) and applying Gronwall's lemma gives \(\lambda_i(t) - u(t) \leq \exp(-C_i^b\int_0^t(\lambda_i(s)+u(s))\dd s)(\lambda_i(0)-u(0)) = 0\) i.e.~\ref{prop:eigflow:l*=0} holds.

For the estimate on the functional, we rewrite \eqref{eq:Beps-eig} under our commuting assumption, yielding
\begin{align*}
\mathcal B_\varepsilon(\Sigma, \Sigma_\star) =& \tfrac\varepsilon2\sum_i \left[\log(\sqrt{4\lambda_i^\star\lambda_i+\tfrac{\varepsilon^2}{4}}+\tfrac\varepsilon2) - \tfrac12\log(\sqrt{4\lambda_i^2+\tfrac{\varepsilon^2}{4}}+\tfrac\varepsilon2)\right] \\&+ \sum_i\left[ \tfrac12\sqrt{4\lambda_i^2+\tfrac{\varepsilon^2}{4}} - \sqrt{4\lambda_i^\star\lambda_i+\tfrac{\varepsilon^2}{4}}\right] - \tfrac{1}{2}B_\varepsilon(\Sigma_\star, \Sigma_\star).
\end{align*}
Using the fact that, on \(\bR_+\), \(\tfrac\varepsilon2\log(\cdot+\tfrac\varepsilon2)\) is 1-Lipschitz, \(\sqrt{4(\cdot)^2+\tfrac{\varepsilon^2}{4}}\) is 2-Lipschitz, and \(\sqrt{4\lambda_i^\star(\cdot)+\tfrac{\varepsilon^2}4}\) is \(\frac{\lambda_i^\star}{\tilde\varepsilon}\)-Lipschitz, we get that the expression above is \(L_i\)-Lipschitz in \(\lambda_i\). Using the estimates on \(|\lambda_i(t)-\lambda_i^\star|\) and summing on \(i\), we get \eqref{eq:Seps-cv-rate}.
\end{proof}

We finish this case study with a remark about the optimality of the flow map.

\begin{remark}[Optimality of the flow map]
A continuity equation \(\mu_t = \div(\mu_tv_t)\), granted \((v_t)_t\) is smooth enough, induces a flow map, \(\Phi:[0, +\infty)\times \bR^d\to \bR^d\) defined so that \(t\mapsto x_t \coloneqq \Phi(t, x)\) is the solution of \(\dot x_t = - v_t(x_t)\) with \(x_0 = x\). The flow can then be written \(\mu_t=\Phi(t, \cdot)_\#\mu_0\). For our evolution \eqref{eq:SWGF} under the assumption of this section, since the potentials are affine and that we can diagonalize the covariances in a fixed basis, a simple computation shows that the flow map at \(t\) is given by \(\Phi(t, x) = \Sigma_t^{\frac12} \Sigma_0^{-\frac12}x\), which is precisely the Monge map between \(\mu_0\) and \(\mu_t\) \cite[Remark 2.29]{peyré2019computational}. This is an intuitive result given that the evolution behaves like in one-dimension and is monotone.

This observation remains interesting since multiple state-of-the-art generative models use such a flow map to transport a source distribution to a target (which is the long-time limit of the flow), such as diffusion models \cite{song2021scorebased} and flow matching models \cite{lipman2023flow}. A natural question from an optimal transport viewpoint is whether the limiting flow map \(\Phi(\infty, \cdot)\) solves the optimal transport problem, since this can give numerical methods to build Monge maps.
For the Fokker-Planck equation arising in Diffusion models, this has been proven true for a Gaussian source \cite[Theorem 3.1]{khrulkov2022understanding}, however the result does not hold for all initial measures \cite{lavenant2022flow}. Some works also build Monge maps from flow matching models, e.g.~\cite{kornilov2024optimal}.
\end{remark}

\section{Numerical illustrations} \label{sec:numerics}
In this section we simulate the flow numerically to observe its behavior and compare it to the theory. Firstly we expose the discretization scheme we used and prove its convergence to the flow, and then show the results of the experiments. The code is available online\footnote{\url{https://github.com/mhardion/gaussian_sinkhorn_flow}}.
\subsection{Explicit time-discretization scheme}
While in the general theory, the classical time-discretization scheme used to obtain existence (the JKO scheme) is time-implicit, for numerical purposes an explicit scheme is much simpler to implement. We will consider only centered Gaussians since the flow of the means has a closed-form expression. Taking a Taylor expansion in \eqref{eq:SWGF}, for some time step \(\tau > 0\), integer \(k\) and a smooth function \(\varphi\) we get
\begin{align*}
\int\varphi\dd\mu_{(k+1)\tau} &= \int(\varphi -\tau\langle\nabla\varphi, \nabla (f_{\mu_{k\tau}, \mu_\star} - f_{\mu_{k\tau}})\rangle)\dd\mu_{k\tau} + o(\tau)\\
&= \int \varphi\circ(\id-\tau\nabla (f_{\mu_{k\tau}, \mu_\star} - f_{\mu_{k\tau}})) \dd\mu_{k\tau} + o(\tau)\\
&= \int\varphi \dd(\id-\tau\nabla (f_{\mu_{k\tau}, \mu_\star} - f_{\mu_{k\tau}}))_\#\mu_{k\tau} + o(\tau)
\end{align*}
which yields naturally the recursive scheme
\begin{equation}\label{eq:discr-scheme}
\mu_{k+1}^\tau = (\id - \tau \nabla (f_{\mu_k^\tau, \mu_\star} - f_{\mu_k^\tau}))_\#\mu_k^\tau.
\end{equation}
Whenever \(\mu_k^\tau\) is Gaussian, since \(\nabla (f_{\mu_k^\tau, \mu_\star} - f_{\mu_k^\tau})\) is affine, \(\mu_{k+1}^\tau\) will also be in \(\mathcal{G} \), and thus all iterations will stay in \(\mathcal G\) if \(\mu_0\) is Gaussian.
Note that the scheme \eqref{eq:discr-scheme} can be rephrased over the covariances as
\begin{equation}\label{eq:discr-scheme-cov}
\Sigma_{k+1}^\tau = (\id - \tau G_{\Sigma_k^\tau})\Sigma_k^\tau(\id - \tau G_{\Sigma_k^\tau}),
\end{equation}
yielding a straightforward implementation in closed form. We now show that this discrete scheme (interpolated appropriately) converges to the flow when \( \tau \to 0 \).
\begin{lemma}\label{lem:discr-scheme}
Let \(\mu_0\in\mathcal G\). For \(\tau > 0\), consider the scheme \eqref{eq:discr-scheme} initialized by \(\mu_0^\tau = \mu_0\). For some \(T>0\), write \((\mu_t^\tau)_{t\in [0,T]}\) the piecewise constant interpolation of this scheme up until \(k = \lfloor \frac{T}{\tau}\rfloor\). Then \((\mu_t^\tau)_{t\in[0, T]}\) converges uniformly in Wasserstein distance to the Gaussian solution \((\mu_t)_t\) of \eqref{eq:SWGF}.
\end{lemma}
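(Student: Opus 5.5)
We reduce everything to the covariance recursion \eqref{eq:discr-scheme-cov} and compare it with the ODE \eqref{eq:Sigmaflow}, i.e.\ we treat the scheme as an explicit Euler method for a locally Lipschitz ODE on a compact set. Means are handled separately: for centered measures the scheme reduces to \eqref{eq:discr-scheme-cov}. In the uncentered case the mean recursion is \(m_{k+1} = m_k - 2\tau(m_k - m_\star)\), an Euler discretization of \(\dot m = 2(m_\star - m)\) that converges uniformly on \([0,T]\) to \(m_t\). For Gaussians, \(W_2^2(\mathcal N(m,\Sigma),\mathcal N(m',\Sigma')) \leq \|m-m'\|^2 + \|\Sigma^{1/2}-\Sigma'^{1/2}\|_F^2\), and the square root is uniformly continuous on bounded subsets of \(\mathbb S^d_+\) (Hölder-\(\tfrac12\) in Frobenius norm). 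Hence it suffices to prove \(\sup_{k\leq T/\tau}\|\Sigma_k^\tau - \Sigma_{k\tau}\|\to0\), together with the modulus of continuity of \(t\mapsto\Sigma_t\) on intervals of length \(\tau\) to pass from grid points to the piecewise constant interpolation.

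Expanding \eqref{eq:discr-scheme-cov} gives
\[
\Sigma_{k+1}^\tau = \Sigma_k^\tau - \tau(G_{\Sigma_k^\tau}\Sigma_k^\tau + \Sigma_k^\tau G_{\Sigma_k^\tau}) + \tau^2 G_{\Sigma_k^\tau}\Sigma_k^\tau G_{\Sigma_k^\tau},
\]
that is, an Euler step for the vector field \(\Phi(\Sigma) = -(G_\Sigma\Sigma + \Sigma G_\Sigma)\) plus an \(O(\tau^2)\) perturbation. This perturbation is controlled because \(\|G_\Sigma\|_{\mathrm{op}}\leq C_G\coloneqq\max(2,\lambda_{\max}(\Sigma_\star)/\tilde\varepsilon)\) by \cref{lem:Gbounded}. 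The iterates stay PSD by construction.

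The key a priori estimate is uniform boundedness: \(\|\Sigma_{k+1}^\tau\|_{\mathrm{op}} \leq (1+\tau C_G)^2\|\Sigma_k^\tau\|_{\mathrm{op}}\), which yields \(\|\Sigma_k^\tau\|_{\mathrm{op}}\leq e^{2C_GT}\|\Sigma_0\|_{\mathrm{op}}\eqqcolon R\) for \(k\tau\leq T\). The exact flow \(\Sigma_t\) is also bounded, by \cref{lem:cvtocrit}, or simply by the same Grönwall estimate. So both trajectories live in the compact set \(K=\{\Sigma\in\mathbb S^d_+ : \|\Sigma\|_{\mathrm{op}}\leq R'\}\).

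The main obstacle is Lipschitz continuity of \(\Phi\) on \(K\), because \(\Sigma\mapsto G_\Sigma\) involves matrix square roots. These are not Lipschitz near singular matrices in general, but here they are applied to \(X+\tilde\varepsilon^2\id\succeq\tilde\varepsilon^2\id\). Concretely, \(J(X)=((X+\tilde\varepsilon^2\id)^{1/2}+\tilde\varepsilon\id)^{-1}\) is a smooth (indeed analytic) function of \(X\) on a neighborhood of \(\mathbb S^d_+\), since \(X\mapsto (X+\tilde\varepsilon^2\id)^{1/2}\) is analytic on \(\{X+\tilde\varepsilon^2\id\succ0\}\) and the inverse argument is \(\succeq 2\tilde\varepsilon\id\). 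Writing \(G_\Sigma = 2\Sigma J(\Sigma^2) - 2\Sigma_\star^{1/2}J(\Sigma_\star^{1/2}\Sigma\Sigma_\star^{1/2})\Sigma_\star^{1/2}\), we see that \(G\) is \(C^1\), hence Lipschitz on \(K\), and so is \(\Phi\), with some constant \(L\).

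With this in hand the standard Euler argument applies. The exact solution satisfies \(\Sigma_{(k+1)\tau}=\Sigma_{k\tau}+\tau\Phi(\Sigma_{k\tau})+O(\tau^2)\), since \(\dot\Sigma_t=\Phi(\Sigma_t)\) is Lipschitz in \(t\), being \(\Phi\) composed with a Lipschitz curve of bounded derivative. The error \(e_k=\|\Sigma_k^\tau-\Sigma_{k\tau}\|\) therefore obeys \(e_{k+1}\leq(1+L\tau)e_k+C\tau^2\) with \(e_0=0\). Discrete Grönwall then gives \(e_k\leq C\tau e^{LT}/L\to0\) uniformly for \(k\leq T/\tau\). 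Combining this with the interpolation error and the mean convergence gives uniform convergence in \(W_2\). Uniqueness of the Gaussian solution (\cref{thm:µflow}) identifies the limit as \((\mu_t)_t\).
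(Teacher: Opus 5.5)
Your proposal is correct and follows essentially the same route as the paper. You reduce to the covariance recursion \eqref{eq:discr-scheme-cov}, bound the iterates uniformly via \cref{lem:Gbounded}, and treat the scheme as an Euler-type one-step method for the field \(\Sigma\mapsto-(G_\Sigma\Sigma+\Sigma G_\Sigma)\), which is Lipschitz on bounded sets thanks to the \(\tilde\varepsilon^2\id\) shift, before passing to the piecewise constant interpolation. You are somewhat more complete than the paper: you derive the discrete Gr\"onwall error bound by hand (an explicit \(O(\tau)\) bound, where the paper cites standard numerical analysis), you treat the means, and you make the passage from covariances to \(W_2\) explicit via the coupling \((m+\Sigma^{1/2}Z,\, m'+\Sigma'^{1/2}Z)\).
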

\begin{proof}
We consider the covariances given by \eqref{eq:discr-scheme-cov}. For a fixed \(T>0\), the iterates \((\Sigma_k^\tau)_{0\leq k\leq T/\tau}\) are bounded independently of \(\tau\): indeed, by \cref{lem:Gbounded}, we get \(\|\Sigma_{k+1}^\tau\|_{\mathrm{op}} \leq (1+\tau\frac{\lambda_{\mathrm{max}}(\Sigma_\star)}{\tilde\varepsilon})^2\|\Sigma_k^\tau\|_{\mathrm{op}}\) and thus \(\|\Sigma_k^\tau\|_{\mathrm{op}} \leq (1+\tau\frac{\lambda_{\mathrm{max}}(\Sigma_\star)}{\tilde\varepsilon})^{2T/\tau}\|\Sigma_0\|_{\mathrm{op}}\) which converges to \(\exp(2T\lambda_{\mathrm{max}}(\Sigma_\star)/\tilde\varepsilon)\) as \(\tau \to 0\) and is thus bounded by a constant independent of \(\tau\) when it is small enough. Additionally, define the ``one-step method'' \(\Phi: (\Sigma, \tau) \mapsto -(G_\Sigma \Sigma + \Sigma G_\Sigma) + \tau G_\Sigma\Sigma G_\Sigma\) on \(\mathbb S_+^d \times (0, +\infty)\), so that \(\Sigma_{k+1}^\tau = \Sigma_k^\tau + \tau \Phi(\Sigma_k^\tau, \tau)\). This map can be shown to be locally Lipschitz in the second variable (thanks to the regularization allowing to stay away from the singularities in which the matrix square root and inverse are not Lipschitz), and it satisfies \(\Phi (\Sigma, 0) = -(G_\Sigma \Sigma + \Sigma G_\Sigma)\). From standard numerical analysis arguments (see e.g.~\cite[Chapter 5]{gautschi2012numerical}), we get \(\sup_{0\leq k\tau \leq T}\|\Sigma_{k\tau}-\Sigma_k^\tau\| = o(\tau)\) where \((\Sigma_t)_t\) is the solution of the evolution \eqref{eq:Sigmaflow}. Using for all \(t\in [k\tau, (k+1)\tau)\) that \(\|\Sigma_{t}-\Sigma_k^\tau\| \leq \|\Sigma_{k\tau}-\Sigma_k^\tau\| + \|\Sigma_{k\tau}-\Sigma_t\|\) and that the last term is \(o(\tau)\) because \(\dot\Sigma\) is continuous and thus bounded on \([0, T]\), we get locally uniform convergence of the piecewise constant interpolation \((\Sigma_t^\tau)_t\) to \((\Sigma_t)_t\). This gives locally uniform convergence of the measures to the limit curve in Wasserstein distance.
\end{proof}

\subsection{Qualitative behavior}
In \cref{fig:ellipses-non-singular}, we take a non-singular source and observe the behavior for a non-singular target and a singular one. It is already apparent that, as in the theory, the convergence is slower in the latter case, which will be illustrated quantitatively next section. The intuition is that the ``repulsive force'' \(\nabla f_{\mu_t}\) is stronger the more concentrated \(\mu_t\) is, so that convergence is slowed down when converging toward a Gaussian distribution with singular covariance, that is a measure fully concentrated along an axis. 
\begin{figure}[H]
\centering
\includegraphics[width=0.9\linewidth]{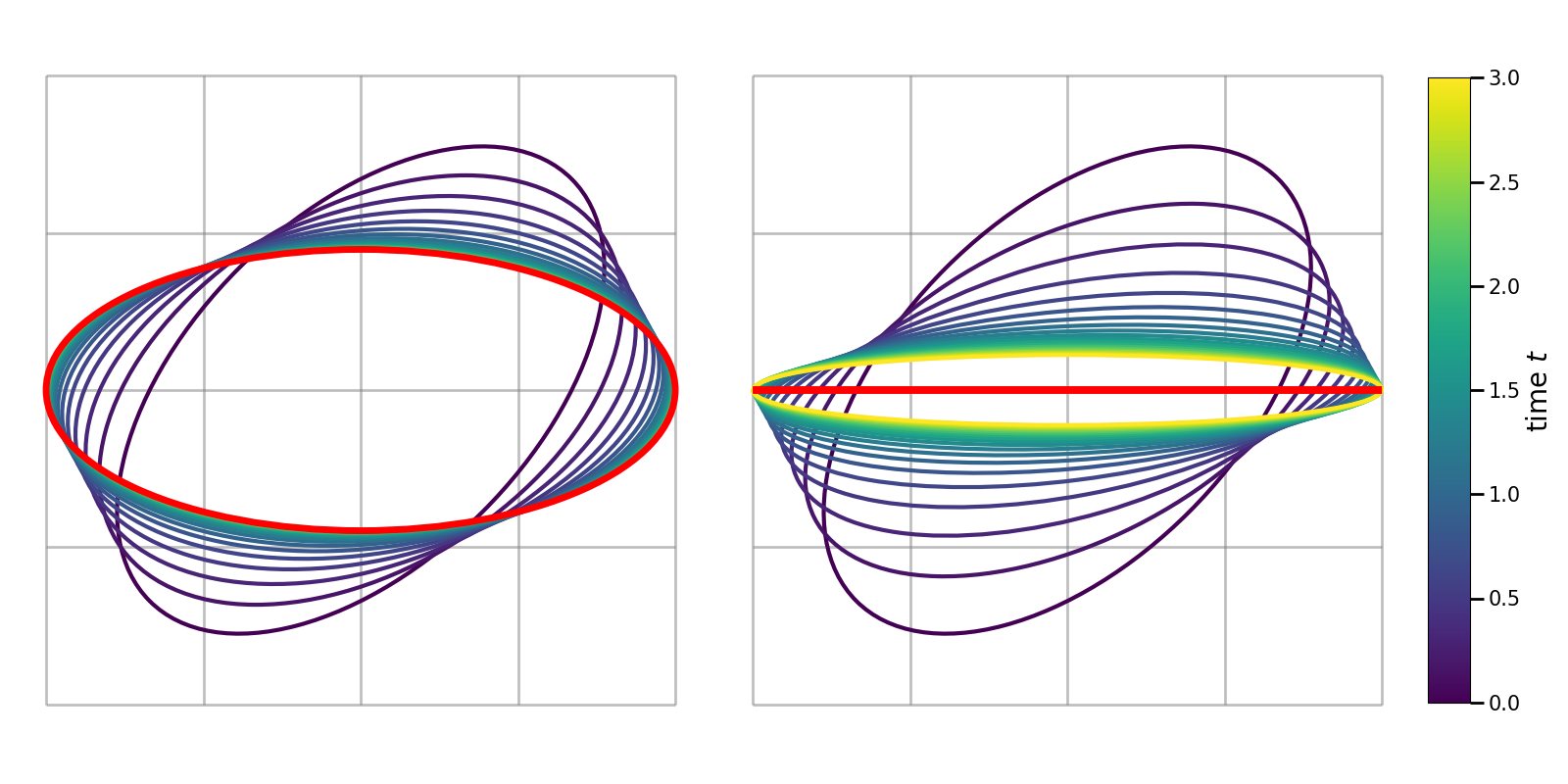} 
\caption{Covariance ellipses of the flow for a non-singular source and a non-singular (left) vs singular (right) target (red). The gray grid lines are spaced by \(\sqrt{\varepsilon}\).}
\label{fig:ellipses-non-singular}
\end{figure}
In \cref{fig:ellipses-singular}, we take a singular source and a singular target (each concentrated on a 1D axis), in two different configurations: with the two axes being orthogonal (in particular, the matrices commute), and a slight perturbation where the axes are not orthogonal but very close to the first configuration. As predicted, in the former case the flow does not converge to the target measure but rather to the Dirac \(\delta_0\), since the eigenvalues evolve independently and stay at 0 if they start at 0. In the latter case however we observe convergence to the target, which shows an instability of the flow: a slight perturbation which breaks the symmetry of the orthogonal case yields a different limit point.
\begin{figure}[H]
    \centering
    \includegraphics[width=\linewidth]{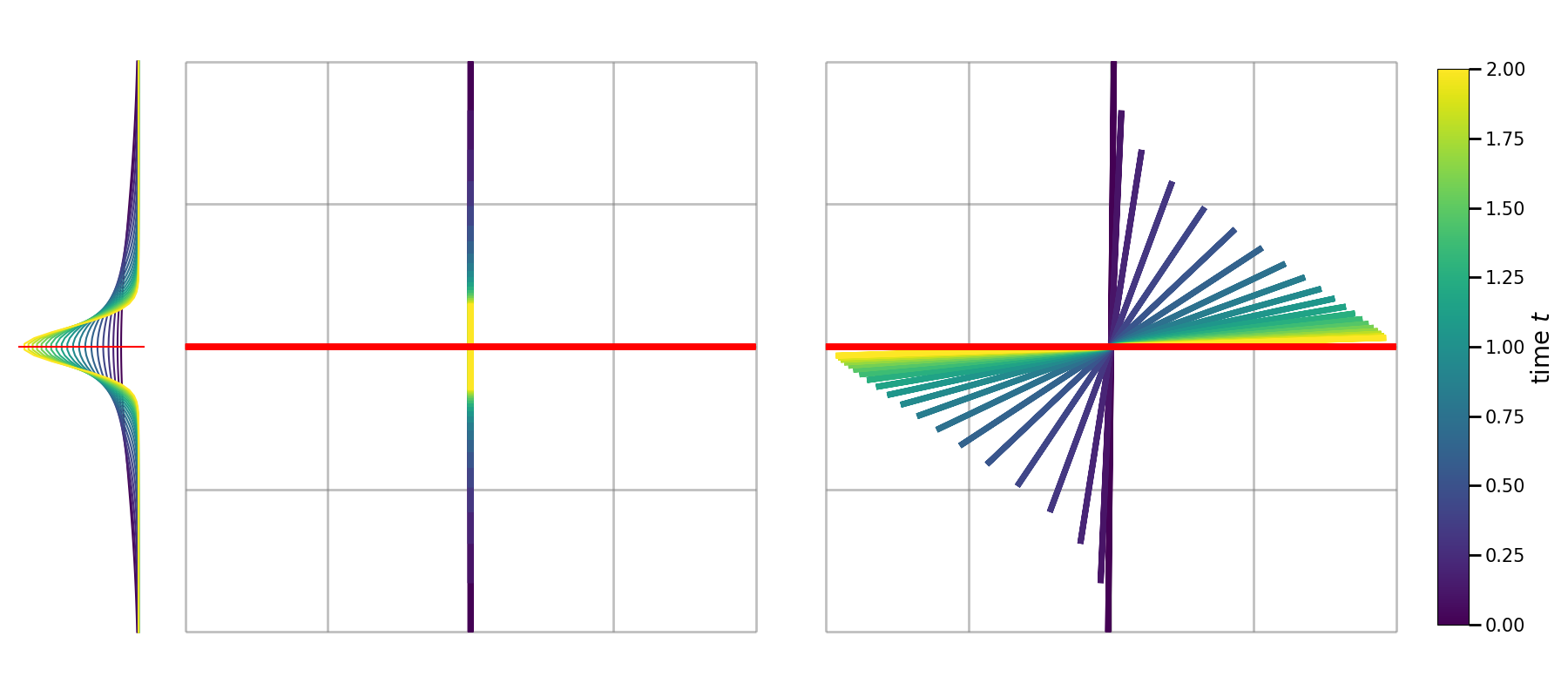}
    \caption{Covariance ellipses for singular Gaussian distributions, in an orthogonal configuration (left, with y-axis marginal for visual clarity) vs.~slightly rotated (right).}
    \label{fig:ellipses-singular}
\end{figure}
\subsection{Convergence rates}
We now move to a quantitative study of the convergence rates of the functional to 0 to empirically validate the tightness of the rates obtained in the commuting case in \cref{prop:eigflow}. In \cref{fig:cv-rate-lambda}, we compute the flow in commuting and non-commuting cases, for a target with diagonal covariance matrix and eigenvalues \( \{1,  \lambda^\star\} \) with $\lambda^\star$ decreasing from $0.5$ to $0$ (eventually making the target singular). We observe that the rates are essentially unchanged whether the source and target commute or not, and empirically observe that exponential convergence holds for non-singular measures (for large enough \(\lambda^\star\), it is a straight line in semi-log plot, for smaller but non-zero values the rate decreases in time but still ends up exponential as can be seen on the log-log plot), and sublinear convergence for \(\lambda^\star=0\) (we end up with a straight line in log-log plot), i.e. convergence in \(t^{-\alpha}\) for some \(\alpha >0\). It appears that \(\alpha\) is strictly greater than 1 in our simulations, so our estimate on the functional is likely pessimistic in terms of \(\alpha\) but seems accurate in the fact that convergence is sublinear rather than exponential.

\begin{figure}[H]
    \centering
    \includegraphics[width=\linewidth]{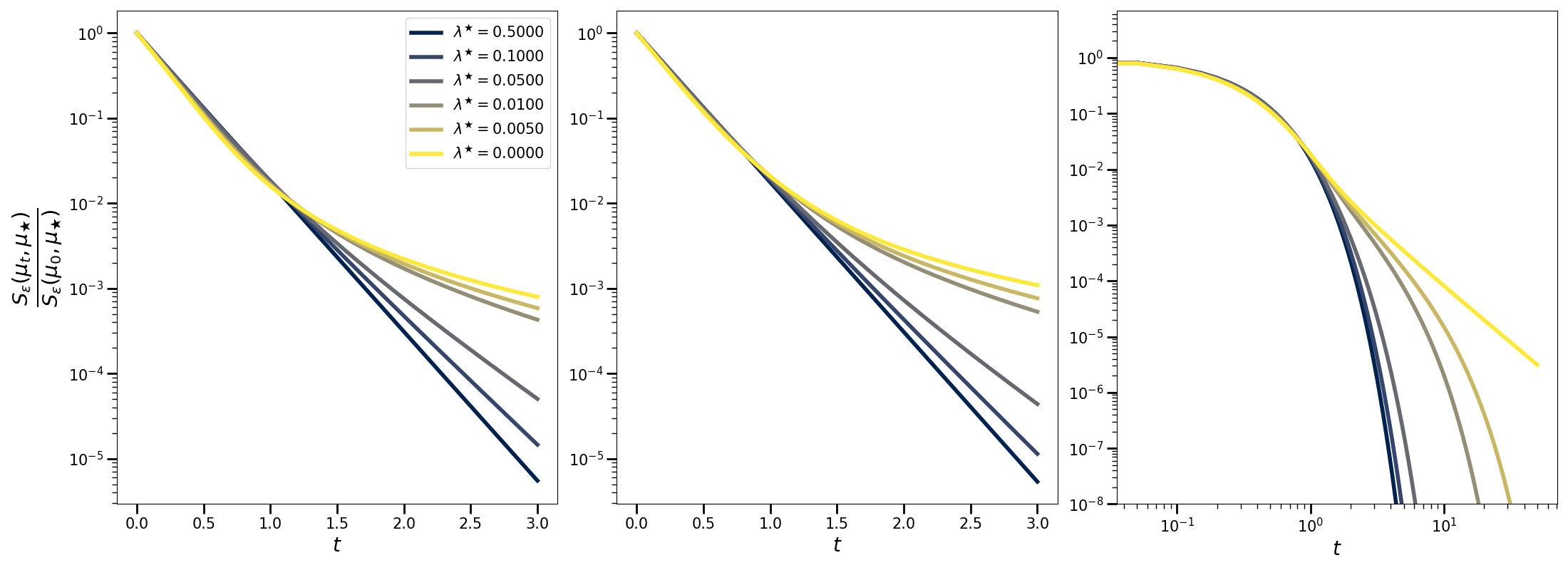}
    \caption{Values of \(S_\varepsilon(\mu_t, \mu_\star)/S_\varepsilon(\mu_0,\mu_\star)\) over time for \(\Sigma_0 =\id\) (commuting case, left) and \(\Sigma_0\) the same as in \cref{fig:ellipses-non-singular} (non-commuting case, middle), to \(\Sigma_\star =  \diag((0, \lambda^\star))\) for different values of \(\lambda^\star\), as a semi-log plot. On the right, the conditions are the same as the middle but for a longer time interval and in log-log scale.}
    \label{fig:cv-rate-lambda}
\end{figure}
Finally, we also look at the influence of the regularization parameter \(\varepsilon\) on the convergence rate. We can see \cref{fig:cv-rate-eps} that higher values of \(\varepsilon\) tend to yield slower rates, and indeed the expression of the constant \(C_i^a\) we found in \cref{prop:eigflow} goes to 0 as \(\varepsilon\to +\infty\). Note also that in the large \(\varepsilon\) regime, the Sinkhorn divergence converges toward the \emph{degenerate} Maximum Mean Discrepancy functional \( (\mu,\mu_\star) \mapsto 2 \left\| \int x \dd (\mu - \mu_\star)(x) \right\|^2 \), of which the gradient flow cannot globally converge. It is thus not surprising to observe a deterioration of the convergence rates for large \(\varepsilon\). 
When it comes to small values of \(\varepsilon\) however, our constants are pessimistic since they also go to 0 as \(\varepsilon \to 0\) whereas the simulations show that the convergence rates improve for smaller \(\varepsilon\).
\begin{figure}[H]
    \centering
    \includegraphics[width=0.5\linewidth]{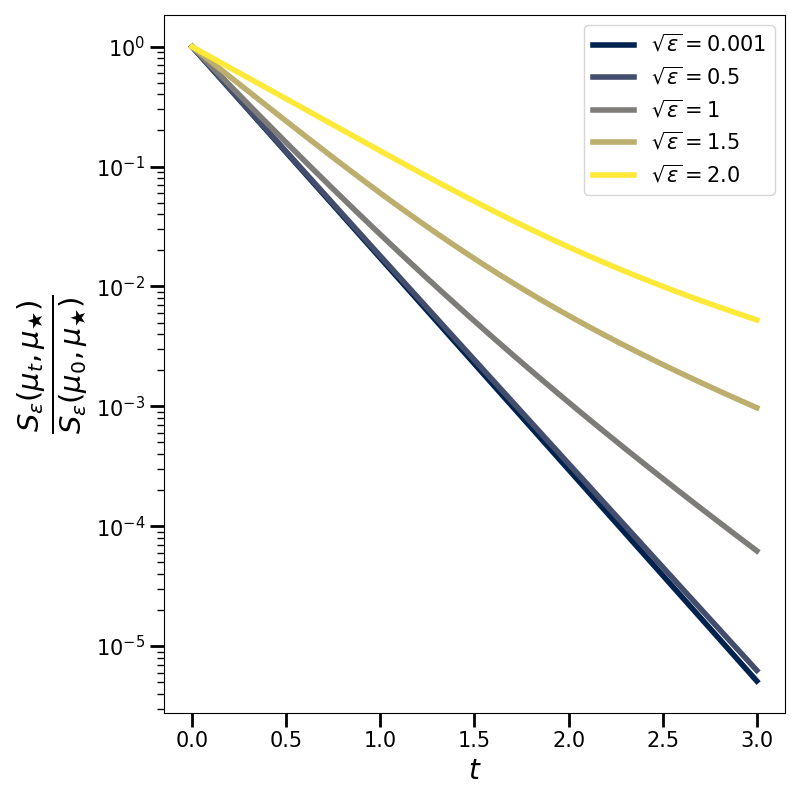}
    \caption{Values of \(S_\varepsilon(\mu_t, \mu_\star)/S_\varepsilon(\mu_0,\mu_\star)\) over time for different values of \(\varepsilon\) (same source and target as in the left of \cref{fig:ellipses-non-singular}).}
    \label{fig:cv-rate-eps}
\end{figure}

\section{Conclusion and perspectives}
In this paper, we have investigated the Wasserstein gradient flow of the Sinkhorn divergence to a target Gaussian measure, when the source is also Gaussian. After proving the well-posedness of this flow, we have shown that there are cases where convergence to the target holds, and cases where the limit is different from the target. We also derived convergence rates for the functional in the case where the source and target covariances commute, which are slower when the target is ``more singular'' than the source. These rates still seem to hold for non-commuting matrices as illustrated by numerical experiments.

We hope that this work can be a first stepping stone toward a convergence criterion in more general settings. We conjecture that such a criterion could be related to the existence and uniqueness of Monge maps between the source and the target distributions. Indeed, so far we only see two possible configurations when convergence is obstructed, in both cases because of the impossibility of the flow to split mass due to the regularity of the Schrödinger potentials. The first one is when the source is ``more singular'' than the target, so that there is no map transporting the source to the target, and in particular since the flow is regular enough to induce a map it cannot transport the source to the target either. The second case is when there is some symmetry (as in \cref{fig:ellipses-singular}, left), as intuitively the mass is  being ``pulled'' equally on all sides, but not enough to split, preventing it from converging to the target and instead making it go toward an unstable point. Such a symmetry would correspond to the non-uniqueness of Monge maps. 

We also expect that some of our theoretical results and numerical observations hold in a broader setting, i.e.~it seems plausible that global convergence of the flow holds when the source admits a density with respect to the Lebesgue measure on \( \bR^d \), but that convergence rates drop when the target measure has no density.

Our analysis also heavily relies on the use of the quadratic cost on $\bR^d$, which has the drawback of yielding a degenerate functional when $\varepsilon \to +\infty$. Studying the Wasserstein gradient flow of Sinkhorn divergences for other ground costs might also be of interest, especially when relying on large values of $\varepsilon$ (which are amenable in computations).

\section*{Acknowledgements}
This work was funded by l'Agence Nationale de la Recherche (ANR) under grant ANR-24-CE23-7711 (TheATRE).

\printbibliography
\clearpage
\appendix
\section{Pseudo-Riemannian geometry of the Wasserstein space}\label[appendix]{sec:pseudoriemW2}
This section gives a brief overview of how the Wasserstein space can be endowed with a natural Riemannian-like differential structure, see also \cite{otto2001geometry}, \cite[Chapter 8]{ambrosio2008gradient}, \cite[Section 7]{ambrosio2013users}. The first consideration is that absolutely continuous curves in this space are characterized as solutions of a continuity equation \(\dot \mu_t + \div(\mu_t v_t) = 0\) with some \(v_t \in L^2_{\mu_t}(\bR^d)\) for a.e.~\(t\) \cite[Theorem 8.3.1]{ambrosio2008gradient}. 

Thus a ``perturbation'' of \(\mu\) in the Wasserstein space, that is a tangent vector at \(\mu\), can be identified with a vector field \(v\in L^2_{\mu}(\bR^d)\). To be more precise, there may be multiple tangent vector field curves \((v_t)_t\) describing the same evolution of mass, so the tangent vector is taken to have minimal \(L_\mu^2(\bR^d)\)-norm, which can be characterized as being the gradient of a \(\mathcal C^\infty_c(\bR^d)\) function. Additionally, consider the celebrated Benamou-Brenier formula \cite{benamou2000computational}, which writes
\begin{equation*}
W_2^2(\mu, \nu) = \inf_{(\mu_t)_t, \:(v_t)_t}\int_0^1\|v_t\|^2_{L^2_{\mu_t}(\bR^d)}\dd t
\end{equation*}
where the infimum is taken along curves \((\mu_t)_{t\in[0, 1]}, \:(v_t)_{t\in[0,1]}\) satisfying the continuity equation \(\dot\mu_t = \div(\mu_tv_t)\) with \(\mu_0 = \mu\), \(\mu_1=\nu\).
Comparing this formula with the definition of the geodesic distance on a Riemannian manifold \((\mathcal M, \mathsf g)\), i.e.
\begin{equation*}
\mathsf d(x, y)^2 = \inf_{(\gamma_t)_t} \int_0^1 \mathsf g_{\gamma_t}(\dot\gamma_t, \dot\gamma_t)\dd t
\end{equation*}
under the constraints \(\gamma_0 = x, \: \gamma_1=y\), we see that the Wasserstein space is a geodesic space akin to a Riemannian manifold with metric tensor given by the inner product in \(L^2_{\mu}(\bR^d)\) (this can also be seen with a Taylor expansion of the Wasserstein distance \cite[Proposition 8.5.6]{ambrosio2008gradient}).

If we restrict ourselves for a moment to measures \(\mu\) and \(\nu\) with Lebesgue density, so that we always have a unique Monge map \(\mathcal T\) (that is, a map such that \((\id, \mathcal T)_\#\mu \in \Pi_{\mathrm o}(\mu, \nu)\)), we can write the geodesic between these two measures as \(\mu_t = (\id + t(\mathcal T-\id))_\#\mu\), which can be shown to have a constant tangent vector field \(v_t = \mathcal T - \id\). So \(\mathcal T - \id\) is the displacement vector from \(\mu\) to \(\nu\), where in a Hilbert space \(\mathcal H\), \(y-x\) would be the displacement vector from \(x\in\mathcal H\) to \(y\in\mathcal H\). In the latter setting, the Fréchet subdifferential of a functional \(F\) at \(x\in \mathcal H\) is defined as the set of \(p\in\mathcal H\) such that
\begin{equation*}
\forall y\in\mathcal H,\: F(y) - F(x) \geq \langle p, y-x\rangle_{\mathcal H} + o(\|y-x\|_{\mathcal H}).
\end{equation*}
Therefore, in our analogy, replacing \(y-x\) by \(\mathcal T - \id\) and replacing the inner product of \(\mathcal H\) with that of the tangent space, we can define the Wasserstein subdifferential of \(F:\mP(\bR^d) \to \bR\) at \(\mu\in\mP_2(\bR^d)\) as the set of \(\xi \in L^2_{\mu}(\bR^d)\) such that
\begin{equation*}
\forall \nu\in\mP_2(\bR^d),\: F(\nu) - F(\mu) \geq \int \langle\xi, \mathcal T-\id\rangle\dd\mu + o(W_2(\mu,\nu))
\end{equation*}
where we also have \(W_2(\mu,\nu) = \|\mathcal T-\id\|_{L^2_\mu(\bR^d)}\).
Note that with \(\pi \coloneqq (\id, \mathcal T)_\#\mu\), this inequality writes as \eqref{eq:defsubdiff}, so that we can drop the absolute continuity assumption and obtain the most general definition.
\section{On Bures--Wasserstein (BW) gradient flows}\label[appendix]{sec:BWGFs}

In this subsection, we use the following notations:
\begin{itemize}
    \item For \(\Sigma \in \mathbb{S}^d_{+}\), we let \(L_\Sigma : X \mapsto X \Sigma + \Sigma X\).
    \item For \(\Sigma \in \mathbb{S}^d_{++}\), and \(X,Y\) symmetric we let \(\mathbf g_\Sigma(X,Y) \coloneqq \frac12\mathrm{tr}(X L_\Sigma^{-1}(Y))\) denote the Bures--Wasserstein metric.
    \item For a regular functional \(F : \mathbb{S}^d_+ \to \mathbb{R}\), \(\nabla F(\Sigma)\) denotes its Euclidean gradient at \(\Sigma\) and \(\nabla_{\mathrm{BW}} F(\Sigma)\) denotes its BW gradient. Namely, one has for a symmetric matrix \(X\) such that \(\Sigma+X\in\mathbb S^d_+\) that
    \begin{equation*}
        F(\Sigma + X) - F(\Sigma) = \mathrm{Tr}(\nabla F(\Sigma) X) + o(X) = \frac12\mathrm{tr}(L_\Sigma^{-1}(\nabla_{\mathrm{BW}} F (\Sigma)) X) + o(X).
    \end{equation*}
    In particular, this implies that for any \(\Sigma \in \mathbb{S}^d_{++}\)
        \begin{equation}\label{eq:relation-BW-grad-Eucl-grad}
        \nabla_{\mathrm{BW}} F(\Sigma) = 2L_\Sigma(\nabla F(\Sigma)),
    \end{equation}
    and, by density of \(\mathbb{S}^d_{++}\) in \(\mathbb{S}^d_+\) and continuity of \(\Sigma \mapsto L_\Sigma(\nabla F(\Sigma))\), the relation extends to possibly singular covariance matrices. 
\end{itemize}

We now provide the following lemma, which lifts BW gradient flows as Euclidean gradient flows through the usual identification \(\Sigma = X^T X\) for some \(X \in \bR^{d \times d}\). 
This allows us to apply standard result for Euclidean gradient flow to the non-Euclidean setting of BW gradient flows. 

\begin{lemma}\label{lem:BW-flow-cv}
    Let \((\Sigma_t)_t\) be a BW gradient flow, i.e.~a curve valued in \(\mathbb S^d_+\) satisfying \(\dot \Sigma_t = - \nabla_{\mathrm{BW}} F(\Sigma_t)\) for some \(F : \mathbb{S}^d_+ \to \mathbb{R}\), assumed sufficiently regular to ensure existence and uniqueness of such flows when initialized at \(\Sigma_0 \in \mathbb{S}^d_+\). Assume also that \((\Sigma_t)\) is bounded. 
    Let \(\tilde F : \mathbb{R}^{d \times d} \to \mathbb{R}\) be defined as \(\tilde F(X) = F(X^T X)\), and let \((X_t)_t\) be the curve defined as the solution of the ODE 
    \begin{equation*}
        \dot X_t = - 2X_t \ \nabla F(X_t^T X_t),
    \end{equation*}
    with \(X_0\) such that \(X_0^T X_0 = \Sigma_0\). 
    Then
    \begin{thmenum}
        \item For all \(t\), \(X_t^T X_t = \Sigma_t\), \label{lem:BW-flow-cv:XTX=Sig}
        \item The curve \((X_t)_t\) is the solution of the \emph{Euclidean} gradient flow \(\dot X_t = -  \nabla \tilde F(X_t)\). \label{lem:BW-flow-cv:Xt-Eucl-GF}
        \item If \(F\) (and thus \(\tilde F\)) is analytic, then \((X_t)_t\) admits a limit point \(X_\infty\) as \(t \to +\infty\), and so does \((\Sigma_t)_t\). \label{lem:BW-flow-cv:limit}
    \end{thmenum}
\end{lemma}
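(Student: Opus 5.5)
For \ref{lem:BW-flow-cv:XTX=Sig}, set \(Y_t \coloneqq X_t^TX_t\) and differentiate:
\begin{equation*}
\dot Y_t = \dot X_t^TX_t + X_t^T\dot X_t = -2\nabla F(Y_t)Y_t - 2Y_t\nabla F(Y_t) = -2L_{Y_t}(\nabla F(Y_t)) = -\nabla_{\mathrm{BW}}F(Y_t).
\end{equation*}
This uses the symmetry of \(\nabla F\) and the relation \eqref{eq:relation-BW-grad-Eucl-grad}, which extends to singular matrices. So \(Y_t\) solves the same BW flow as \(\Sigma_t\) from \(Y_0 = \Sigma_0\), and the assumed uniqueness of BW flows gives \(Y_t = \Sigma_t\). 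The ODE for \(X_t\) has a locally Lipschitz right-hand side whenever \(\nabla F\) is locally Lipschitz. It also has no blow-up, since \(\|X_t\|^2 = \Tr(\Sigma_t)\) stays bounded, so \(X_t\) is defined for all \(t\).

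For \ref{lem:BW-flow-cv:Xt-Eucl-GF}, compute the Euclidean gradient of \(\tilde F\) by the chain rule:
\begin{equation*}
\tilde F(X+H) - \tilde F(X) = \Tr\bigl(\nabla F(X^TX)(X^TH + H^TX)\bigr) + o(H) = \Tr\bigl((2X\nabla F(X^TX))^T H\bigr) + o(H).
\end{equation*}
Here we again use the symmetry of \(\nabla F\). Hence \(\nabla\tilde F(X) = 2X\nabla F(X^TX)\), and the defining ODE of \(X_t\) is exactly \(\dot X_t = -\nabla\tilde F(X_t)\).

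For \ref{lem:BW-flow-cv:limit}, we invoke the Łojasiewicz theorem for gradient flows of real-analytic functions on \(\bR^{d\times d}\): a bounded trajectory of \(\dot X = -\nabla\tilde F(X)\) has finite length and converges to a single critical point. The trajectory is bounded because \(\|X_t\|_F^2 = \Tr(\Sigma_t)\) and \((\Sigma_t)_t\) is bounded by assumption. So \(X_t \to X_\infty\), and by continuity \(\Sigma_t = X_t^TX_t \to X_\infty^TX_\infty\).

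The main obstacle is analyticity. The statement only requires \(F\) to be analytic, but in our application \(F = \mathcal B_\varepsilon(\cdot,\Sigma_\star)\) contains matrix square roots such as \((\Sigma^2 + \tilde\varepsilon^2\id)^{1/2}\) and \((\Sigma_\star^{1/2}\Sigma\Sigma_\star^{1/2} + \tilde\varepsilon^2\id)^{1/2}\). These are analytic because their arguments stay in a neighbourhood of \(\mathbb S^d_{++}\), bounded below by \(\tilde\varepsilon^2\id\). What needs checking is that \(F\) extends analytically to an open neighbourhood of \(\mathbb S^d_+\) in the space of symmetric matrices, so that \(\tilde F\) is analytic on all of \(\bR^{d\times d}\) and the Łojasiewicz inequality applies near the limit set.
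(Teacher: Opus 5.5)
Your proposal is correct and follows the paper's proof essentially step for step: you differentiate \(X_t^TX_t\) and invoke uniqueness of the BW flow, compute \(\nabla\tilde F(X) = 2X\nabla F(X^TX)\) by the chain rule, and apply the Lojasiewicz-type convergence theorem for bounded trajectories of analytic gradient flows (the paper cites Absil et al.), with boundedness coming from \(\|X_t\|_F^2 = \Tr(\Sigma_t)\). Your extra remarks on global existence of \(X_t\) and on analytically extending \(F = \mathcal B_\varepsilon(\cdot,\Sigma_\star)\) to a neighbourhood of \(\mathbb S^d_+\) are sound, and they make explicit points the paper leaves implicit.
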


\begin{proof} 
    Observe that
    \begin{equation*}
        \frac{\dd}{\dd t} (X_t^T X_t) = - 2(\nabla F(X_t^T X_t) X_t^T X_t + X_t^T X_t \nabla F(X_t^T X_t)) = - \nabla_{\mathrm{BW}} F(X_t^T X_t)
    \end{equation*}
    thanks to \eqref{eq:relation-BW-grad-Eucl-grad}. Therefore, \((X_t^T X_t)_t\) is a BW gradient flow of \(F\) with initial condition \(\Sigma_0\), which is assumed to be unique, yielding \(X_t^T X_t = \Sigma_t\) for all \(t\), proving \ref{lem:BW-flow-cv:XTX=Sig}.

    For \ref{lem:BW-flow-cv:Xt-Eucl-GF}, we can compute the (Euclidean) gradient of \(\tilde F\) by writing
    \begin{align*}
        \tilde F(X+h) - \tilde F(X) &= F((X+h)^T (X+h)) - F(X^T X) + o(h)\\
        &= \Tr(\nabla F(X^T X)(X^T h + h^T X)) + o(h)\\
        &= 2 \mathrm{Tr} ( (X \nabla F(X^T X))^T h) + o(h),
    \end{align*}
    where we used that \(\nabla F(X^T X)\) is symmetric. This proves that \(\nabla \tilde F(X) = 2X \nabla F(X^T X)\), yielding \ref{lem:BW-flow-cv:Xt-Eucl-GF}.

    As \((\Sigma_t)_t\) is assumed to be bounded, so is \((X_t)_t\) since the Frobenius norm of \(X_t\) is given by \(\mathrm{Tr}(\Sigma_t)\), and thus \ref{lem:BW-flow-cv:limit} is obtained using \cite[Theorem 2.2]{absil2005convergence}.
\end{proof}

\end{document}